\newcounter{subeqn} \renewcommand{\thesubeqn}{\theequation\alph{subeqn}}%
\newcommand{\subeqn}{%
  \refstepcounter{subeqn}
  \tag{\thesubeqn}
}
\def\cA{\mathcal{A}}
\def\cB{\mathcal{B}}
\def\cF{\mathcal{F}}
\def\cK{\mathcal{K}}
\def\cM{\mathcal{M}}
\def\cP{\mathcal{P}}
\def\cQ{\mathcal{Q}}
\def\cS{\mathcal{S}}
\def\cW{\mathcal{W}}
\def\smskip{\smallskip}
\def\texitem#1{\par\smskip\noindent\hangindent 25pt
               \hbox to 25pt {\hss #1 ~}\ignorespaces}
\def\norm#1{\|#1\|}
\newcommand{\BEAS}{\begin{eqnarray*}}
\newcommand{\EEAS}{\end{eqnarray*}}
\newcommand{\BEA}{\begin{eqnarray}}
\newcommand{\EEA}{\end{eqnarray}}
\newcommand{\BEQ}{\begin{eqnarray}}
\newcommand{\EEQ}{\end{eqnarray}}
\newcommand{\BIT}{\begin{itemize}}
\newcommand{\EIT}{\end{itemize}}
\newcommand{\BNUM}{\begin{enumerate}}
\newcommand{\ENUM}{\end{enumerate}}
\newcommand{\BA}{\begin{array}}
\newcommand{\EA}{\end{array}}
\newcommand{\reals}{\mathbb{R}}
\newcommand{\integers}{\mathbb{Z}}
\newif\ifpagenumbering
\newsavebox{\theorembox}
\newsavebox{\lemmabox}
\newsavebox{\remarkbox}
\newsavebox{\assbox}
\savebox{\theorembox}{\noindent\bf Theorem}
\savebox{\lemmabox}{\noindent\bf Lemma}
\savebox{\remarkbox}{\noindent\bf Remark}
\savebox{\assbox}{\noindent\bf Assumption}
\newtheorem{assumption}{\usebox{\assbox}}
\newtheorem{remark}{\usebox{\remarkbox}}[section]
\newtheorem{lemma}{Lemma}
\newtheorem{proof}{Proof}
\newtheorem{Theorem}{\textbf{Theorem}}
\def\norm#1{\|#1\|}
\def\norm#1{\|#1\|}
\newcommand{\seq}{\reals^{\mathbb{N}}}
\newcommand{\thickhline}{%
    \noalign {\ifnum 0=`}\fi \hrule height 1pt
    \futurelet \reserved@a \@xhline
}
\begin{document}


\title{\textsc{Convex Constrained Semialgebraic Volume Optimization: Application in Systems and Control}}

\author{\IEEEauthorblockN{Ashkan Jasour,
Constantino Lagoa}
\IEEEauthorblockA{\IEEEauthorrefmark{} School of Electrical Engineering and Computer Science,
Pennsylvania State University}

}


\IEEEtitleabstractindextext{%
\begin{abstract}

\textit{Abstract}- In this paper, we generalize the chance optimization problems and introduce \textit{constrained volume optimization} where enables us to obtain convex formulation for challenging problems in systems and control. We show that many different problems can be cast as a particular cases of this framework.
In constrained volume optimization, we aim at maximizing the volume of a semialgebraic set under some semialgebraic constraints. Building on the theory of measures and moments, a sequence of semidefinite programs are provided, whose sequence of optimal values is shown to converge to the optimal value
of the original problem. We show that different problems in the area of systems and control that are known to be nonconvex can be reformulated as special cases of this framework. Particularly, in this work, we address the problems of probabilistic control of uncertain systems as well as inner approximation of region of attraction and invariant sets of polynomial systems. Numerical examples are presented to illustrate the computational performance of the proposed approach.

\end{abstract}

\begin{IEEEkeywords}
measure and moment theory, polynomial systems, sum of squares polynomials, semialgebraic set, SDP relaxation.
\end{IEEEkeywords}}

\maketitle
\IEEEdisplaynontitleabstractindextext
\IEEEpeerreviewmaketitle



\section{Introduction}
The purpose of the proposed approach is to develop convex tractable relaxations for different
problems in the area of systems and control that are known to be "\textit{hard}". We introduce the so-called constrained
volume optimization and show that many challenging problems can be cast as a particular type of this framework. More precisely, we aim at maximizing the volume of a semialgebraic set under some semialgebraic constraints;
i.e., let $\mathcal{S}_1(a)$ and $\mathcal{S}_2(a)$ be semialgebraic sets described by set of polynomial inequalities as follows

\begin{align} \label{intro_set01}
	\mathcal{S}_1(a):= \left\lbrace  x\in \chi:\  \mathcal{P}_{1j}(x,a)\geq0, j=1,\dots ,o_{1} \right\rbrace 
\end{align}
\begin{align} \label{intro_set02}
	\mathcal{S}_2(a):= \left\lbrace  x\in \chi:\  \mathcal{P}_{2j}(x,a)\geq0, j=1,\dots ,o_{2} \right\rbrace 
\end{align}
where $a$ denotes the vector of design parameters. The objective is to find a parameter vector $a$ such
that maximizes the volume of the set $\mathcal{S}_1(a)$ under the constraint $\mathcal{S}_1(a) \subseteq \mathcal{S}_2(a)$.
More precisely, we aim at solving the following problem
\begin{align} \label{intro_P01}
	\mathbf{P_{vol}^*} := \sup_{a\in\mathcal{A}}~\mbox{vol}_{\mu_x} \mathcal{S}_1(a),\\
	\hbox{s.t.}\quad & \mathcal{S}_1(a) \subseteq \mathcal{S}_2(a)\subeqn
\end{align}
where, $\mbox{vol}_{\mu_x} \mathcal{S}_1(a) = \int_{\mathcal{S}_1(a)} d\mu_x$ is the volume of the set $\mathcal{S}_1(a)$ with respect to a given measure $\mu_x$. Many well-known problems can be formulated as a constrained volume optimization problem. As an example, consider the problem of finding the maximal region of attraction (ROA) set for dynamical systems. For a given polynomial system $\dot{x}=f(x)$, maximal ROA set is the largest set of all initial states whose trajectories converge to the origin. This set can be approximated by level sets of a polynomial Lyapunov function $V(x)$. The level set of Lyapunov function $ \{ x \in \reals^n : 0 \leq V(x) \leq 1 \}$ is ROA set if it is contained in the region described by $\{ x \in \reals^n :  \dot{V}(x) \leq \epsilon \lVert x \rVert^2_2 \}$. By characterizing $V(x)$ with a finite order polynomial with unknown coefficients vector $a$ and defining $\mathcal{S}_1(a):=  \left\{ x\in\reals^n :\   0 \leq V(x,a) \leq 1 \right\} $ and $\mathcal{S}_2(a):=  \left\{ x\in\reals^n :\   \dot{V}(x,a)  \leq \epsilon \lVert x \rVert^2_2 \right\} $, the problem of finding maximal ROA set can be reformulated as a constrained volume optimization problem. 

More details are provided in Section \ref{App} where we reformulate different problems in system and control area as constrained volume optimization problems. More precisely, we address the problems of probabilistic control of uncertain systems, inner approximation of region of attraction set and invariant set of polynomial systems, and we also introduce generalized sum of squares problems. The defined constrained volume optimization problem in this work is in general non-convex optimization problem. In this work, relying on measures and moments theory as well as sum of squares theory, we provide a sequence of convex relaxations whose solution converge to the solution of the original problem.

\subsection{Previous Work}
Several approaches have been proposed to solve the problem of approximating the volume of a given set. For example, in \cite{Henrion2009} using theory of measures and moments, hierarchy of SDP problems are proposed to compute the volume of a given compact semialgebraic set. In this method, volume problem can be reformulated as a maximization problem over finite Borel measures supported on the given set and restricted by the Lebesgue measure on a simple set containing the semialgebraic set of interest. 	

As a particular case of the setting in this paper, there exist some approaches to approximate the probability of a given set. In (\cite{Calafiore2006a,Nemirovskia, Nemirovski}) a method called scenario approach to find a tractable approximation for chance constraints problems are provided where probabilistic constraints are replaced by a large number of deterministic constraints.
In (\cite{Nemirovski2007,Pintr1989}), to solve the problems with convex constraints that are affine in random vector, a method called Bernstein approximation is proposed where a convex conservative approximation of chance constraints is constructed using generating functions.  In (\cite{Bertsimas2004,Nemirovski2003}), an alternative approach is proposed where determines an upper bound on the probability of constraints. This method can only be applied to specific uncertainty structures.
In (\cite{Dabbene2009,Feng2009,Feng2011}) convex relaxations of chance constrained problems are presented by introducing the concept of polynomial kinship function to estimate an upper bound on the probability of constraint. It is shown that as the degree of the polynomial kinship function increases, solutions to the relaxed problem  converges to a solution of the original problem. Using the theory of moments, in (\cite{Lagoa2008,Feng2011}), an equivalent convex formulation is provided, where the probability is approximated by computing polynomial approximations of indicator functions.

Building on theory of measures and moments as well as theory of sum of squares polynomials, many approaches have been proposed to reformulate different problems in the area of control and system as a convex optimization problems with Linear Matrix Inequalities (LMI). The methods in (\cite{Korda2013,Korda2013a,Henrion2014,AMJasour2016}), provide hierarchy of finite dimensional
LMI relaxations to compute polynomial outer approximation of the region of attraction
set and maximum controlled invariant set for polynomial
systems. The concept of occupation measure is used to reformulate
the original problem to truncated moment problem. It is shown that the optimal value of the provided LMI
converges to the volume of desired sets and on the other hand the optimal value of the dual problem in continuous
function space converges to the polynomial outer approximation of the set. This proposed method is modified in \cite{Korda2013a}
to obtain a inner approximation for region of attraction set for finite time-horizon polynomial systems. In this case, outer
approximations of complement of region of attraction set is computed. In (\cite{Majumdar,Majumdar2014a}) sum of square formulation to find a suitable Lyapunov function for dynamical system and approximation of region of attraction set is provided. In provided approach, one needs to look for SOS Lyapunov function whose negative derivative is also SOS. Also, a repetitive control design approach is provided where enables to maximize the ROA set of the system.

The purpose of this work, is to develop convex tractable relaxations for different
problems in the area of systems and control. In this paper, we introduce constrained volume optimization problem and show that many problems can be cast as a particular case of this problem.

In our previous work, we addressed the chance constrained optimization problems where one aims at maximizing the probability of a set defined by polynomial inequalities, (see \cite{Jasour2015,Jasour2012,Jasour2013,AMJasour2016}). Based on the theory of measures and moments, we developed a sequence of convex relaxations whose sequence of optimal values converge to the optimal value of the original problem. In the provided convex problem, we look for the nonnegative Borel measure with maximum possible mass on the given semialgebraic set, while simultaneously searching for an upper bound probability measure over a simple set containing the semialgebraic set and restricting the Borel measure. 

In this paper, we generalize the obtained results in chance constrained problem and introduce the so-called constrained semialgebraic volume optimization problem. This framework enables us to find a convex equivalent problem for different problems with deterministic or probabilistic nature. To develop a sequence of convex SDP problems, we use results on moments of measures, known as Lasserre's hierarchy (e.g., see \cite{Lasserre,Lasserre2010,Laurent2009,AMJasour2014}), as well as duality theory which results in sum of squares (SOS) polynomial relaxations (e.g., see \cite{Lasserre2007,Laurent2009}).




\subsection{The Sequel}
The outline of the paper is as follows. In Section II, the notation adopted in the paper and preliminary results on measure and polynomial theory are presented. In Section \ref{Formu}, we precisely define the constrained volume optimization problem with respect to semialgebraic constraints. In Section \ref{App}, some well-known nonconvex problems in system and control area are reformulated as constrained volume optimization problems. In Sections \ref{sec:Primal}, we propose equivalent convex problem and sequences of SDP relaxations to the original problem and show that the sequence of optimal solutions to SDP relaxations converges to the solutions of the original problems. In Section \ref{sec:Dual}, the problems dual to the convex problems given in Section \ref{sec:Primal} are provided. In Section \ref{sec:Res}, some numerical results are presented to illustrate the numerical performance of the proposed approach, and finally, conclusion is stated in Section \ref{sec:Con}.


\section{ Notation and Preliminary Results}\label{Notation}

\label{sec:definitions}

In this paper, building on the theory of measure and moments as well as theory of polynomials, we develop our semidefinite programs to approximate the optimal solution of the original problems. Hence, in this paper the mathematical background and some basic definitions on polynomial and measure theory as well as linear and semidefinite programming are presented.

\subsection{Polynomial Functions}
\label{sec:definitions}

Let $\mathbb{R}[x]$ be the ring of real polynomials in the variables $x \in \mathbb{R}^n$. Given $\cP\in\mathbb{R}[x]$, we represent $\cP$ as $\sum_{\alpha\in\mathbb{N}^n} p_\alpha x^\alpha$ using the standard basis $\{x^\alpha\}_{\alpha\in \mathbb{N}^n}$ of $\mathbb{R}[x]$, and $\mathbf{p}=\{p_\alpha\}_{\alpha\in\mathbb{N}^n}$ denotes the polynomial coefficients. We assume that the elements of the coefficient vector $\mathbf{p}=\{p_\alpha\}_{\alpha\in\mathbb{N}^n}$ are sorted according to grevlex order on the corresponding monomial exponent $\alpha$. Given $n$ and $d$ in $\mathbb{N}$, we define $S_{n,d} := \binom{d+n}{n}$ and $\mathbb{N} ^{\rm n}_d := \{\alpha \in \mathbb N^n : \norm{\alpha}_1 \leq d \}$. Let $\mathbb R_{\rm d}[x] \subset \mathbb R [x]$ denote the set of polynomials of degree at most $d\in \mathbb{N}$, which is indeed a vector space of dimension $S_{n,d}$. Similarly to $\cP\in\mathbb{R}[x]$, given $\cP\in\mathbb R_{\rm d}[x]$, $\mathbf{p}=\{p_\alpha\}_{\alpha\in\mathbb{N}^{\rm n}_d}$ is sorted such that $\mathbb{N} ^{\rm n}_d\ni\mathbf{0} = \alpha ^{(1)} <_g \ldots <_g \alpha ^{(S_{n,d})}$, where $S_{n,d}$ is the number of components in $\mathbf{p}$. 

Now, consider the following definitions on polynomials.

\textbf{Sum of Squares Polynomials:} Let $\mathbb{S}^2[x] \subset \mathbb R [x]$ be the set of sum of squares (SOS) polynomials. Polynomial  $s:\reals^n\rightarrow\reals$ is an SOS polynomial if it can be written as a sum of \emph{finitely} many squared polynomials, i.e., $s(x)= \sum_{j=1}^{\ell} h_j(x)^2$ for some $\ell<\infty$ and $h_j\in\reals[x]$ for $1\leq j\leq \ell$, (\cite{Laurent2009,Parrilo2003}). 

\textbf{Quadratic Module}: For a given set of polynomials $\mathcal{P}_j(x) \in \reals[x], j=1,\dots , \ell$, the quadratic module generated by these polynomials is denoted by $\mathcal{QM}(\mathcal{P}_1, \dots, \mathcal{P}_{\ell})\subset \reals[x]$ and defined as (\cite{Lasserre2010,Laurent2009})
\begin{equation}\label{sec1:def1}
\mathcal{QM}(\mathcal{P}_1, \dots, \mathcal{P}_{\ell}):= s_0(x) + \sum_{j=1}^{\ell}s_j(x)\mathcal{P}_j,\ \{s_j\}_{j=0}^{\ell} \subset \mathbb{S}^2[x]
\end{equation}

\textbf{Putinar's property:} A closed semialgebraic set $\cK = \{ x\in \mathbb{R}^n: \mathcal{P}_j(x)\geq0,\ j=1,2,\dots ,\ell\ \}$ defined by polynomials $\mathcal{P}_j\in \mathbb R [x]$ satisfies \emph{Putinar's property}~\cite{putinar1993positive} if there exists $\mathcal{U}\in \mathbb R [x]$ such that $\lbrace x:  \mathcal{U}(x) \geq 0 \rbrace $ is compact and $\mathcal{U} = s_0 + \sum_{j=1}^{\ell} s_j\mathcal{P}_j $ for some SOS polynomials $\{s_j\}_{j=0}^\ell \subset \mathbb{S}^2[x]$ -- see~ \cite{putinar1993positive,Lasserre2010,Laraki2012}. Putinar's property holds if the level set $\lbrace x: \mathcal{P}_j(x) \geq 0 \rbrace$ is compact for some $j$, or if all $ \mathcal{P}_j $
are affine and $\cK $ is compact - see~\cite{Laraki2012}. 
Putinar's property is not a geometric property of the semi-algebraic set $\cK$, but rather an algebraic property related to the representation of the set by its defining polynomials. Hence, if there exits $M>0$ such that the polynomial $\cP_{\ell+1}(x):= M- \Vert x \Vert^2 \geq 0$ for all $x\in\cK$, then the \textit{new representation} of the set $\cK = \{ x\in \mathbb{R}^n: \mathcal{P}_j(x)\geq0,\ j=1,2,\dots ,\ell+1\ \}$ satisfies Putinar's property, \cite{Jasour2015}.


\subsection{ Measures and Moments}


Let $\cM(\chi)$ be the space of finite Borel measures and $\cM_+(\chi)$ be the cone of finite nonnegative Borel measures $\mu$ such that $supp(\mu)\subset\chi$, where $supp(\mu)$ denotes the support of the measure $\mu$; i.e., the smallest closed set that contains all measurable sets with strictly positive $\mu$ measure. Also, let $C\subset\reals^n$, $\Sigma(C)$ denotes the Borel $\sigma$-algebra over $C$. Given two measures $\mu_1$ and $\mu_2$ on a Borel $\sigma $-algebra $\Sigma $, the notation $\mu_1 \preccurlyeq \mu _2$ means  $\mu _1(S)\le \mu_2(S)$ for any set $S \in \Sigma $. Moreover, if $ \mu_1$ and $ \mu_2$ are both measures on Borel $\sigma$-algebras $\Sigma_1$ and $\Sigma_2$, respectively, then $\mu =\mu_1 \times \mu_2$ denotes the product measure satisfying $ \mu(S_1 \times S_2)=\mu_1 (S_1) \mu_2(S_2)$ for any measurable sets $S_1\in \Sigma_1$, $S_2 \in \Sigma_2$ \cite{Henrion2009}.

Let $\seq$ denote the vector space of real sequences. Given $\mathbf{y}=\{y_\alpha\}_{\alpha\in\mathbb{N}^n}\subset\seq$, 
let $L_\mathbf{y}:\reals[x]\rightarrow\reals$ be a linear map defined as (\cite{Lasserre,Lasserre2010})
\begin{small}
	\begin{equation}
	\label{eq:lin_map}
	\cP \quad \mapsto \quad L_\mathbf{y}(\cP)=\sum_{\alpha\in\mathbb{N}^n}p_\alpha y_\alpha, \quad \hbox{where} \quad \cP(x)=\sum_{\alpha\in\mathbb{N}^n} p_\alpha x^\alpha
	\end{equation}
\end{small}
A sequence $\mathbf y = \{ y_ \alpha \}_{\alpha\in\mathbb{N}^n}\in\seq$
is said to have a \emph{representing measure}, if there exists a finite Borel measure $\mu$ on $\reals^n$ such that $y_{\alpha } = \int{x^{\alpha} d\mu }$ for every $\alpha \in \mathbb N ^n$ -- see~(\cite{Lasserre,Lasserre2010}). In this case, $ \mathbf y $ is called the moment sequence of the measure $\mu $.  

Given two square symmetric matrices $A$ and $B$, the notation $A \succcurlyeq 0$ denotes that $A$ is positive semidefinite, and $A\succcurlyeq B$ stands for $A-B$ being positive semidefinite.

\textbf{Moment Matrix:} Given $r\geq 1$ and the sequence $\{y_\alpha\}_{\alpha\in\mathbb{N}^n}$, the moment matrix $M_r({\mathbf y})\in\reals^{S_{n,r}\times S_{n,r}}$, containing all the moments up to order $2r$, is a symmetric matrix and its $(i,j)$-th entry is defined as follows (\cite{Lasserre,Lasserre2010,AMJasour2014}):
\begin{equation}\label{momnt matirx def}
M_r ( \mathbf y )(i,j):= L_{\mathbf y}\left(x^{\alpha^{(i)}+\alpha^{(j)}}\right)=y_{\alpha^{(i)}+\alpha^{(j)}}\ \ \
\end{equation}
where  $1 \leq i,j \leq S_{n,r}$, $\mathbb{N} ^{\rm n}_r\ni\mathbf{0} = \alpha ^{(1)} <_g \ldots <_g \alpha ^{(S_{n,2r})}$ and $S_{n,2r}$ is the number of moments in $\mathbb{R}^n$ up to order $2r$. Let $\cB_r^T=\left[x^{\alpha^{(1)}},\ldots, x^{\alpha^{(S_{n,r})}}\right]^T$ denote the vector comprised of the monomial basis of $\mathbb R_{\rm r}[x]$. Note that the moment matrix can be written as $M_r({\mathbf y}) = L_\mathbf{y}\left(\cB_r \cB_r^T\right)$; here, the linear map $L_\mathbf{y}$ operates componentwise on the matrix of polynomials, $\cB_r \cB_r^T$. For instance, let $r=2$ and $n=2$; the moment matrix containing moments up to order $2r$ is given as
\begin{equation} \label{moment matrix exa}
M_2\left({\mathbf y}\right)=\left[ \begin{array}{c}

\begin{array}{ccc} y_{00} \ | & y_{10} & y_{01}| \end{array}
\begin{array}{ccc} y_{20} & y_{11} & y_{02} \end{array}
\\
\begin{array}{ccc} - & - & - \end{array}
\ \ \ \  \begin{array}{ccc} - & - & - \end{array}
\\

\begin{array}{ccc} y_{10}\ | & y_{20} & y_{11}| \end{array}
\ \begin{array}{ccc} y_{30} & y_{21} & y_{12} \end{array}
\\

\begin{array}{ccc} y_{01}\ | & y_{11} & y_{02}| \end{array}
\ \begin{array}{ccc} y_{21} & y_{12} & y_{03} \end{array}
\\

\begin{array}{ccc} - & - & - \end{array}
\ \ \ \ \  \begin{array}{ccc} - & - & - \end{array}
\\

\begin{array}{ccc} y_{20}\ | & y_{30} & y_{21}| \end{array}
\ \begin{array}{ccc} y_{40} & y_{31} & y_{22} \end{array}
\\

\begin{array}{ccc} y_{11}\ | & y_{21} & y_{12}| \end{array}
\ \begin{array}{ccc} y_{31} & y_{22} & y_{13} \end{array}
\\

\begin{array}{ccc}y_{02}\ | & y_{12} & y_{03}| \end{array}
\ \begin{array}{ccc} y_{22} & y_{13} & y_{04} \end{array}

\end{array}
\right]
\end{equation}

\textbf{Localizing Matrix:} Given a polynomial $\mathcal{P} \in \mathbb R [x]$, let $ \mathbf p = \{ p_{\gamma }\}_{\gamma\in\mathbb{N}^n}$ be its coefficient sequence in standard monomial basis, i.e., $\cP(x)=\sum_{\alpha\in\mathbb{N}^n} p_\alpha x^\alpha$, 
the $(i,j)$-th entry of the \emph{localizing matrix} $M_r(\mathbf{y};\mathcal{P})\in\reals^{S_{n,r}\times S_{n,r}}$ with respect to $\mathbf y $ and $\mathbf p$ is defined as follows (\cite{Lasserre,Lasserre2010,AMJasour2014}):
\begin{small}
	\begin{equation}\label{localization matrix def}
	M_r(\mathbf y;\mathcal{P})(i,j) := L_{\mathbf{y}}\left(\cP x^{\alpha^{(i)}+\alpha^{(j)}}\right)=\sum_{\gamma \in \mathbb N^n} p_{\gamma} y_{\gamma +\alpha^{(i)}+\alpha^{(j)}} 
	\end{equation}
\end{small} where, $1 \leq i,j \leq  S_{n,d}$. Equivalently, $M_r(\mathbf y,\mathcal{P}) = L_{\mathbf{y}}\left(\mathbf \cP \cB_r\cB_r^T\right)$, where $L_{\bf y}$ operates componentwise on $\cP \cB_r\cB_r^T$. For example, given $\mathbf{y}=\{y_\alpha\}_{\alpha\in\mathbb{N}^2}$ and the coefficient sequence $\mathbf{p}=\{p_\alpha\}_{\alpha\in\mathbb{N}^2}$ corresponding to polynomial $\cP$,
\begin{equation}
\mathcal{P}(x_1,x_2)= bx_1-cx^2_2,
\end{equation}
the localizing matrix for $r=1$ is formed as follows
\begin{equation}
M_1(\mathbf{y};\mathcal P)= \begin{small}
\left[ \begin{array}{ccc}
by_{10}-cy_{02} & by_{20}-cy_{12} & by_{11}-cy_{03} \\
by_{20}-cy_{12} & by_{30}-cy_{22} & by_{21}-cy_{13} \\
by_{11}-cy_{03} & by_{21}-cy_{13} & by_{12}-cy_{04} \end{array}
\right]
\end{small}
\end{equation}

\subsection{ Preliminary Results on Measures and Polynomials}

In this section, we state some standard results found in the literature that will be referred to later in this thesis.

\textbf{Moment Condition:} The following lemmas give necessary, and sufficient conditions for sequence of moments $\mathbf y$ to have a representing measure $\mu$ -- for details see \cite{Henrion2009,Lasserre2007,Lasserre2010, Jasour2015}.
\begin{lemma}
	\label{sec2:lem1}
	Let $\mu$ be a finite Borel measure on $\reals^n$, and $\mathbf{y}=\{y_\alpha\}_{\alpha\in\mathbb{N}^n}$ such that $y_\alpha=\int x^\alpha d\mu$ for all $\alpha\in\mathbb{N}^n$. Then $M_d(\mathbf y)\succcurlyeq 0$ for all $ d\in \mathbb N$.
\end{lemma}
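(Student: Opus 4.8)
The plan is to exhibit $M_d(\mathbf y)$ directly as a Gram-type matrix integrated against $\mu$, and then to read off positive semidefiniteness from the fact that $\mu$ is a nonnegative measure. First I would invoke the identity $M_d(\mathbf y) = L_{\mathbf y}\!\left(\cB_d \cB_d^T\right)$ recorded earlier, together with the moment representation $y_\alpha = \int x^\alpha\, d\mu$. Since the $(i,j)$-th entry is $M_d(\mathbf y)(i,j) = y_{\alpha^{(i)}+\alpha^{(j)}} = \int x^{\alpha^{(i)}+\alpha^{(j)}}\, d\mu = \int x^{\alpha^{(i)}} x^{\alpha^{(j)}}\, d\mu$, the whole matrix equals $\int \cB_d(x)\,\cB_d(x)^T\, d\mu$, with the integral understood entrywise. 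All entries are finite by the standing hypothesis that every $y_\alpha$ exists.

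Next, to verify the semidefinite inequality, I would test the quadratic form against an arbitrary vector $v \in \reals^{S_{n,d}}$. Writing $p(x) := v^T \cB_d(x) = \sum_{i} v_i\, x^{\alpha^{(i)}}$, which is a polynomial of degree at most $d$, the quadratic form collapses to a single integral:
\begin{equation}
v^T M_d(\mathbf y)\, v = \int \big(v^T \cB_d(x)\big)\big(\cB_d(x)^T v\big)\, d\mu = \int p(x)^2\, d\mu = L_{\mathbf y}\!\left(p^2\right).
\end{equation}
Because $p(x)^2 \geq 0$ pointwise and $\mu$ is a nonnegative measure, the integral is nonnegative; hence $v^T M_d(\mathbf y)\, v \geq 0$ for every $v$, which is exactly $M_d(\mathbf y) \succcurlyeq 0$. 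Since $d \in \mathbb N$ was arbitrary, the claim follows for all orders.

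There is no substantive obstacle here — the result is a direct consequence of the nonnegativity of $\mu$ — but two points deserve care. The first is the silent assumption that $\mu$ is nonnegative (i.e.\ $\mu \in \cM_+(\reals^n)$); for a signed measure the displayed integral could be negative and the conclusion would fail, so the lemma really concerns measures in the cone $\cM_+$. The second is the interchange of the finite sum defining the quadratic form with the integral, which is justified precisely because the sum is finite and each moment $y_\alpha$ is assumed to exist and be finite, so no convergence or dominated-convergence argument is required.
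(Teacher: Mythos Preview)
Your argument is correct and is precisely the standard proof of this fact: writing $M_d(\mathbf y)=\int \cB_d(x)\cB_d(x)^T\,d\mu$ and testing against $v$ to get $\int p(x)^2\,d\mu\geq 0$ is exactly how one shows moment matrices of nonnegative measures are positive semidefinite. The paper itself does not supply a proof of this lemma at all; it is listed among the preliminary results with the remark ``for details see \cite{Henrion2009,Lasserre2007,Lasserre2010,Jasour2015},'' so there is nothing to compare against beyond noting that your write-up is the textbook argument those references contain. Your caveat that nonnegativity of $\mu$ is essential is well taken and consistent with the paper's usage, where ``finite Borel measure'' is implicitly in $\cM_+$.
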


\begin{lemma}
	\label{sec2:lem2}
	Let $\mathbf y = \{y_{\alpha}\}_{\alpha\in\mathbb{N}^n}$ be a real sequence. If $M_d({\bf y}) \succcurlyeq 0$ for some $d\geq 1$, then
	\begin{equation*}
	\vert y_{\alpha} \vert \leq \max \left\{ y_0, \max_{i=1,\ldots, n} L_{\mathbf{y}}\left( x_i^{2d}\right) \right\}\quad \forall \alpha\in\mathbb{N}^n_{2d}.
	\end{equation*}
\end{lemma}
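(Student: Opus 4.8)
The plan is to bound an arbitrary $|y_\alpha|$ by the diagonal entries of the moment matrix, and then to show those diagonal entries are themselves controlled by the two ``extreme'' moments $y_0$ and $y_{2de_i}$. Throughout I write $e_i\in\mathbb{N}^n$ for the $i$-th unit multi-index, so that $L_{\mathbf y}(x_i^{2d})=y_{2de_i}$, and I set $C:=\max\{y_0,\max_i y_{2de_i}\}$; the goal is then $|y_\alpha|\le C$ for all $\alpha\in\mathbb{N}^n_{2d}$.

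First I would use only elementary positive-semidefiniteness. The rows and columns of $M_d(\mathbf y)$ are indexed by $\mathbb{N}^n_d$, with $(\gamma,\delta)$-entry $y_{\gamma+\delta}$ and diagonal entries $y_{2\gamma}$. Since $M_d(\mathbf y)\succcurlyeq 0$, every principal $2\times2$ submatrix is positive semidefinite, so $y_{2\gamma}\ge 0$ and $y_{\gamma+\delta}^2\le y_{2\gamma}y_{2\delta}$, whence
\begin{equation*}
|y_{\gamma+\delta}|\le\sqrt{y_{2\gamma}y_{2\delta}}\le\max\{y_{2\gamma},y_{2\delta}\}.
\end{equation*}
Now for any $\beta\in\mathbb{N}^n_{2d}$ one has $\norm{\beta}_1\le 2d$, so $\beta$ splits as $\beta=\gamma+\delta$ with $\gamma,\delta\in\mathbb{N}^n_d$ (distribute the units of $\beta$ into two groups each of size at most $d$). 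Hence $y_\beta$ is an entry of $M_d(\mathbf y)$ and $|y_\beta|\le\max_{\gamma\in\mathbb{N}^n_d}y_{2\gamma}$. It therefore suffices to prove $\max_{\gamma\in\mathbb{N}^n_d}y_{2\gamma}=C$.

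The inequality $C\le\max_\gamma y_{2\gamma}$ is immediate. For the reverse inequality I would show that the maximum of $y_{2\gamma}$ over the exponent simplex is attained at one of its vertices $\{0\}\cup\{de_i\}$, a discrete log-convexity statement. Assuming $C>0$ (the case $C=0$ forces all $y_{2\gamma}=0$ and is trivial), let $\gamma^*$ be a maximizer of $y_{2\gamma}$ chosen so that $\Phi(\gamma^*):=\sum_j(\gamma^*_j)^2$ is as large as possible. If $\gamma^*$ had two positive entries $\gamma^*_p\ge\gamma^*_q\ge 1$, the Cauchy--Schwarz bound applied to $\gamma=\gamma^*+e_p-e_q$ and $\delta=\gamma^*-e_p+e_q$ (so $\gamma+\delta=2\gamma^*$) would give $y_{2\gamma^*}^2\le y_{2\gamma}y_{2\delta}$; since both factors are $\le C=y_{2\gamma^*}$, equality forces $y_{2\gamma}=y_{2\delta}=C$, yet $\Phi(\gamma)=\Phi(\gamma^*)+2(\gamma^*_p-\gamma^*_q)+2>\Phi(\gamma^*)$, contradicting maximality. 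Likewise, if $\gamma^*=ke_i$ with $1\le k\le d-1$, the same bound with $\gamma=(k+1)e_i$, $\delta=(k-1)e_i$ produces a maximizer $(k+1)e_i$ of strictly larger $\Phi$, again a contradiction. Hence $\gamma^*\in\{0\}\cup\{de_i\}$ and $\max_\gamma y_{2\gamma}=y_{2\gamma^*}\le C$, closing the argument.

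The routine parts are the $2\times2$-minor estimate and the decomposition $\beta=\gamma+\delta$. The crux, and the main obstacle, is the last paragraph: turning positive semidefiniteness into the statement that the even-moment array attains its maximum at a vertex of the exponent simplex. The potential function $\Phi$ together with the exchange move (mass from a smaller to a larger pile) and the along-axis move is the device that makes the otherwise purely geometric ``a log-convex function is maximized at a vertex'' precise on the integer lattice; the care needed is in checking that each admissible move stays in $\mathbb{N}^n_d$ and strictly increases $\Phi$, and that the forced equality in Cauchy--Schwarz indeed yields two new maximizers.
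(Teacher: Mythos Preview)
The paper does not give its own proof of this lemma; it is quoted as a standard preliminary result with references to Lasserre, Laurent, and Henrion--Lasserre--Savorgnan. So there is no ``paper's proof'' to compare against beyond the argument in those references, which follows the same two-step scheme you use: bound every entry of $M_d(\mathbf y)$ by the diagonal via $2\times 2$ minors, then show the diagonal entries $y_{2\gamma}$ are maximized at a corner $\gamma\in\{0\}\cup\{de_i\}$ of the exponent simplex.

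Your argument is correct. Two small cosmetic points. First, in the exchange step you write ``both factors are $\le C=y_{2\gamma^*}$'', but at that moment the equality $C=y_{2\gamma^*}$ has not yet been established; the correct bound is simply $y_{2\gamma},y_{2\delta}\le y_{2\gamma^*}$ (because $\gamma^*$ is the maximizer), from which $y_{2\gamma^*}^2\le y_{2\gamma}y_{2\delta}\le y_{2\gamma^*}^2$ forces $y_{2\gamma}=y_{2\delta}=y_{2\gamma^*}$, and only at the end do you deduce $y_{2\gamma^*}=C$. Second, the aside ``the case $C=0$ forces all $y_{2\gamma}=0$ and is trivial'' is not needed and is, as stated, slightly circular: the exchange argument already handles $y_{2\gamma^*}=0$ (then every diagonal entry vanishes and the PSD matrix is zero), so no separate case is required. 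The potential function $\Phi(\gamma)=\sum_j\gamma_j^2$ together with the two moves (transfer a unit from a smaller to a larger pile; push one step further along an axis) is a clean way to make the discrete ``log-convexity on a simplex is maximized at a vertex'' statement precise, and the checks that each move stays in $\mathbb{N}^n_d$ and strictly increases $\Phi$ are all in order.
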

\begin{lemma}
	\label{sec2:lem3}
	If there exist a constant $c > 0$ such that $M_d(\mathbf y)\succcurlyeq 0$ and $|y_{\alpha}| \leq c$ for all $ d\in \mathbb N$ and $\alpha\in\mathbb{N}^n$, then there exists a representing measure $\mu$ with support on $[-1, 1]^n$.
\end{lemma}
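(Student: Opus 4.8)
The plan is to realize $[-1,1]^n$ as the basic semialgebraic set $K = \{x : g_i(x) := 1 - x_i^2 \ge 0,\ i = 1,\dots,n\}$ and to apply the moment-side form of Putinar's theorem on $K$ (see \cite{putinar1993positive,Lasserre2010}): if the quadratic module generated by the $g_i$ is Archimedean and $\mathbf y$ satisfies $M_d(\mathbf y) \succcurlyeq 0$ together with $M_r(\mathbf y; g_i) \succcurlyeq 0$ for every $r$ and $i$, then $\mathbf y$ admits a representing measure supported on $K$. The Archimedean condition is immediate here, since $\mathcal U := \sum_{i=1}^n (1 - x_i^2) = \sum_i g_i$ has compact level set $\{\mathcal U \ge 0\}$ (a Euclidean ball) and lies trivially in $\mathcal{QM}(g_1,\dots,g_n)$, so $K$ enjoys Putinar's property in the sense defined above. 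The hypothesis $M_d(\mathbf y) \succcurlyeq 0$ for all $d$ is given. Hence the whole proof reduces to establishing positivity of the localizing matrices $M_r(\mathbf y; 1 - x_i^2)$ for each coordinate $i$. This is exactly where the uniform bound $|y_\alpha| \le c$ must enter: in the multivariate setting $M_d(\mathbf y) \succcurlyeq 0$ alone does not force a representing measure to exist, and it is the boundedness hypothesis that rescues solvability.

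The key step is a reduction to the one-dimensional Hamburger problem. Fix a coordinate $i$ and a polynomial $h$ with coefficient vector $\mathbf h$, and set $b_m := L_{\mathbf y}(x_i^m h^2)$ for $m \ge 0$. First I would verify that the Hankel matrix $[b_{j+k}]_{j,k}$ is positive semidefinite: for any finite vector $(c_j)$ one has $\sum_{j,k} c_j c_k\, b_{j+k} = L_{\mathbf y}(q^2) \ge 0$ with $q := h \sum_j c_j x_i^j$, the nonnegativity following from $M_d(\mathbf y) \succcurlyeq 0$ applied at degree $d \ge \deg q$. Second, the sequence is uniformly bounded: writing $h^2 = \sum_\beta (h^2)_\beta\, x^\beta$ gives $|b_m| = |\sum_\beta (h^2)_\beta\, y_{\beta + m e_i}| \le c \sum_\beta |(h^2)_\beta|$, independent of $m$, by the hypothesis $|y_\alpha| \le c$. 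A positive semidefinite Hankel matrix of all orders yields, by the classical univariate Hamburger theorem, a representing measure $\nu$ on $\reals$ with $b_m = \int t^m\, d\nu$; and the uniform bound on the even moments $b_{2k}$ confines $\mathrm{supp}(\nu)$ to $[-1,1]$, since mass on $\{|t| > 1 + \epsilon\}$ would force $\int t^{2k}\, d\nu \ge (1+\epsilon)^{2k}\, \nu(\{|t| > 1 + \epsilon\}) \to \infty$.

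With $\nu$ in hand the localizing inequality is immediate:
\begin{equation*}
\mathbf h^\top M_r(\mathbf y; 1 - x_i^2)\, \mathbf h = b_0 - b_2 = \int_{-1}^{1} (1 - t^2)\, d\nu(t) \ge 0,
\end{equation*}
because $1 - t^2 \ge 0$ on $[-1,1]$. Since $h$ and $r$ were arbitrary, $M_r(\mathbf y; 1 - x_i^2) \succcurlyeq 0$ for all $r$ and all $i$, which is precisely the missing ingredient. Invoking Putinar's theorem on $K = [-1,1]^n$ then produces the desired representing measure $\mu$ with $\mathrm{supp}(\mu) \subseteq [-1,1]^n$ and $y_\alpha = \int x^\alpha\, d\mu$ for all $\alpha$. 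I expect the main obstacle to be the construction in the second paragraph: recognizing that the right way to exploit $|y_\alpha| \le c$ is not to bound $L_{\mathbf y}$ directly on $C([-1,1]^n)$, where the $\ell_1$ norm of the coefficients is not controlled by the supremum norm, but rather to slice the problem into the auxiliary one-dimensional sequences $b_m$, whose boundedness and Hankel positivity are both inherited cleanly and together certify the box constraints.
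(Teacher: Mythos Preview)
The paper does not actually prove Lemma~\ref{sec2:lem3}; it is listed among the preliminary ``Moment Condition'' lemmas and attributed to \cite{Henrion2009,Lasserre2007,Lasserre2010,Jasour2015} without argument. So there is no in-paper proof to compare against.

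Your proposal is correct and is essentially the standard route taken in those references: realize $[-1,1]^n$ via the constraints $g_i=1-x_i^2$, observe that this description satisfies Putinar's property, and reduce to Lemma~\ref{sec2:lem5} by manufacturing the missing localizing conditions $M_r(\mathbf y;1-x_i^2)\succcurlyeq 0$. The slicing device---forming, for each test polynomial $h$, the univariate sequence $b_m=L_{\mathbf y}(x_i^m h^2)$, noting its Hankel positivity from $M_d(\mathbf y)\succcurlyeq 0$ and its boundedness from $|y_\alpha|\le c$, and then invoking the one-dimensional Hamburger solution together with the growth argument to pin $\nu$ to $[-1,1]$---is exactly the mechanism that converts the uniform bound into the box constraints, and your computation $\mathbf h^\top M_r(\mathbf y;1-x_i^2)\mathbf h=b_0-b_2=\int_{-1}^1(1-t^2)\,d\nu\ge 0$ closes the loop. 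An alternative found in some of the cited literature bypasses Putinar and appeals directly to the Berg--Maserick/Berg--Christensen--Ressel theorem that a positive semidefinite and uniformly bounded multisequence on $\mathbb N^n$ is automatically a moment sequence of a measure on $[-1,1]^n$; your argument is the explicit unpacking of that fact in the Putinar framework already set up in the paper, which is entirely appropriate here.
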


\begin{lemma}	\label{sec2:lem4}
	Let $\mu$ be a Borel probability measure supported on the hyper-cube $[-1, 1]^n$. Its moment sequence $\mathbf y\in\reals^\mathbb{N}$ satisfies $\Vert \mathbf y \Vert_{\infty} \leq 1$.
\end{lemma}

Given polynomials $\mathcal{P}_j\in \mathbb R [x]$, let $\textbf{p}_j$ be its coefficient sequence in standard monomial basis for $j=1,2,\dots ,\ell$; consider the semialgebraic set $\cK$ defined as
\begin{equation}\label{preliminary result_semi algebraic set}
\cK = \{ x\in \mathbb{R}^n: \mathcal{P}_j(x)\geq0,\ j=1,2,\dots ,\ell\ \}.
\end{equation}
The following lemma gives a necessary and sufficient condition for $\mathbf y$ to have a representing measure $\mu$ supported on $\cK$ -- see \cite{Henrion2009,Lasserre2007,Lasserre2010,Lasserre}.
\begin{lemma}
	\label{sec2:lem5}
	If $\cK$ defined in \eqref{preliminary result_semi algebraic set} satisfies Putinar's property, 
	then the sequence $\mathbf y = \{ y_\alpha\}_{\alpha\in\mathbb{N}^n}$ has a \emph{representing} finite Borel measure $\mu$ on the set $\cK$, if and only if
	\begin{equation*}
	M_d(\mathbf y)\succcurlyeq 0,\quad M_d(\mathbf y;\textbf{p}_j)\succcurlyeq 0,\ \ j=1,\dots ,\ell, \hbox{ for all }  d\in \mathbb N.
	\end{equation*}
\end{lemma}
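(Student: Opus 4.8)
The plan is to prove the two implications separately, noting that necessity is a direct consequence of Lemma~\ref{sec2:lem1} together with the nonnegativity of the $\mathcal{P}_j$ on the support, while sufficiency is the substantive direction and amounts to Putinar's solution of the $\cK$-moment problem.

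For necessity, suppose $\mathbf{y}$ admits a representing measure $\mu$ with $supp(\mu)\subseteq\cK$. Then $M_d(\mathbf{y})\succcurlyeq0$ for all $d$ is exactly Lemma~\ref{sec2:lem1}. For the localizing matrices I would observe that, for any coefficient vector $\mathbf{h}$ of a polynomial $h\in\mathbb{R}_d[x]$,
\begin{equation*}
\mathbf{h}^T M_d(\mathbf{y};\mathbf{p}_j)\,\mathbf{h}=L_{\mathbf{y}}\!\left(\mathcal{P}_j\,h^2\right)=\int_{\cK}\mathcal{P}_j(x)\,h(x)^2\,d\mu\ \geq\ 0,
\end{equation*}
since $\mathcal{P}_j\geq0$ on $\cK$; hence $M_d(\mathbf{y};\mathbf{p}_j)\succcurlyeq0$ for every $j$ and every $d$.

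For sufficiency, the first step is to reinterpret the matrix conditions as positivity of the functional $L_{\mathbf{y}}$ on the quadratic module $\mathcal{QM}(\mathcal{P}_1,\dots,\mathcal{P}_{\ell})$. An arbitrary element has the form $s_0+\sum_{j}s_j\mathcal{P}_j$ with each $s_j=\sum_k h_{jk}^2\in\mathbb{S}^2[x]$, and linearity gives $L_{\mathbf{y}}\big(s_0+\sum_j s_j\mathcal{P}_j\big)=\sum_k\mathbf{h}_{0k}^T M_d(\mathbf{y})\,\mathbf{h}_{0k}+\sum_{j,k}\mathbf{h}_{jk}^T M_d(\mathbf{y};\mathbf{p}_j)\,\mathbf{h}_{jk}\geq0$, each term being nonnegative by the assumed semidefiniteness. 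Next I would use the fact that Putinar's property makes the module Archimedean: the certificate $\mathcal{U}=s_0+\sum_j s_j\mathcal{P}_j$ with $\{x:\mathcal{U}(x)\geq0\}$ compact, together with the augmented representation containing $\cP_{\ell+1}:=M-\Vert x\Vert^2$ described above, places this polynomial in $\mathcal{QM}$. This bounds the sequence uniformly via Lemma~\ref{sec2:lem2}, so that $L_{\mathbf{y}}$ is continuous in the relevant sense and the eventual measure is forced to have compact support contained in a ball.

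The final and hardest step is to pass from a functional that is merely nonnegative on the quadratic module to an actual representing measure. Here I would invoke Putinar's Positivstellensatz: under the Archimedean condition, every polynomial strictly positive on $\cK$ lies in $\mathcal{QM}(\mathcal{P}_1,\dots,\mathcal{P}_{\ell})$. Combined with the positivity of $L_{\mathbf{y}}$ on the module established above, this forces $L_{\mathbf{y}}(f)\geq0$ for every $f\in\mathbb{R}[x]$ that is nonnegative on $\cK$, and Haviland's theorem then yields a finite Borel measure $\mu$ supported on $\cK$ with $y_\alpha=\int x^\alpha\,d\mu$. I expect the genuine obstacle to be exactly this implication---the equivalence of module-positivity with the existence of the measure---since it is precisely the content of Putinar's theorem and cannot be obtained by elementary manipulation; the Archimedean hypothesis supplied by Putinar's property is indispensable, as without it the quadratic module need not capture all positive polynomials and the moment problem may fail to have a solution supported on $\cK$. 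An alternative route to the same conclusion replaces the Positivstellensatz/Haviland argument by a GNS-type construction, in which $L_{\mathbf{y}}$ defines an inner product on a quotient of $\mathbb{R}[x]$, the multiplication operators by the coordinates $x_i$ become bounded and self-adjoint thanks to the Archimedean bound, and the joint spectral measure delivered by the spectral theorem is the sought $\mu$.
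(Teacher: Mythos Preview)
The paper does not supply a proof of this lemma: it is listed among the preliminary results with the remark ``see \cite{Henrion2009,Lasserre2007,Lasserre2010,Lasserre}'' and no argument is given. So there is no paper proof to compare against; what you have written is a self-contained sketch of the standard proof of Putinar's solution to the $\cK$-moment problem, which is exactly what those references contain.

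Your outline is correct. Necessity is immediate, as you note. For sufficiency, the reduction to nonnegativity of $L_{\mathbf y}$ on $\mathcal{QM}(\mathcal P_1,\dots,\mathcal P_\ell)$ and the use of the Archimedean property are right. One small gap worth making explicit: Putinar's Positivstellensatz (Lemma~\ref{sec2:lem7}) only places \emph{strictly} positive polynomials in the quadratic module, so to conclude $L_{\mathbf y}(f)\geq 0$ for every $f$ nonnegative on $\cK$ you need the $\varepsilon$-shift argument, applying the Positivstellensatz to $f+\varepsilon$ and letting $\varepsilon\downarrow 0$ (using $L_{\mathbf y}(1)=y_{\mathbf 0}\geq 0$). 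After that, Haviland's theorem indeed delivers the representing measure supported on $\cK$. The alternative GNS route you mention is equally valid and is in fact closer to Putinar's original argument.
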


\textbf{Measure of Compact Set}: The following lemma, proven in~\cite{Henrion2009}, shows that the Borel measure of a compact set is equal to the optimal value of an infinite dimensional LP problem.
\begin{lemma}
	\label{sec2:lem6}
	Let $\Sigma$ be the Borel $\sigma$-algebra on $\reals^n$, and $ \mu_1$ be a measure on a compact set $\cB\subset\Sigma$. Then for any given $\cK\in\Sigma$ such that $\cK\subseteq  \cB$, one has
	\begin{equation*}
	\mu_1(\cK)= \int_{\cK} d\mu_1 = \sup_{\mu_2\in\cM(\cK)} \left\lbrace  \int d\mu_2 : \mu_2 \preccurlyeq \mu_1\right\rbrace,
	\end{equation*}
	where $\cM(\cK)$ is the set of finite Borel measures on $\cK$.
\end{lemma}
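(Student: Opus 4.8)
The plan is to prove the two inequalities that together force equality, and along the way to exhibit an explicit maximizer so that the supremum is in fact attained. Throughout, the compactness of $\cB$ is used only to guarantee that $\mu_1$ is a finite measure, so that $\mu_1(\cK)<\infty$ and every quantity below is well defined.

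First I would establish the easy inequality $\sup_{\mu_2}\int d\mu_2 \le \mu_1(\cK)$. Let $\mu_2\in\cM(\cK)$ be any feasible measure, i.e.\ one with $\mu_2\preccurlyeq\mu_1$. Since $\mu_2$ is supported on $\cK$, its total mass equals $\int d\mu_2 = \mu_2(\cK)$. Applying the definition of $\preccurlyeq$ to the particular measurable set $S=\cK$ gives $\mu_2(\cK)\le\mu_1(\cK)$. Taking the supremum over all feasible $\mu_2$ then yields the claimed bound.

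Next I would show this bound is achieved, which simultaneously delivers the reverse inequality and the attainment of the supremum. The natural candidate is the restriction of $\mu_1$ to $\cK$, defined by $\mu_2^\star(S):=\mu_1(S\cap\cK)$ for every $S\in\Sigma$. Because $\cK\in\Sigma$ and $\mu_1$ is a finite Borel measure, $\mu_2^\star$ is again a finite Borel measure; moreover $\mu_2^\star(\reals^n\setminus\cK)=\mu_1\big((\reals^n\setminus\cK)\cap\cK\big)=\mu_1(\emptyset)=0$, so $\mu_2^\star$ assigns all of its mass to $\cK$ and hence lies in $\cM(\cK)$. Feasibility follows from monotonicity of $\mu_1$: for any $S\in\Sigma$ one has $\mu_2^\star(S)=\mu_1(S\cap\cK)\le\mu_1(S)$, i.e.\ $\mu_2^\star\preccurlyeq\mu_1$. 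Finally $\int d\mu_2^\star=\mu_2^\star(\cK)=\mu_1(\cK\cap\cK)=\mu_1(\cK)$, so the value $\mu_1(\cK)$ is attained by a feasible measure, and combined with the upper bound this forces the supremum to equal $\mu_1(\cK)$.

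There is no genuine analytic obstacle here; the only points requiring a moment's care are purely measure-theoretic bookkeeping — verifying that the restriction $\mu_2^\star$ is a legitimate finite Borel measure concentrated on $\cK$, and checking that $\preccurlyeq$ is applied on the full $\sigma$-algebra $\Sigma$ rather than merely on subsets of $\cK$. Compactness of $\cB$ enters only through the finiteness of the masses involved, so the argument is entirely elementary.
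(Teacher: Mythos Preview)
Your proof is correct and complete: the two-step argument (upper bound via domination applied to $S=\cK$, attainment via the restriction $\mu_2^\star=\mu_1\llcorner\cK$) is exactly the standard way to establish this identity, and you have checked the measure-theoretic details carefully.

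There is nothing to compare against in the paper itself: the lemma is stated as a preliminary result and its proof is delegated to the reference~\cite{Henrion2009} rather than given here. The argument in that reference follows the same restriction idea you used, so your approach is essentially the intended one.
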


\textbf{SOS Representation: }The following lemma gives a sufficient condition for $f \in \mathbb{R}[x]$ to be nonnegative on the set $\mathcal{K}$-- see \cite{anderson1987linear,Lasserre2007,Lasserre,Lasserre2010}.
\begin{lemma}
	\label{sec2:lem7}
	Assume $\cK$ defined in \eqref{preliminary result_semi algebraic set} satisfies Putinar's property. If $\mathcal{P} \in \mathbb{R}[x]$ is strictly positive on $\cK$, then $\mathcal{P} \in \mathcal{QM}(\{ \mathcal{P}_j \}_{j=1}^{\ell})$. Hence,
	\begin{equation*}
	\mathcal{P} = s_0 + \sum_{j=1}^{\ell} s_j \mathcal{P}_j, \ s_j \in \mathbb{S}^2[x], \ j=0,...,\ell
	\end{equation*}
\end{lemma}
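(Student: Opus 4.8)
The plan is to argue by contradiction, combining a Hahn--Banach separation with the moment characterization already established in Lemma \ref{sec2:lem5}. Write $M := \mathcal{QM}(\mathcal{P}_1,\dots,\mathcal{P}_\ell)$ for the quadratic module, regarded as a convex cone in the infinite-dimensional real vector space $\mathbb{R}[x]$, and suppose for contradiction that $\mathcal{P}>0$ on $\cK$ but $\mathcal{P}\notin M$. The first step is to exploit Putinar's property to promote the constant polynomial $1$ to an order unit for $M$: by hypothesis there is $\mathcal{U}\in M$ with $\{x:\mathcal{U}(x)\geq0\}$ compact, and after adjoining the redundant constraint we may assume $N-\lVert x\rVert_2^2\in M$ for some $N>0$. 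A standard induction on degree then shows that for every $q\in\mathbb{R}[x]$ there is a $\lambda>0$ with $\lambda-q\in M$, i.e. $1$ lies in the algebraic interior of $M$. This is the decisive structural consequence of the Archimedean hypothesis, and it is exactly what makes the separation below succeed without assuming that $M$ is closed.

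Next I would separate. Since $\mathcal{P}\notin M$ while $1$ is an interior point of $M$, the Hahn--Banach theorem (applied in the finest locally convex topology on $\mathbb{R}[x]$, under which algebraic and topological interior coincide) yields a nonzero linear functional $L:\mathbb{R}[x]\to\reals$ with $L\geq0$ on $M$ and $L(\mathcal{P})\leq0$; because $1$ is interior we may normalize $L(1)=1$. Specializing $L\geq0$ on $M$ to the generators gives $L(s_0)\geq0$ and $L(s_j\mathcal{P}_j)\geq0$ for all $s_j\in\mathbb{S}^2[x]$. Setting $y_\alpha:=L(x^\alpha)$ and using the identities $c^T M_d(\mathbf{y})c=L\big((\cB_d^T c)^2\big)$ together with its localizing-matrix analogue, these inequalities are precisely $M_d(\mathbf{y})\succcurlyeq0$ and $M_d(\mathbf{y};\mathbf{p}_j)\succcurlyeq0$ for all $d\in\mathbb{N}$ and all $j$.

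Finally I would invoke Lemma \ref{sec2:lem5}: since $\cK$ satisfies Putinar's property and the moment and localizing matrices of $\mathbf{y}$ are all positive semidefinite, $\mathbf{y}$ admits a representing finite Borel measure $\mu$ supported on $\cK$, with total mass $\mu(\cK)=y_0=L(1)=1>0$. Then
\begin{equation*}
L(\mathcal{P})=\int_{\cK}\mathcal{P}\,d\mu>0,
\end{equation*}
because $\mathcal{P}>0$ on $\cK$ and $\mu$ is a nonzero nonnegative measure on $\cK$, contradicting $L(\mathcal{P})\leq0$. Hence $\mathcal{P}\in M$, which is exactly the asserted representation $\mathcal{P}=s_0+\sum_{j=1}^{\ell}s_j\mathcal{P}_j$ with $s_j\in\mathbb{S}^2[x]$.

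The step I expect to be the main obstacle is the separation together with the normalization $L(1)=1$. A generic quadratic module need not be closed, so knowing $\mathcal{P}\notin M$ does not by itself furnish a separating functional, and a careless separation could return a degenerate $L$ with $L(1)=0$ -- corresponding to mass escaping to infinity -- for which no representing measure supported on the compact set $\cK$ exists. The Archimedean consequence of Putinar's property, namely that $1$ is an interior/order unit of $M$, is precisely what rules this out: it simultaneously guarantees a nondegenerate separating functional and, through the resulting uniform bound on the $y_\alpha$, the applicability of Lemma \ref{sec2:lem5}.
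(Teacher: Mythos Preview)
The paper does not supply its own proof of Lemma~\ref{sec2:lem7}; it is stated as a standard preliminary result (Putinar's Positivstellensatz) with citations to \cite{anderson1987linear,Lasserre2007,Lasserre,Lasserre2010}. So there is nothing in the paper to compare your argument against directly.

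Your proposal follows the well-known separation/representing-measure route and is sound in outline. Two points deserve care. First, the phrase ``after adjoining the redundant constraint we may assume $N-\lVert x\rVert_2^2\in M$'' is ambiguous: adjoining a new generator changes the quadratic module, which would alter the statement being proved. What you actually need is the (nontrivial but standard) fact that Putinar's property as defined in the paper---existence of $\mathcal{U}\in M$ with $\{\mathcal{U}\geq0\}$ compact---already forces $M$ to be Archimedean, hence $N-\lVert x\rVert_2^2\in M$ for some $N$ without modifying the description of $\cK$. You should state this as a consequence rather than as an adjunction. Second, be aware of a potential circularity in invoking Lemma~\ref{sec2:lem5}: in much of the cited literature, the ``if'' direction of that moment characterization is itself derived from Putinar's Positivstellensatz. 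The circularity can be broken---once $M$ is Archimedean and $L(1)=1$, the moments $y_\alpha=L(x^\alpha)$ are uniformly bounded (cf.\ Lemmas~\ref{sec2:lem2}--\ref{sec2:lem4}), yielding a representing measure on a box, and the localizing-matrix positivity then pins the support to $\cK$ via density of polynomials---but you should make that independence explicit rather than leaning on Lemma~\ref{sec2:lem5} as a black box.
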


\textbf{Duality}: The following theorems show the relationship between measures, continuous functions and polynomials:

\textbf{i)} \textbf{Stone-Weierstrass Theorem:}
Every continuous function defined on a closed set can be uniformly approximated as closely as desired by a polynomial function, \cite{ASH1972}.

\textbf{ii)} \textbf{Riesz Representation Theorem:} Let $\mathcal{C}(\chi)$ be the Banach space of continuous functions on $\chi$ with associated norm $\| f \| := \sup_{x \in \chi} |f(x)|$ for $f \in \mathcal{C}$ and $\mathcal{C}_+(\chi) := \left\lbrace  f \in \mathcal{C} : f \geq 0 \hbox{ on } \chi \right\rbrace $ be the cone of nonnegative continuous functions. The cone of nonnegative measures is dual to the cone of nonnegative continuous functions with inner product $\langle \mu,f \rangle := \int_{\chi} fd\mu, \ \mu \in \mathcal{M}_+(\chi), \ f \in \mathcal{C}_+(\chi)$; i.e., any $\mu \in \cM_+(\chi)$ belongs to the space of all linear functional on $\mathcal{C}_+(\chi)$ - see (\cite{Royden2010}, Section 21.5, \cite{anderson1987linear, A.Barvinok2002}).

\subsection{Linear and Semidefinite Programming }
In this section preliminary results on linear program and semidefinite programs are presented.

Consider the linear programming (LP) problem in standard form
\begin{align}
\mathbf{P^*}:=&\ \max \langle x , c \rangle \label{LP}\\
&\hbox{s.t.}\quad Ax \leq b \label{LP_1}\subeqn\\
&x \geq 0 \label{LP_2}.\subeqn
\end{align}
where, $x \in \reals^n$ is variable vector, $A: \reals^n \rightarrow \reals^m $ is the linear operator, $b \in \reals^m$ are real matrices and vector.Also, $\langle x , c \rangle = c^Tx$. Based on standard results on LP \cite{anderson1987linear,A.Barvinok2002}, the dual problem of \eqref{LP} reads as
\begin{align}
\mathbf{P_{Dual}^*}:=&\ \min \langle b , y \rangle \label{LPD}\\
&\hbox{s.t.}\quad A^*y \geq c \label{LPD_1}\subeqn\\
&y \geq 0 \label{LPD_2}.\subeqn
\end{align}
where, $A^*: \mathbb{R}^{m} \rightarrow \mathbb{R}^n $ denotes the adjoint operator of $A$, i.e., $\langle A^*y,x \rangle = \langle y, Ax \rangle $. The following theorem shows the relationship of primal and dual problems.

\begin{Theorem}
	\label{sec2:Theo1}	
	\textbf{Strong Duality:} If in problem \eqref{LP}, $\langle x , c \rangle$ is finite value and the set $\left\lbrace  (Ax,\langle x,c\rangle) : x \geq 0 ) \right\rbrace $ is closed, then there is no duality gap between \eqref{LP} and \eqref{LPD}, i.e., $\mathbf{P^*}=\mathbf{P_{Dual}^*}$, (\cite{anderson1987linear}, Theorem 3.10, \cite{A.Barvinok2002}, Theorem 7.2)
\end{Theorem}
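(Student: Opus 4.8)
The plan is to establish the two inequalities $\mathbf{P^*}\le\mathbf{P_{Dual}^*}$ and $\mathbf{P_{Dual}^*}\le\mathbf{P^*}$ separately. The first (weak duality) needs no hypothesis and is elementary; the second carries all the content, and this is where finiteness of the value and the closedness assumption are used. Combining the two gives $\mathbf{P^*}=\mathbf{P_{Dual}^*}$, i.e.\ no duality gap.

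For weak duality I would take an arbitrary primal feasible $x$ (so $x\ge 0$ and $Ax\le b$) and an arbitrary dual feasible $y$ (so $y\ge 0$ and $A^*y\ge c$) and chain the order relations through the adjoint identity $\langle A^*y,x\rangle=\langle y,Ax\rangle$:
\[
\langle x,c\rangle\ \le\ \langle x,A^*y\rangle\ =\ \langle Ax,y\rangle\ \le\ \langle b,y\rangle,
\]
where the first inequality uses $x\ge 0$ together with $A^*y-c\ge 0$ and the last uses $y\ge 0$ together with $b-Ax\ge 0$. Taking the supremum over primal feasible $x$ and the infimum over dual feasible $y$ gives $\mathbf{P^*}\le\mathbf{P_{Dual}^*}$.

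For the reverse inequality I would run a separating-hyperplane argument in $\mathbb{R}^m\times\mathbb{R}$. Writing $v:=\mathbf{P^*}$, which is finite by hypothesis, consider the convex set
\[
\mathcal{C}:=\bigl\{(Ax+s,\ \langle x,c\rangle)\ :\ x\ge 0,\ s\ge 0\bigr\}\subseteq\mathbb{R}^m\times\mathbb{R},
\]
so that $(b,w)\in\mathcal{C}$ exactly encodes the existence of a primal feasible $x$ with objective value $w$, whence $v=\sup\{w:(b,w)\in\mathcal{C}\}$. Since $\mathcal{C}$ is the stated closed set $\{(Ax,\langle x,c\rangle):x\ge 0\}$ translated by the closed cone $\mathbb{R}^m_+\times\{0\}$, the closedness hypothesis yields $(b,v)\in\mathcal{C}$, i.e.\ the primal value is attained. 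For any $\epsilon>0$ the point $(b,v+\epsilon)$ then lies outside $\mathcal{C}$ and can be strictly separated: there exist $(y,\lambda)\neq 0$ and $\beta$ with $\langle y,z\rangle+\lambda w\ge\beta>\langle y,b\rangle+\lambda(v+\epsilon)$ for all $(z,w)\in\mathcal{C}$. Testing this against the recession cone $s\ge 0$ forces $y\ge 0$; testing against $x\ge 0$ forces $A^*y+\lambda c\ge 0$; and after I show $\lambda<0$ I may normalize $\lambda=-1$ to recover the dual constraint $A^*y\ge c$. Evaluating the separation at $(0,0)\in\mathcal{C}$ gives $\beta\le 0$, so the strict inequality yields $\langle b,y\rangle<v+\epsilon$; since $y$ is dual feasible, $\mathbf{P_{Dual}^*}\le\langle b,y\rangle<v+\epsilon$, and letting $\epsilon\downarrow 0$ delivers $\mathbf{P_{Dual}^*}\le\mathbf{P^*}$.

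The main obstacle is ruling out the degenerate vertical hyperplane $\lambda=0$, for which the separating functional produces no admissible dual point. This is exactly where closedness is indispensable: it guarantees $(b,v)\in\mathcal{C}$, and substituting this point into the separation inequality gives $\langle y,b\rangle+\lambda v\ge\beta>\langle y,b\rangle+\lambda(v+\epsilon)$, hence $0>\lambda\epsilon$ and so $\lambda<0$, which simultaneously eliminates the vertical case and fixes the correct sign for normalization. Without closedness one can only separate from $\overline{\mathcal{C}}$, the attainment $(b,v)\in\mathcal{C}$ may fail, and the vertical case survives -- precisely the mechanism behind nonzero duality gaps in infinite-dimensional LP. I would finally note that this finite-dimensional separation argument is the concrete instance of the general ordered-vector-space duality theorems of Anderson--Nash and Barvinok cited in the statement.
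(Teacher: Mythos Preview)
The paper does not actually prove this theorem; it is stated as a preliminary result with citations to Anderson--Nash and Barvinok, and no argument is given. Your proof is the standard separating-hyperplane argument that those references use, so in spirit you are reproducing exactly what the paper defers to.

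There is, however, one genuine gap. You need the set $\mathcal{C}=\{(Ax+s,\langle x,c\rangle):x\ge 0,\ s\ge 0\}$ to be closed in order to conclude $(b,v)\in\mathcal{C}$ and then run the separation. You justify this by writing $\mathcal{C}=D+(\mathbb{R}^m_+\times\{0\})$ with $D=\{(Ax,\langle x,c\rangle):x\ge 0\}$ closed by hypothesis. But the Minkowski sum of a closed convex set and a closed polyhedral cone is \emph{not} closed in general (e.g.\ $\{(x,y):x>0,\ xy\ge 1\}+\{(0,-t):t\ge 0\}=\{(x,z):x>0\}$), so this step is unjustified as written. In the finite-dimensional setting the paper literally writes down, the gap is harmless: $D$ is the linear image of $\mathbb{R}^n_+$ and hence polyhedral, so $\mathcal{C}$ is polyhedral and closed---but then the closedness hypothesis is vacuous and the theorem collapses to ordinary LP duality. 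The substantive content (and the way the paper invokes the result in Appendix~E, for infinite-dimensional LPs over measures) is the infinite-dimensional version, where your inference from $D$ closed to $\mathcal{C}$ closed fails. The clean fix is to follow Anderson--Nash and work with the \emph{equality}-constrained form from the outset: there the relevant set is $D$ itself, the hypothesis is exactly closedness of $D$, and your separation argument goes through verbatim without introducing slacks. Alternatively, for the inequality form the correct hypothesis is closedness of $\mathcal{C}$, not of $D$.
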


Consider the semidefinite programming (SDP) problem in standard form
\begin{align}
\mathbf{P^*}:=&\ \min \langle C , X \rangle \label{SDP}\\
&\hbox{s.t.}\quad \langle A_i , X \rangle =b_i,\ i=1,...,m \label{SDP_1}\subeqn\\
&X \succcurlyeq 0 \label{SDP_2}.\subeqn
\end{align}
where,  $A_i,C \in  \reals^n \times \reals^n $, vector $b \in \reals^m$, and $X \in \reals^n \times \reals^n $, $\langle C , X \rangle = trace(CX)$. Based on standard results on SDP \cite{M.Trnovska2005,Todd}, the dual problem of \eqref{SDP} reads as
\begin{align}
\mathbf{P_{Dual}^*}:=&\ \max b^Ty \label{SDPD}\\
&\hbox{s.t.}\quad C - \sum_{i=1}^m A_iy_i \succcurlyeq 0 \label{SDPD1}\subeqn
\end{align}
%
The following theorem shows the relationship of primal and dual problems.

\begin{Theorem}
	\label{sec2:Theo2}	
	\textbf{Slater's sufficient condition:}  if the feasible set of strictly positive matrices in constraint of primal SDP is nonempty, then there is no duality gap between \eqref{SDP} and \eqref{SDPD}, i.e., $\mathbf{P^*}=\mathbf{P_{Dual}^*}$, (\cite{M.Trnovska2005,Todd}) . 
\end{Theorem}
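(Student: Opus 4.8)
The plan is to prove the two inequalities $\mathbf{P}^*\ge\mathbf{P}_{Dual}^*$ and $\mathbf{P}^*\le\mathbf{P}_{Dual}^*$ separately: the first (weak duality) is an elementary trace computation, whereas the second comes from a separating-hyperplane argument in which Slater's strict-feasibility hypothesis is exactly the ingredient that excludes the degenerate separator. Throughout I treat the nontrivial case in which both optimal values are finite.

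\textbf{Weak duality.} First I would take any $X$ feasible for \eqref{SDP} and any $y$ feasible for \eqref{SDPD}, and set $S:=C-\sum_{i=1}^m A_i y_i\succcurlyeq 0$. Using $\langle A_i,X\rangle=b_i$ and linearity of the trace inner product,
\begin{equation*}
\langle C,X\rangle-b^Ty=\langle C,X\rangle-\sum_{i=1}^m y_i\langle A_i,X\rangle=\langle S,X\rangle\ge 0,
\end{equation*}
since the trace of the product of two positive semidefinite matrices is nonnegative. Taking the infimum over primal-feasible $X$ and the supremum over dual-feasible $y$ yields $\mathbf{P}^*\ge\mathbf{P}_{Dual}^*$.

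\textbf{Strong duality by separation.} For the reverse inequality I would form the convex set of attainable residual/objective pairs
\begin{equation*}
\mathcal{V}:=\left\{(v,\tau)\in\reals^m\times\reals:\ \exists\,X\succcurlyeq 0,\ b_i-\langle A_i,X\rangle=v_i\ (i=1,\dots,m),\ \langle C,X\rangle\le\tau\right\}.
\end{equation*}
Convexity follows because $\{X\succcurlyeq 0\}$ is a convex cone and $X\mapsto\langle A_i,X\rangle$, $X\mapsto\langle C,X\rangle$ are linear. By definition of $\mathbf{P}^*$ the point $(0,\mathbf{P}^*-\e)$ lies outside $\mathcal{V}$ for every $\e>0$, since a witness $X$ would be primal feasible with objective strictly below $\mathbf{P}^*$. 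The separating-hyperplane theorem then supplies a nonzero $(y,\lambda)\in\reals^m\times\reals$ with $\lambda\ge 0$ (the sign forced by the fact that $\tau$ may be increased freely inside $\mathcal{V}$) such that $\langle y,v\rangle+\lambda\tau\ge\lambda(\mathbf{P}^*-\e)$ on $\mathcal{V}$. When $\lambda>0$ I normalize $\lambda=1$; evaluating the resulting inequality at $X=0$ gives $b^Ty\ge\mathbf{P}^*-\e$, while letting $X$ scale along any ray $tX_0$ with $X_0\succcurlyeq 0$ and $t\to\infty$ forces $\langle C-\sum_{i=1}^m A_iy_i,X_0\rangle\ge 0$, i.e. $S:=C-\sum_{i=1}^m A_iy_i\succcurlyeq 0$. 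Thus $y$ is dual feasible with value at least $\mathbf{P}^*-\e$, and letting $\e\to 0$ gives $\mathbf{P}_{Dual}^*\ge\mathbf{P}^*$, which with weak duality yields $\mathbf{P}^*=\mathbf{P}_{Dual}^*$ together with attainment of the dual optimum.

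\textbf{Main obstacle.} The crux is ruling out the vertical separator $\lambda=0$, which would carry no information about the objective and produce no admissible dual point. This is exactly where Slater's hypothesis enters: strict feasibility supplies $\hat X\succ 0$ with $\langle A_i,\hat X\rangle=b_i$, so that perturbing $\hat X$ by small symmetric $\Delta X$ (which keeps $\hat X+\Delta X\succcurlyeq 0$ because $\hat X$ is positive definite) realizes residuals $v=-(\langle A_i,\Delta X\rangle)_i$ filling a full neighborhood of $v=0$ in $\mathcal{V}$. A $\lambda=0$ separator would then force $\langle y,v\rangle\ge 0$ on that neighborhood, hence $y=0$, contradicting $(y,\lambda)\neq 0$. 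Therefore $\lambda>0$, the normalization is legitimate, and the argument closes. The remaining points — convexity of $\mathcal{V}$, and the careful handling of non-strict separation when $(0,\mathbf{P}^*-\e)$ lies on $\partial\mathcal{V}$ — are routine and inessential to the main idea.
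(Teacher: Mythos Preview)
The paper does not actually prove Theorem~\ref{sec2:Theo2}; it is stated as a standard preliminary fact with citations to \cite{M.Trnovska2005,Todd} and no argument is supplied. So there is nothing to compare against: your write-up is simply a self-contained proof of a result the paper imports.

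Your argument is the standard conic-duality proof via a separating hyperplane, and it is essentially correct. One technical point worth tightening concerns the step where you rule out $\lambda=0$: the claim that perturbations $\hat X+\Delta X$ produce residuals $v=-(\langle A_i,\Delta X\rangle)_i$ filling a \emph{full} neighborhood of the origin in $\reals^m$ requires the map $\Delta X\mapsto(\langle A_i,\Delta X\rangle)_{i=1}^m$ to be surjective, i.e., that $A_1,\dots,A_m$ are linearly independent in the space of symmetric matrices. If they are not, a nonzero $y$ with $\sum_i y_iA_i=0$ exists, and $(y,0)$ can serve as a degenerate separator that your argument does not exclude. The usual fix is to note that linear dependence of the $A_i$ either renders the primal infeasible (if the corresponding combination of $b_i$ is nonzero) or yields a redundant equality that can be dropped without loss of generality; after this reduction your argument goes through cleanly.
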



\section{Problem Statement}\label{Formu}

In this work, we consider \emph{constrained volume optimization problems} defined as follows: Let $\left(\chi,\Sigma_x,\mu_x\right)$ be a given measure space with $\Sigma_x$ denoting the Borel $\sigma$-algebra of $\chi \subset \reals^n$ and $\mu_x$ denoting a finite nonnegative Borel measure on $\Sigma_x$. Consider semialgebraic sets $\mathcal{S}_1: \reals^n \rightarrow \Sigma_x $ and $\mathcal{S}_2: \reals^n \rightarrow \Sigma_x $ as follows
\begin{align} \label{intro_set1}
	\mathcal{S}_1(a):= \left\lbrace  x\in \chi:\  \mathcal{P}_{1j}(x,a)\geq0, j=1,\dots ,o_{1} \right\rbrace 
\end{align}
\begin{align} \label{intro_set2}
	\mathcal{S}_2(a):= \left\lbrace  x\in \chi:\  \mathcal{P}_{2j}(x,a)\geq0, j=1,\dots ,o_{2} \right\rbrace 
\end{align}
where $\mathcal{P}_{1j}:{{\mathbb{R}}}^n\times {{\mathbb{R}}}^m\rightarrow{\mathbb{R}}$, $j=1,2,\dots ,o_{1}$, and $\mathcal{P}_{2j}:{{\mathbb{R}}}^n\times {{\mathbb{R}}}^m\rightarrow{\mathbb{R}}$, $j=1,2,\dots ,o_{2}$ are given polynomials. We focus on the following problem.
\begin{align} \label{intro_P1}
	\mathbf{P_{vol}^*} := \sup_{a\in\mathcal{A}}~\mbox{vol}_{\mu_x} \mathcal{S}_1(a),\\
	\hbox{s.t.}\quad & \mathcal{S}_1(a) \subseteq \mathcal{S}_2(a)\subeqn
\end{align}
where, $\mbox{vol}_{\mu_x} \mathcal{S}_1(a) = \int_{\mathcal{S}_1(a)} d\mu_x$ is the volume of the set $\mathcal{S}_1(a)$ with respect to given finite Borel measure $\mu_x$. In the problem \eqref{intro_P1}, we are looking for $a \in \mathcal{A} \subset {{\mathbb{R} }}^m$, the parameters of sets $\mathcal{S}_1$ and $\mathcal{S}_2$, such that volume of the set $\mathcal{S}_1(a)$ becomes maximum while it is contained in the set $\mathcal{S}_2(a)$.



\section{Applications in Systems and Control}\label{App}

In this section we focus on some well-known challenging problems in the area of system and control which are, in general, nonconvex and computationally hard. As an first step in the development of convex relaxations of these problems, we reformulate them as a constrained volume optimization problem. 
\subsection{Region of Attraction} \label{App_Lya}
Consider a continuous-time system of the form
\begin{equation} \label{Sys_C}
	\dot{x} = f(x) 
\end{equation}
where, $f : \mathbb{R}^{n}  \rightarrow \mathbb{R}^n$ is a polynomial function, $x \in \chi \subset \mathbb{R}^n$ are system states and $\chi$ is compact that contains the origin. Let the origin $x = 0$ be an asymptotically stable equilibrium point for the system. The region of attraction (ROA) set $\mathcal{R}_x \subseteq \chi$ is defined as largest set of all initial states whose trajectories converge to the origin. 

For the system in (\ref{Sys_C}), assume there exist a function $V(x)$ such that 
\begin{equation}
	V(0)=0, V(x) > 0 \ \mbox{for x} \neq 0 \ 
\end{equation}
The level set defined as $\mathcal{R} = \left\lbrace   x \in \chi : V(x) \leq r    \right\rbrace $ is an inner approximation of ROA if $\dot{V}(x)< 0 $ for all $x \in \mathcal{R}$ and $\dot{V}(x) = 0 $ for $x = 0$ \cite{khalil2002nonlinear}. In this case function $V(x)$ is a Lyapunov function for the system in (\ref{Sys_C}). We assume that polynomial system in \eqref{Sys_C} admits a polynomial Lyapunov function (see \cite{AliAhmadi} for discussion on existence of a polynomial Lyapunov function). Hence, we can describe it as a finite order polynomial $V(x) = \sum_{\|i\|_1 \leq d} a_i x^i \in \reals_d[s]$, where $a \in \mathcal{A} \subset \reals^{S_{n,d}}$ is a vector of unknown coefficients,  (\cite{Papachristodoulou2002,Majumdara}). The following optimization problem can be used to find $V(x)$ and corresponding inner approximation of maximal ROA. For a given system in \eqref{Sys_C} and given compact sets $\chi$ and $\mathcal{A}$, solve
\begin{align} 
	\mathbf{P_{ROA}^*}:=&\ \max_{a \in \mathcal{A} } \mbox{vol}_{\mu_x}(\mathcal{R})\label{Lya} \\
	\hbox{s.t.}\quad & V(x) = \sum_{\|i\|_1 \leq d} a_i x^i , \ V(0)=0 \subeqn  \label{Lya_1}\\
	&\ V(x) > 0, \ \mbox{for all } x \neq 0 \subeqn  \label{Lya_2}\\
	&\mathcal{R} = \left\lbrace   x \in \chi : \ 0 \leq V(x) \leq r    \right\rbrace  \label{Lya_3} \subeqn \\
	&\mathcal{R} \subseteq  \left\lbrace x \in \chi :  \dot{V}(x) \leq -\epsilon_r \Vert x \Vert_2^2 \right\rbrace \label{Lya_4} \subeqn
\end{align} 
where, $\hbox{vol}_{\mu_x}(\mathcal{R}) = \int_\mathcal{R} d\mu_x $ is the volume of the set $\mathcal{R}$ with respect to Lebesgue measure $\mu_x$ supported on $\chi$, $\Vert . \Vert_2$ is $l_2$ norm and $r > 0$ and $\epsilon_r > 0$ are given constants. By solving problem in \eqref{Lya}, we are in fact looking for a Lyapunov function $V(x)$ among the space of polynomial functions of order at most $d$.
By defining the sets $\mathcal{S}_1(a) = \{ x\in \chi :\ 0 \leq V(x,a) \leq r \} $ and $\mathcal{S}_2(a) = \{x\in \chi : \dot{V}(x,a) \leq - \epsilon_r \Vert x \Vert_2^2 \} $, problem in \eqref{Lya} can be restated as volume optimization problem in \eqref{intro_P1}.
With the same reasoning, one can extend the problem in \eqref{Lya} for discrete-time systems $ x_{k+1} = f(x_k) $ by replacing the derivative of Lyapunov function  $\dot{V}(x) $ with the difference Lyapunov function $ \bigtriangleup V(x) = V(x_{k+1})-V(x_k)$.
\subsection{Maximal Invariant Set} \label{App_Inv}
Consider a discrete time system
\begin{equation} \label{Sys1}
	x_{k+1} = f(x_k)
\end{equation}
where, $f : \mathbb{R}^{n}  \rightarrow \mathbb{R}^{n}$ is a polynomial function and $x_k \in \chi_{ext} \subset \mathbb{R}^{n}$ are system states. Given a compact set $\chi \subset \chi_{ext}$, the set $\mathcal{V} \subset \chi$ is \textit{robustly invariant} if
\begin{equation}\label{InvarSet}
	f(x) \in \mathcal{V}, \hbox{   for all   }  x \in \mathcal{V}
\end{equation}
Hence, \textit{maximal invariant set} is the maximal set of all initial states whose trajectories remains inside the set. The following statement holds true for robustly invariant sets. Consider the set $\mathcal{V} =\{ x\in \chi:\  \mathcal{P}(x)\geq 0  \}$, for bounded above function $\mathcal{P}(x)$. The set $ \mathcal{V}  $ is an robustly invariant set for dynamical system above if $\mathcal{P}(f(x)) \geqslant 0$ for all $x \in \mathcal{V}$.

In order to find a polynomial approximation of maximal robustly invariant set for dynamical system \eqref{Sys1}, we characterize the function $\mathcal{P}(x)$ with finite order polynomial as $\mathcal{P}(x) = \sum_{\|i\|_1 \leq d} a_i x^i \in \reals_d[x] $, where $a \in \mathcal{A} \subset \reals^{S_{n,d}} $ is vector of unknown coefficients. Then, we consider following optimization problem to obtain unknown coefficients.

Assume that the given compact set $\chi$ can be described as $\chi = \left\lbrace   x \in \reals^{n} : \mathcal{P}_{\chi}(x) \geq 0    \right\rbrace$ $\subset \reals^{n}$, for some polynomial $\mathcal{P}_{\chi}(x)$.
Then for a given system in \eqref{Sys1} and given compact sets $\chi \subset \reals^{n}$ and $\mathcal{A} \subset \reals^{S_{n,d}}$, solve
\begin{small}
	\begin{align}
		\mathbf{P_{INV}^*}:=&\ \max_{a \in \mathcal{A} } \mbox{vol}_{\mu_x}(\mathcal{\mathcal{V}})\label{max_pc3} \\
		\hbox{s.t.}\quad & \mathcal{P}(x) = \sum_{\|i\|_1 \leq d} a_i x^i , \subeqn  \label{eq:Inv1}\\
		\mathcal{V} &= \left\lbrace   x \in \chi: \ \mathcal{P}(x) \geq 0   \right\rbrace  \label{eq:Inv3} \subeqn \\
		\mathcal{V} & \subset  \left\lbrace x \in  \chi : \mathcal{P}(f(x)) \geq 0 \right\rbrace \label{eq:DV} \subeqn
	\end{align}
\end{small}where, $\mbox{vol}_{\mu_x}(\mathcal{V}) = \int_\mathcal{V} d\mu_x $ is the volume of the set $\mathcal{V}$ with respect to Lebesgue measure $\mu_x$ supported on $\chi_{ext}$.

By defining the sets $\mathcal{S}_1(a) = \{ x \in \chi : \mathcal{P}(x,a) \geq 0 \} $ and $\mathcal{S}_2(a) = \{x \in \chi : \mathcal{P}(f(x),a) \geq 0 \} $, problem \eqref{max_pc3} can be restated as volume optimization problem \eqref{intro_P1}.

\subsection{Probabilistic Control of Uncertain Systems} \label{App_PC}
Consider the following discrete-time stochastic dynamical system 
\begin{equation}\label{intro_PE1}
	x_{k+1}= f(x_k,u_k,\omega_k,\delta)
\end{equation}
where $f: \reals^{n_x+n_u+n_{\omega}+n_{\delta}} \rightarrow \reals^{n_x}$ is a polynomial function,  $x_k \in \chi \subseteq \reals^{n_x}$ is system state, $u_k \in \psi \subseteq \reals^{n_u}$ is control input, and $\omega_k\in \Omega \subseteq R^{m_\omega}$ is disturbance, at time step $k$, and  $\delta\in \Delta \subseteq \reals^{n_\delta}$ is uncertain model parameter. The initial state $x_0 \in \chi_0 \subseteq \chi$, model parameter $\delta $, and disturbance $\omega_k$ at time $k$ are independent random variables with probability measure $\mu_{x_0}$, $\mu_{\delta}$, and $\mu_{\omega_k}$ supported on $\chi_0 $, $ \Delta $, and $\Omega $, respectively. We assume that  $\chi_0, \Delta$, and $\Omega $ are compact semialgebraic sets of the form
$\chi_0 = \lbrace x \in \chi : \mathcal{P}_0(x) \geq 0  \rbrace$,
$\Delta = \lbrace \delta \in \reals^{n_{\delta}} : \mathcal{P}_{\delta}(\delta) \geq 0 \rbrace$,
$\Omega = \lbrace \omega \in \reals^{n_{\omega}} : \mathcal{P}_{\omega}(\omega) \geq 0 \rbrace$
for given polynomials $ \mathcal{P}_{0}, \mathcal{P}_{\delta}, \mathcal{P}_{\omega}$. Let $N$ be a given integer and $\chi_N$ be given desired terminal set. Also, $\chi_{x_k}$ is given compact feasible set for states of the system at time $k$.

In \textit{probabilistic control}, we aim at finding a polynomial state feedback control input to maximize the probability that $x_k$ belongs to the feasible set $\chi_{x_k}$ and states of system reach the target set $\chi_N$ in at most $N$ steps \cite{Jasour2013}.
We assume that the desired terminal set $\chi_N$ and the feasible sets $\chi_{x_k}$ are defined as the compact semialgebraic sets as 
\begin{equation}\label{Pro_set}
	\chi_N = \lbrace x \in \chi : \mathcal{P}_{N}(x) \geq 0 \rbrace
\end{equation}
\begin{equation}\label{Pro_Fset}
	\chi_{x_k} = \lbrace x \in \chi : \mathcal{P}_{k}(x) \geq 0 \rbrace
\end{equation}
Also, control input $u(x_k): \reals^{n_x} \rightarrow \reals^{n_u}$ takes the state feedback form as 
\begin{equation}\label{Pro_u}
	u_j(x_k) = \sum_{||i||_1\leq d_u} a_{ji} x_k^i , \quad j=1,...,n_u 
\end{equation}
where $u_j: \reals^{n_x} \rightarrow \reals$ is polynomial of order no more that $d_u$ and $ a \in \mathcal{A} \subset \reals^{S_{{n_x},{d_u}} \times n_u}  $ is a vector of unknown coefficients. 
Under the definitions provided above, probabilistic control problem can be stated as following optimization problem.
\begin{align}\label{intro_PE1_2}
	\mathbf {P_{PC}^*} :=&\ \max_{a \in \mathcal{A}}\hbox{Prob}_{\mu_{{x_0}},\mu_{\delta},\{\mu_{\omega_k}\}_{k=0}^{N-1}}\left\lbrace x_N \in \chi_N, \{x_k \in \chi_{x_k}\}_{k=1}^{N-1}  \right\rbrace\\
	\hbox{s.t.}\quad &  x_{k+1}= f(x_k,u(x_k),\omega_k,\delta), \subeqn  \\
	& u_j(x_k) = \sum_{||i||_1\leq d_u} a_{ji} x_k^i, j=1,...,n_u  \subeqn \\
	& x_0 \sim \mu_{x_0}, \delta \sim \mu_{\delta}, \omega_k \sim \mu_{\omega_k}, k=0,\dots,N-1 \subeqn
\end{align}
Clearly, $x_k$ can explicitly be written in terms of control coefficients vector $a$, initial states $x_0$, uncertain parameters $\delta$, and disturbances $\{\omega_j\}_{j=0}^{k-1}$, using the dynamical system given in \eqref{intro_PE1} as $x_k = \mathcal{P}_{x_k} (x_0,\delta,\{\omega_i\}_{i=0}^{k-1},a)$ where $\mathcal{P}_{x_k}: \reals^{n_x + n_{\delta} + k \times n_{\omega} + S_{{n_x},{d_u}} \times n_u} \rightarrow \reals^{n_x}$ is a polynomial. Then, probabilistic control problem in \eqref{intro_PE1_2} can be restated as a volume optimization problem as in \eqref{intro_P1} as follow:
\begin{align}\label{intro_PE1_3}
	\mathbf {P_{PC}^*} := \max_{a \in \mathcal{A}} \mbox{vol}_{\mu_{{x_0}},\mu_{\delta},\{\mu_{\omega_k}\}_{k=0}^{N-1}}  \mathcal{S}_1(a)
\end{align}
where the objective function reads as
\begin{equation}
	\mbox{vol}_{\mu_{{x_0}},\mu_{\delta},\{\mu_{\omega_k}\}_{k=0}^{N-1}} \mathcal{S}_1(a) =\int_{\mathcal{S}_1(a)} d\mu_{x_0}d\mu_{\delta}d\mu_{\omega_0}...d\mu_{\omega_{N-1}} =\mbox{Prob} _{\mu_{{x_0}},\mu_{\delta},\{\mu_{\omega_k}\}_{k=0}^{N-1}} \mathcal{S}_1(a)
\end{equation}

The semialgebraic set $\mathcal{S}_1(a)$ is defined as
\begin{footnotesize}
	\begin{equation}
		\mathcal{S}_1(a) = \left\lbrace  (x_0, \delta,\{\omega_k\}_{k=0}^{N-1}): \mathcal{P}_{N}\left( \mathcal{P}_{x_N} (x_0, \delta,\{\omega_k\}_{k=0}^{N-1}, a)\right) \geq 0,\ \left\lbrace \mathcal{P}_{k}\left( \mathcal{P}_{x_k} (x_0, \delta,\{\omega_i\}_{i=0}^{k-1}, a)\right) \geq 0\right\rbrace _{k=1}^{N-1}  \right\rbrace 
	\end{equation}
\end{footnotesize}
In defined volume optimization \eqref{intro_PE1_3}, the set $\cS_2$ is $ \reals^{n_x}$.

\subsection{Generalized Sum of Squares Problem} \label{App_GSOS}
Using SOS representation Lemma \ref{sec2:lem7}, we can find a polynomial that is strictly positive on the given semialgebraic set. Many different problems in system and control can be reformulated as SOS representation problem which results in semidefinite programming problems.

In this work, we generalize the notion of SOS and introduce a new class of SOS problems where enable us to find a strictly positive polynomial on some unknown semialgebraic sets. More precisely, we define the \textit{Generalized Sum of Squares} (GSOS) problem as follow.

\textbf{Generalized Sum of Squares:} Consider polynomial $\mathcal{P}(x,a) \in \reals_d[x]$ and semialgebraic set $\mathcal{S}_1(a):= \left\lbrace  x\in \chi:\  \mathcal{P}_{1j}(x,a)\geq0, j=1,\dots ,o_1 \right\rbrace $ where $a \in \mathcal{A} \subset \reals^m$ are unknown parameters. We aim at finding parameters $a$ such that polynomial $\mathcal{P}(x,a)$ is strictly positive on the set $\mathcal{S}_1(a)$. 

This problem can be restated as a volume optimization problem in \eqref{intro_P1} by defining the set $\mathcal{S}_2(a):= \left\lbrace  x\in \chi:\  \mathcal{P}(x,a)\geq0 \right\rbrace $. As an example of GSOS, we could consider the problem of finding polynomial Lyapunov function as in section \ref{App_Lya} where we are looking for a polynomial $\epsilon ||x||_2^2-\dot{V}(x)$ to be strictly positive on the set $\left\lbrace  x\in \chi:\  0 \leq V(x) \leq r \right\rbrace $, where $\epsilon <0$ and $r>0$.


\section{Equivalent Convex Problem on Measures and Moments}\label{sec:Primal}

In this section, we first provide an equivalent infinite linear program (LP) on finite nonnegative Borel measures to
solve the constrained volume optimization problem in \eqref{intro_P1}. Then, we provide a finite dimensional semidefinite program (SDP) in moment space.
Consider the sets of volume optimization problem $\mathcal{S}_1$ and $\mathcal{S}_2$ defined in \eqref{intro_set1} and \eqref{intro_set2}. Given polynomials $\mathcal{P}_{1j}:{{\mathbb{R}}}^n\times {{\mathbb{R}}}^m\rightarrow{\mathbb{R}}$ for $j=1,2,\dots ,o_{1} $, and polynomials $\mathcal{P}_{2j}:{{\mathbb{R}}}^n\times {{\mathbb{R}}}^m\rightarrow{\mathbb{R}}$ for $ j=1,2,\dots ,o_{2} $, we define following sets as
\begin{equation}\label{K1}
	\mathcal K_{1} :=\left\{(x,a):\mathcal{P}_{1j}(x,a)\geq0, j=1,\ldots ,o_1 \right\}
\end{equation}
\begin{equation}\label{K2}
	\mathcal K_{2} :=\left\{(x,a):\mathcal{P}_{2j}(x,a)\geq0, j=1,\ldots ,o_2 \right\}
\end{equation}

\begin{assumption} \label{Assum} Assume that $\cK_1$ and $\cK_2$ satisfy Putinar's property represented in Section \ref{sec:definitions}. This implies that sets $\cK_1$ and $\cK_2$ are compact; Hence the projections of the sets $\cK_1$ and $\cK_2$ onto $x$-coordinates and onto $a$-coordinates are also compact. Also, we assume without loss of generality that $x \in \chi:=[-1,1]^n$ and $a \in \cA:=[-1,1]^m$ and the set $(\chi \times \mathcal{A}) \setminus \cK_1 = \left\lbrace   (x,a) \in \chi \times \mathcal{A} : (x,a)  \notin \cK_1 \right\rbrace $ has a nonempty interior.
\end{assumption}

\subsection{Linear Program on Measures}

As an intermediate step in the development of finite convex relaxations of the original problem in \eqref{intro_P1}, a related infinite dimensional convex problem on measures is provided as follows. Let $\mu_x$ be the given measure supported on $\chi$ defined in constrained volume problem \eqref{intro_P1}. The sets $\mathcal{K}_1$ and $\mathcal{K}_2$ are defined as \eqref{K1} and \eqref{K2} and let $\overline{\mathcal{K}_1}$ be the complement of the set $\mathcal{K}_1$. Consider the following problem on measures
\begin{align}
	\mathbf{P_{measure}^*}:=&\ \sup_{\mu ,\mu_a} \int d\mu, \label{P20}\\
	& \hbox{s.t.}\  \mu \preccurlyeq \mu_a \times \mu_x, \label{P20_1}\subeqn\\
	&\mu_a \hbox{ is a probability measure},\ \mu_a\in\cM_+(\mathcal A), \label{P20_2} \subeqn\\
	&\mu_a \times \mu_x \in \cM_+(\overline{\mathcal K_1} \cup \mathcal K_2),\ \ \mu\in\cM_+(\mathcal K_1).  \label{P20_4}\subeqn
\end{align}
In the problem \eqref{P20}, we are looking for measures $\mu$ and $\mu_a$ supported on $\mathcal{K}_1$ and $\mathcal A$, such that $\mu$ is bounded by product measure $\mu_a \times \mu_x$ supported on $\overline{\mathcal K_1} \cup \mathcal K_2$. The following theorem, shows the equivalency of the problem in  \eqref{P20} and the original volume problem in \eqref{intro_P1}.

\begin{Theorem} \label{Theo 20}
	The optimization problem in \eqref{intro_P1} is equivalent to the infinite LP in \eqref{P20} in the following sense:
	\begin{enumerate}[i)]
		\item The optimal values are the same, i.e., $\mathbf{P_{vol}^*}=\mathbf{P_{measure}^*}$.
		\item If an optimal solution to \eqref{P20} exists, call it $\mu^*_a$, then any $a^*\in supp(\mu^*_a)$ is an optimal solution to \eqref{intro_P1}.
		\item If an optimal solution to \eqref{intro_P1} exists, call it $a^*$, then $\mu_a = \delta_{a^*}$, Dirac measure at $a^*$, and $\mu = \delta_{a^*} \times \mu_x$ is an optimal solution to \eqref{P20}.
	\end{enumerate}
\end{Theorem}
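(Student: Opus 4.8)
The plan is to first eliminate the inner measure $\mu$, then reduce the remaining optimization over $\mu_a$ to a scalar problem in the parameter $a$, from which all three claims follow. The key preliminary observation is that the support of a product measure factors, $\mathrm{supp}(\mu_a \times \mu_x) = \mathrm{supp}(\mu_a) \times \mathrm{supp}(\mu_x)$, and that the complement of $\overline{\mathcal{K}_1} \cup \mathcal{K}_2$ is precisely $\mathcal{K}_1 \setminus \mathcal{K}_2$. Consequently, the support constraint \eqref{P20_4} on $\mu_a \times \mu_x$ is equivalent to the pointwise statement that every $a \in \mathrm{supp}(\mu_a)$ satisfies $\mathcal{S}_1(a) \subseteq \mathcal{S}_2(a)$, i.e. is feasible for \eqref{intro_P1}. (Here I use that the given $\mu_x$ has full support on $\chi$, as in the applications; otherwise the containment is only enforced on $\mathrm{supp}(\mu_x)$, which is exactly the part the objective measures.)

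First I would fix a feasible $\mu_a$ and maximize over $\mu$. Applying Lemma \ref{sec2:lem6} with dominating measure $\mu_a \times \mu_x$ on the compact set $\mathcal{A} \times \chi$ and target set $\mathcal{K}_1$, the constraints $\mu \preccurlyeq \mu_a \times \mu_x$ and $\mu \in \mathcal{M}_+(\mathcal{K}_1)$ give
\begin{equation*}
\sup_{\mu} \int d\mu = (\mu_a \times \mu_x)(\mathcal{K}_1) = \int_{\mathcal{A}} \mu_x(\mathcal{S}_1(a)) \, d\mu_a(a),
\end{equation*}
the second equality by Tonelli. Since $\mu_a$ is a probability measure and $\mu_x(\mathcal{S}_1(a)) \leq \mathbf{P_{vol}^*}$ for every feasible $a$, this average is at most $\mathbf{P_{vol}^*}$, so $\mathbf{P_{measure}^*} \leq \mathbf{P_{vol}^*}$. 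For the reverse inequality I would test, for each feasible $a$, the pair $\mu_a = \delta_a$ and $\mu = (\delta_a \times \mu_x)|_{\mathcal{K}_1}$: feasibility of $a$ makes $\{a\} \times \chi \subseteq \overline{\mathcal{K}_1} \cup \mathcal{K}_2$, so \eqref{P20_4} holds, and the objective equals $\mu_x(\mathcal{S}_1(a))$. Taking the supremum over feasible $a$ gives $\mathbf{P_{measure}^*} \geq \mathbf{P_{vol}^*}$, proving (i); taking $a = a^*$ optimal proves (iii), with $\mu$ understood as the restriction of $\delta_{a^*} \times \mu_x$ to $\mathcal{K}_1$, which carries mass $\mu_x(\mathcal{S}_1(a^*)) = \mathbf{P_{vol}^*}$.

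For (ii), let $(\mu^*, \mu_a^*)$ be optimal. Optimality forces $\int d\mu^* = (\mu_a^* \times \mu_x)(\mathcal{K}_1)$, since Lemma \ref{sec2:lem6} would otherwise allow enlarging $\mu^*$; hence $\int_{\mathcal{A}} \mu_x(\mathcal{S}_1(a)) \, d\mu_a^*(a) = \mathbf{P_{vol}^*}$. As the integrand is bounded by $\mathbf{P_{vol}^*}$ on the feasible support while its $\mu_a^*$-average equals $\mathbf{P_{vol}^*}$, we get $\mu_x(\mathcal{S}_1(a)) = \mathbf{P_{vol}^*}$ for $\mu_a^*$-almost every $a$.

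The main obstacle is upgrading this almost-everywhere equality to the claimed ``for every $a^* \in \mathrm{supp}(\mu_a^*)$''. The route I would take is to prove that $a \mapsto \mu_x(\mathcal{S}_1(a))$ is upper semicontinuous: because the inequalities $\mathcal{P}_{1j} \geq 0$ are closed, the set map $a \mapsto \mathcal{S}_1(a)$ has closed graph, so any limit point of points $x_k \in \mathcal{S}_1(a_k)$ with $a_k \to a$ lies in $\mathcal{S}_1(a)$; the reverse Fatou lemma for the finite measure $\mu_x$ then yields $\limsup_k \mu_x(\mathcal{S}_1(a_k)) \leq \mu_x(\mathcal{S}_1(a))$. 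Granted upper semicontinuity, if some $a_0 \in \mathrm{supp}(\mu_a^*)$ had $\mu_x(\mathcal{S}_1(a_0)) < \mathbf{P_{vol}^*}$, there would be a neighborhood of $a_0$ on which $\mu_x(\mathcal{S}_1(\cdot)) < \mathbf{P_{vol}^*}$, forcing that neighborhood to have zero $\mu_a^*$-measure and contradicting $a_0 \in \mathrm{supp}(\mu_a^*)$. This closes (ii). I also expect the bookkeeping around the non-closed region $\overline{\mathcal{K}_1} \cup \mathcal{K}_2$ and the product-support identity (valid since $\mu_a,\mu_x$ are Radon on the compact cube of Assumption \ref{Assum}) to need care, but these are the routine parts.
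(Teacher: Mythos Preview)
Your argument for (i) and (iii) follows essentially the same route as the paper: both reduce the inner optimization over $\mu$ to $(\mu_a\times\mu_x)(\mathcal{K}_1)$ via Lemma~\ref{sec2:lem6}, translate the support constraint $\mu_a\times\mu_x\in\mathcal{M}_+(\overline{\mathcal{K}_1}\cup\mathcal{K}_2)$ into feasibility of $\mathrm{supp}(\mu_a)$ for \eqref{intro_P1}, and then compare the resulting average $\int \mathrm{vol}_{\mu_x}(\mathcal{S}_1(a))\,d\mu_a$ against the scalar optimum by testing Dirac measures. The paper packages the averaging step as an auxiliary problem $\mathbf{P_{\mu_a}}$ over $\mu_a\in\mathcal{M}_+(\mathcal{A}_{\mathcal{F}})$ before invoking Lemma~\ref{sec2:lem6}, while you eliminate $\mu$ first and then optimize over $\mu_a$, but the ingredients and logic are identical.

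Where you go further is on part (ii): the paper's Appendix~A in fact only establishes (i) explicitly and leaves (ii) and (iii) implicit, whereas you supply the upper-semicontinuity argument for $a\mapsto\mu_x(\mathcal{S}_1(a))$ (closed graph of the polynomial sublevel sets plus reverse Fatou for the finite measure $\mu_x$) needed to pass from $\mu_a^*$-a.e.\ optimality to optimality at every point of $\mathrm{supp}(\mu_a^*)$. You also correctly flag that in (iii) the measure $\mu$ should be the restriction of $\delta_{a^*}\times\mu_x$ to $\mathcal{K}_1$ rather than the full product, since $\mathrm{supp}(\mu_x)=\chi$ need not lie in $\mathcal{S}_1(a^*)$; this is a minor imprecision in the paper's statement that your version repairs.
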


\begin{proof}
	See Appendix A.
\end{proof}

Problem \eqref{P20}, requires information of the set $\mathcal K_1$ and its complement $\overline{\mathcal{K}_1}$. From a numerical implementation point of view, this results on an ill conditioned problem. To solve problem \eqref{P20}, we first need to obtain finite relaxations that provide an outer approximation of the sets of problem \eqref{P20}. The outer approximation of the sets $\mathcal K_1$ and $\overline{\mathcal{K}_1}$ intersect and thus poor performance of the solution is observed.
To solve this, we modify the provided problem on measures as follows.

We first aim at finding the approximation of the constraint of the original problem \eqref{intro_P1}. The set that approximates the constraint of volume problem includes all design parameters $a \in \mathcal{A}$ for which the set $\mathcal{S}_1(a)$ is a subset of the set $ \mathcal{S}_2(a)$. Next, we look for parameter $a$ inside the obtained set that maximizes the volume of the set $\mathcal{S}_1(a)$.

Let $\mathcal{A}_{\mathcal{F}}$ be the set of all parameters $a \in \mathcal{A}$ for which the set $\mathcal{S}_1(a)$ is a subset of the set $ \mathcal{S}_2(a)$, i.e. 
\begin{equation}\label{AF}
	\mathcal{A}_{\mathcal{F}}:=\{a\in\mathcal{A}: \mathcal{S}_1(a) \subseteq \mathcal{S}_2(a)\} 
\end{equation}

To obtain the approximation of the set $\mathcal{A}_{\mathcal{F}}$, consider the following infinite LP on continuous functions:

\begin{align}
	\mathbf{P_{\mathcal{A}_f}^*}:=&\ \inf_{f \in \mathcal{C}(a) } \int_{\mathcal{A}} f(a) d\mu_{\mathcal{A}}, \label{P21}\\
	\hbox{s.t.}\quad & f(a) \geq 1 \hbox{ on } \mathcal{K}_1 \cap \overline{\mathcal{K}_2}, \label{P21_1}\subeqn\\
	&f(a) \geq 0 \hbox{ on } \chi \times \mathcal{A}.\label{P21_2}\subeqn
\end{align}
where, $ f \in \mathcal{C}(a) $ and $\overline{\mathcal{K}_2}$ is the complement of the set $\mathcal{K}_2$.

Then, following Theorem holds true.

\begin{Theorem} \label{Theo 1}
	Let $\mu_{\cA}$ be the Lebesgue measure of the set $\cA$. Also, let $\mathcal{I}_{\overline{\mathcal{A}_\mathcal{F}}}$ be the indicator function of the set $\overline{\mathcal{A}_{\mathcal{F}}}:=\{a\in\mathcal{A}: \mathcal{S}_1(a) \not\subseteq \mathcal{S}_2(a)\}$; i.e., $\mathcal{I}_{\overline{\mathcal{A}_\mathcal{F}}}(a) = 1$ if $a \in \overline{\mathcal{A}_\mathcal{F}}$ and 0 otherwise. Then
	\begin{enumerate}[i)]
		\item There is a sequence of continuous functions $f_i(a)$ to Problem \eqref{P21} that converges to the $\mathcal{I}_{\overline{\mathcal{A}_\mathcal{F}}}$ in $L_1$-norm sense, i.e., $\hbox{lim}_{i \rightarrow \infty} \int_{\cA}  \arrowvert f_i(a) - \mathcal{I}_{\overline{\mathcal{A}_\mathcal{F}}}(a) \arrowvert da = 0$.
		\item The set $\mathcal{A}_{f_i} = \left\{ a\in\mathcal{A}:\  f_i(a) < 1 \right\}$ converges to the set $\mathcal{A}_{\mathcal{F}}$, i.e., $\hbox{lim}_{i \rightarrow \infty} \mu_{\cA} (\cA_{\cF} - \mathcal{A}_{f_i} ) = 0$, 
		and  $\mathcal{A}_{f_i} \subseteq \mathcal{A}_\mathcal{F}$.
	\end{enumerate}
\end{Theorem}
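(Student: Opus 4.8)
The plan is to reduce \eqref{P21} to a pure approximation problem for the indicator of one Borel set. Since every admissible $f$ depends only on $a$, the constraint $f\ge 1$ on $\mathcal{K}_1\cap\overline{\mathcal{K}_2}$ is exactly the requirement that $f(a)\ge 1$ for every $a$ in the projection $B:=\pi_a(\mathcal{K}_1\cap\overline{\mathcal{K}_2})$ onto the $a$-coordinates, together with $f\ge 0$ on $\mathcal{A}$. A parameter $a$ lies in $B$ iff there is an $x$ with $(x,a)\in\mathcal{K}_1$ and $(x,a)\notin\mathcal{K}_2$, i.e. iff $\mathcal{S}_1(a)\not\subseteq\mathcal{S}_2(a)$; hence $B=\overline{\mathcal{A}_{\mathcal{F}}}$ is precisely the set whose indicator must be approximated. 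First I would record the pointwise lower bound: any feasible continuous $f$ satisfies $f\ge\mathcal{I}_{\overline{\mathcal{A}_{\mathcal{F}}}}$ on $\mathcal{A}$, being $\ge 1$ on $B$ and $\ge 0$ elsewhere, so $\int_{\mathcal{A}}f\,d\mu_{\mathcal{A}}\ge\mu_{\mathcal{A}}(B)$ and thus $\mathbf{P_{\mathcal{A}_f}^*}\ge\mu_{\mathcal{A}}(B)$.

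The delicate point, which I expect to be the main obstacle, is that continuity forces any feasible $f$ to satisfy $f\ge 1$ not merely on $B$ but on its topological closure $\mathrm{cl}(B)$; a priori this could raise the lower bound to $\mu_{\mathcal{A}}(\mathrm{cl}(B))$ and make $\mathcal{I}_{\mathrm{cl}(B)}$, rather than $\mathcal{I}_{\overline{\mathcal{A}_{\mathcal{F}}}}$, the natural $L_1$-limit. To close this gap I would invoke semialgebraic structure: $\mathcal{K}_1$ and $\overline{\mathcal{K}_2}$ are semialgebraic, so by the Tarski--Seidenberg theorem their intersection and its projection $B$ are semialgebraic; consequently the frontier $\mathrm{cl}(B)\setminus B$ is a semialgebraic set of dimension strictly smaller than $\dim B\le m$, hence Lebesgue-null. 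Therefore $\mu_{\mathcal{A}}(\mathrm{cl}(B))=\mu_{\mathcal{A}}(B)$, and the lower bound of the previous step is sharp at $\mu_{\mathcal{A}}(B)$.

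Next I would construct the minimizing sequence by outer regularity of the finite Lebesgue measure on the compact cube $\mathcal{A}=[-1,1]^m$. For each $i$ pick an open $U_i$ with $\mathrm{cl}(B)\subseteq U_i\subseteq\mathcal{A}$ and $\mu_{\mathcal{A}}(U_i)\le\mu_{\mathcal{A}}(\mathrm{cl}(B))+\tfrac1i=\mu_{\mathcal{A}}(B)+\tfrac1i$; since $\mathcal{A}$ is a compact metric (hence normal) space and $\mathrm{cl}(B)$, $\mathcal{A}\setminus U_i$ are disjoint closed sets, Urysohn's lemma yields a continuous $f_i:\mathcal{A}\to[0,1]$ with $f_i\equiv 1$ on $\mathrm{cl}(B)$ and $f_i\equiv 0$ off $U_i$. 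Each $f_i$ is feasible for \eqref{P21}. Because $f_i\equiv 1$ on $B$, the difference $f_i-\mathcal{I}_{\overline{\mathcal{A}_{\mathcal{F}}}}$ vanishes on $B$ and equals $f_i\ge 0$ on $\mathcal{A}\setminus B$, so $\int_{\mathcal{A}}|f_i-\mathcal{I}_{\overline{\mathcal{A}_{\mathcal{F}}}}|\,d\mu_{\mathcal{A}}=\int_{\mathcal{A}\setminus B}f_i\,d\mu_{\mathcal{A}}\le\mu_{\mathcal{A}}(U_i)-\mu_{\mathcal{A}}(B)\le\tfrac1i\to 0$. This establishes part (i) and simultaneously identifies $\mathbf{P_{\mathcal{A}_f}^*}=\mu_{\mathcal{A}}(\overline{\mathcal{A}_{\mathcal{F}}})$.

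Finally, for part (ii) I would read both assertions off the same construction. Since $f_i\equiv 1$ on $\mathrm{cl}(B)\supseteq\overline{\mathcal{A}_{\mathcal{F}}}$, any $a$ with $f_i(a)<1$ must lie outside $\overline{\mathcal{A}_{\mathcal{F}}}$, i.e. in $\mathcal{A}_{\mathcal{F}}$, giving $\mathcal{A}_{f_i}\subseteq\mathcal{A}_{\mathcal{F}}$. For the measure convergence, note $\mathcal{A}_{\mathcal{F}}\setminus\mathcal{A}_{f_i}=\{a\in\mathcal{A}_{\mathcal{F}}:f_i(a)\ge 1\}$, on which $f_i\ge 1$; a Markov-type estimate then gives $\mu_{\mathcal{A}}(\mathcal{A}_{\mathcal{F}}\setminus\mathcal{A}_{f_i})\le\int_{\mathcal{A}_{\mathcal{F}}}f_i\,d\mu_{\mathcal{A}}=\int_{\mathcal{A}\setminus B}f_i\,d\mu_{\mathcal{A}}\to 0$ by the bound of the previous paragraph, which is exactly the claimed $\lim_{i\to\infty}\mu_{\mathcal{A}}(\mathcal{A}_{\mathcal{F}}-\mathcal{A}_{f_i})=0$.
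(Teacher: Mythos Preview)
Your proof is correct and follows essentially the same route as the paper: identify $\overline{\mathcal{A}_{\mathcal{F}}}$ as the $a$-projection of $\mathcal{K}_1\cap\overline{\mathcal{K}_2}$, invoke Urysohn's lemma to produce a sequence of continuous functions that descend to the indicator $\mathcal{I}_{\overline{\mathcal{A}_{\mathcal{F}}}}$ in $L_1$, and read off the inner approximation $\mathcal{A}_{f_i}\subseteq\mathcal{A}_{\mathcal{F}}$. The paper's proof in Appendix~B cites the same ingredients (Urysohn's lemma, $L_1$ minimization) but is considerably more terse. Your treatment is more complete in one respect worth noting: you explicitly address the gap between $B=\overline{\mathcal{A}_{\mathcal{F}}}$ and its topological closure $\mathrm{cl}(B)$ via Tarski--Seidenberg, showing that the frontier is lower-dimensional and hence Lebesgue-null, so that the continuous constraint $f\ge 1$ on $\mathrm{cl}(B)$ does not raise the infimum above $\mu_{\mathcal{A}}(B)$. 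The paper does not discuss this point at all. Your explicit Markov-type estimate for part~(ii) is likewise absent from the paper, which simply asserts the conclusion.
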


\begin{proof}
	See Appendix B.
\end{proof}
Now, to obtain an approximate of the solution of the original volume problem \eqref{intro_P1}, consider infinite LP on measures as follows. Let, $\mu_x$ be the given measure supported on $\chi$ as in problem \eqref{intro_P1} and $\mathcal{K}_1$ be the set as in \eqref{K1}. Then, consider the following problem
\begin{align}
	\mathbf{P_{f_i}^*}:=&\ \sup_{\mu ,\mu_a} \int d\mu, \label{P2}\\
	\hbox{s.t.}\quad & \mu \preccurlyeq \mu_a \times \mu_x, \label{P2_1}\subeqn\\
	&\mu_a \hbox{ is a probability measure} \label{P2_2}, \subeqn\\
	&\mathcal{A}_{f_i} = \left\{ a\in\mathcal{A}:\  f_i(a) < 1 \right\}, \label{P2_3}\subeqn\\
	&\mu_a\in \cM_+(\mathcal{A}_{f_i}),\quad \mu\in\cM_+(\mathcal K_1).  \label{P2_4}\subeqn
\end{align}
Now, following theorem establishes the equivalence of volume optimization problem in \eqref{intro_P1} and infinite LP in \eqref{P2}.

\begin{Theorem} \label{Theo 2}
	Let, $(\mu^*_a(f_i),\mu^*(f_i),\mathbf{P_{f_i}^*})$ be an optimal solution and value of the LP in \eqref{P2} for the obtained function $f_i$ and the set $\cA_{f_i}$ solving Problem \eqref{P21}. Also, assume that volume problem \eqref{intro_P1} has a unique optimal solution and value $(a^*,\mathbf{P_{vol}^*})$. As the set $\mathcal{A}_{f_i} = \left\{ a\in\mathcal{A}:\  f_i(a) < 1 \right\}$ defined in Theorem \eqref{Theo 1} converges to the set $\mathcal{A}_{\mathcal{F}}$, we have the following results:
	\begin{enumerate}[i)]
		\item The optimal value $\mathbf{P_{f_i}^*}$ converges to the $\mathbf{P_{vol}^*}$.
		\item Measures $\mu^*_a(f_i)$ and $\mu^*(f_i)$ converge to $\mu_a = \delta_{a^*}$, Dirac measure at $a^*$, and $\mu = \delta_{a^*} \times \mu_x$, respectively.
		\item Any point in the support of the measure $\mu^*_a(f_i)$, i.e., $a_i \in supp(\mu^*_a(f_i))$, converges to the $a^*$.
	\end{enumerate}
\end{Theorem}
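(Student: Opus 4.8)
The plan is to reduce the infinite LP \eqref{P2} to a finite-dimensional optimization of the volume function over the parameter set, and then to transport the set-convergence of Theorem~\ref{Theo 1} into convergence of optimal values and optimizers. Throughout write $V(a):=\mbox{vol}_{\mu_x}(\mathcal{S}_1(a))$. First I would fix a feasible probability measure $\mu_a$ on $\mathcal{A}_{f_i}$ and solve the inner maximization over $\mu$ alone: since $\mu$ ranges over $\mathcal{M}_+(\mathcal{K}_1)$ with $\mu\preccurlyeq\mu_a\times\mu_x$, Lemma~\ref{sec2:lem6} gives
\begin{equation*}
\sup_{\mu}\int d\mu=(\mu_a\times\mu_x)(\mathcal{K}_1),
\end{equation*}
the optimizer being the restriction $(\mu_a\times\mu_x)|_{\mathcal{K}_1}$. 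Slicing $\mathcal{K}_1$ over the $a$-coordinate (its $a$-section is $\mathcal{S}_1(a)$) and using Fubini, this value equals $\int_{\mathcal{A}_{f_i}}V(a)\,d\mu_a(a)$. Hence \eqref{P2} collapses to a linear program over probability measures, and since a linear functional over probability measures attains its supremum at a Dirac,
\begin{equation*}
\mathbf{P_{f_i}^*}=\sup_{\mu_a}\int V\,d\mu_a=\sup_{a\in\mathcal{A}_{f_i}}V(a).
\end{equation*}
The identical reduction applied to \eqref{intro_P1} (consistently with Theorem~\ref{Theo 20}) gives $\mathbf{P_{vol}^*}=\sup_{a\in\mathcal{A}_{\mathcal{F}}}V(a)$, with $\mathcal{A}_{\mathcal{F}}$ as in \eqref{AF}.

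For part (i), the inclusion $\mathcal{A}_{f_i}\subseteq\mathcal{A}_{\mathcal{F}}$ from Theorem~\ref{Theo 1} immediately yields $\mathbf{P_{f_i}^*}\le\mathbf{P_{vol}^*}$, so only $\liminf_i\mathbf{P_{f_i}^*}\ge\mathbf{P_{vol}^*}$ needs work. Here I would first record that $V$ is continuous on $\mathcal{A}$: for $a_k\to a$ the polynomials $\mathcal{P}_{1j}(\cdot,a_k)$ converge pointwise, so $\mathbf{1}_{\mathcal{S}_1(a_k)}\to\mathbf{1}_{\mathcal{S}_1(a)}$ off the boundary set $\{x:\mathcal{P}_{1j}(x,a)=0\}$, which carries zero $\mu_x$-mass; dominated convergence then gives $V(a_k)\to V(a)$. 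Fixing $\epsilon>0$ and the unique maximizer $a^*$, continuity makes $U:=\{a:V(a)>\mathbf{P_{vol}^*}-\epsilon\}$ an open neighborhood of $a^*$, so $U\cap\mathcal{A}_{\mathcal{F}}$ has positive $\mu_{\mathcal{A}}$-measure. Because $\mu_{\mathcal{A}}(\mathcal{A}_{\mathcal{F}}\setminus\mathcal{A}_{f_i})\to0$ by Theorem~\ref{Theo 1}, the set $U\cap\mathcal{A}_{f_i}$ is nonempty for all large $i$, whence $\mathbf{P_{f_i}^*}=\sup_{\mathcal{A}_{f_i}}V>\mathbf{P_{vol}^*}-\epsilon$, proving (i).

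For parts (ii) and (iii) I would exploit that the optimal $\mu^*_a(f_i)$ concentrates on near-maximizers. Since $\int V\,d\mu^*_a(f_i)=\mathbf{P_{f_i}^*}=\sup_{\mathcal{A}_{f_i}}V$ while $V\le\mathbf{P_{f_i}^*}$ there, equality forces $V\equiv\mathbf{P_{f_i}^*}$ on $supp(\mu^*_a(f_i))$ by continuity. Thus every $a_i\in supp(\mu^*_a(f_i))$ satisfies $V(a_i)=\mathbf{P_{f_i}^*}\to\mathbf{P_{vol}^*}=V(a^*)$; any subsequential limit $b$ of such points lies in $\overline{\mathcal{A}_{\mathcal{F}}}$ and satisfies $V(b)=V(a^*)$, so by uniqueness $b=a^*$. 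As $\mathcal{A}$ is compact, all support points converge to $a^*$, giving (iii) and, since $\mu^*_a(f_i)$ has unit mass, $\mu^*_a(f_i)\rightharpoonup\delta_{a^*}$. Finally $\mu^*(f_i)=(\mu^*_a(f_i)\times\mu_x)|_{\mathcal{K}_1}\rightharpoonup(\delta_{a^*}\times\mu_x)|_{\mathcal{K}_1}$, i.e. $\delta_{a^*}\times\mu_x$ restricted to $\mathcal{S}_1(a^*)$, which is the assertion of (ii).

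The main obstacle is the lower bound in (i): a supremum is insensitive to measure-zero modifications, whereas Theorem~\ref{Theo 1} controls $\mathcal{A}_{f_i}$ only up to a Lebesgue-null error. The argument therefore hinges on two regularity facts that must be pinned down — the $\mu_x$-continuity of $V$ (so that $U\cap\mathcal{A}_{\mathcal{F}}$ has \emph{positive} measure rather than collapsing onto the single point $a^*$), and that uniqueness of the optimizer persists on $\overline{\mathcal{A}_{\mathcal{F}}}$, so that no spurious boundary maximizer can capture the weak-$*$ limit in (ii)--(iii). Both rest on the semialgebraic structure of the sets and on Assumption~\ref{Assum}.
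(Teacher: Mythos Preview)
Your proposal is correct and follows essentially the same route as the paper: reduce \eqref{P2} via Lemma~\ref{sec2:lem6} and Fubini to $\sup_{a\in\mathcal{A}_{f_i}}V(a)$ (exactly as in Appendix~A/C), then pass from the set convergence $\mathcal{A}_{f_i}\to\mathcal{A}_{\mathcal{F}}$ of Theorem~\ref{Theo 1} to convergence of optima and optimizers. The paper's own Appendix~C is little more than a sketch --- it simply asserts that replacing $\mathcal{A}_{\mathcal{F}}$ by $\mathcal{A}_{f_i}$ and invoking set convergence gives the three conclusions --- whereas you actually supply the missing analysis (continuity of $V$, the positive-measure neighborhood argument for the $\liminf$ bound, the support-concentration argument for (ii)--(iii)) and correctly flag the regularity hypotheses on which those steps rely.
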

\begin{proof}
	See Appendix C.
\end{proof}


In the next section, we provide the tractable finite relaxation to infinite LP in \eqref{P2} and \eqref{P21}.
\subsection{Finite Semidefinite Program on Moments}

In this section, we provide a finite dimensional SDP whose feasible region is defined over real sequences. We show that the corresponding sequence of optimal solutions arbitrarily approximate the optimal solution of \eqref{P2}.
Unlike problem \eqref{P2} in which we are looking for measures, in the provided SDP formulation, we aim at finding moment sequences corresponding to measures that are optimal to \eqref{P2}. 

For this purpose, we first need to obtain the semialgebraic approximation of the set $\cA_{\cF}$ in \eqref{AF}, the set of all parameters $a\in \cA$ for which the set $\cS_1(a)$ is a subset of the set $\cS_2(a)$. In the previous section, the continuous function $f$ and the infinite LP in \eqref{P21} are used to obtain an inner approximation of the set  $\cA_{\cF}$. Here, we use polynomial $ \mathcal{P}^d_{\mathcal{A}}(a) \in \reals_d[a] $ and finite finite SDP problem below
\begin{align}
	\mathbf{P_{\mathcal{A}_{d}}^*}:=&\ \min_{\mathcal{P}^d_{\mathcal{A}}(a) \in \reals_d[a]} \int_{\mathcal{A}} \mathcal{P}^d_{\mathcal{A}}(a) d\mu_{\mathcal{A}}, \label{P21d}\\
	\ \hbox{s.t.} \  \mathcal{P}^d_{\mathcal{A}}(a)& - 1 \in \mathcal{QM}_i \left( \{\mathcal{P}_{1j}\}_{j=1}^{o_1}, -\mathcal{P}_{2i} \right), i=1,\dots ,o_2  \subeqn\label{P21d_1}\\
	\mathcal{P}^d_{\mathcal{A}}(a) & \in \mathcal{QM} \left( \{(1-x^2_i)\}_{i=1}^n, \{(1-a^2_i)\}_{i=1}^m \right).\label{P21d_2}\subeqn
\end{align}
where, $\mu_{\mathcal{A}}$ is the Lebesgue measure over the set $\mathcal{A}$ and $d$ is the order of polynomial $ \mathcal{P}^d_{\mathcal{A}}(a)$. $\mathcal{QM}_i$ and $\mathcal{QM}$ as defined in \eqref{sec1:def1} are the quadratic modules generated by polynomials of set $\cK_1 \cap \overline{\cK_2}$ \begin{footnotesize} $ =\left\lbrace (x,a): \cup_{i=1}^{o_2}\{ -\mathcal{P}_{2i} > 0 , \mathcal{P}_{1j}\geq 0, j=1,\dots ,o_1 \}  \right\rbrace, $
\end{footnotesize} and polynomials of hyper cube $\chi \times \cA$, respectively. According to the Lemma \ref{sec2:lem7}, constraints \eqref{P21d_1} and \eqref{P21d_2} imply that polynomials $(\mathcal{P}^d_{\mathcal{A}}(a) - 1)$ and  $\mathcal{P}^d_{\mathcal{A}}(a)$ are positive on the sets $\cK_1 \cap \overline{\cK_2}$ and hyper cube $\chi \times \cA$, respectively. Problem in \eqref{P21d} is a SDP, where objective function is a weighted summation of coefficients of polynomial $\mathcal{P}^d_{\mathcal{A}}(a)$ with respect to the moments of Lebesgue measure $\mu_{\mathcal{A}}$ and constraints are convex linear matrix inequalities in terms of coefficients of polynomial $\mathcal{P}^d_{\mathcal{A}}(a)$. 

The following theorem hold true for the problems \eqref{P21d}.

\begin{Theorem} \label{Theo 3}
	Let $\mathcal{P}^d_{\mathcal{A}}(a)$ be an optimal solution of SDP \eqref{P21d} and consider the set:
	\begin{equation}\label{Ad}
		\mathcal{A}_{{d}} = \left\{ a\in\mathcal{A}:\  \mathcal{P}^d_{\mathcal{A}}(a) < 1 \right\}
	\end{equation}
	Also, let $\mathcal{I}_{\overline{\mathcal{A}_\mathcal{F}}}$ be the indicator function of the set $\overline{\mathcal{A}_{\mathcal{F}}}:=\{a\in\mathcal{A}: \mathcal{S}_1(a) \not\subseteq \mathcal{S}_2(a)\}$. Then,
	
	\begin{enumerate}[i)]
		\item	The sequence of optimal solutions to the finite SDP in \eqref{P21d} converges to the $\mathcal{I}_{\overline{\mathcal{A}_\mathcal{F}}}$ in $L_1$-norm sense as $d \rightarrow \infty$, i.e., $\hbox{lim}_{d \rightarrow \infty} \int_{\cA}  \arrowvert \cP^d_{\cA}(a) - \mathcal{I}_{\overline{\mathcal{A}_\mathcal{F}}}(a) \arrowvert da = 0$.
		
		\item The set $	\mathcal{A}_{{d}}$ converges to the set $\mathcal{A}_{\mathcal{F}}$ in \eqref{AF}, i.e., $\hbox{lim}_{d \rightarrow \infty} \mu_{\cA} (\cA_{\cF} - \mathcal{A}_{d} ) = 0$, 
		and  $\mathcal{A}_{d} \subseteq \mathcal{A}_\mathcal{F}$.
	\end{enumerate}	
	
\end{Theorem}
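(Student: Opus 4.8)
The plan is to reduce both claims to the single limit $\mathbf{P_{\mathcal{A}_{d}}^*}\to\mu_{\mathcal A}(\overline{\mathcal A_{\mathcal F}})$ and read everything off from it. First I would record the pointwise domination forced by the constraints. By definition of the quadratic module, membership \eqref{P21d_1} in each $\mathcal{QM}_i$ gives $\mathcal{P}^d_{\mathcal A}(a)-1\ge 0$ on the closed set $\{(x,a):\mathcal P_{1j}\ge 0\ \forall j,\ \mathcal P_{2i}\le 0\}$; since the polynomial depends on $a$ only, projecting onto the $a$-coordinates yields $\mathcal P^d_{\mathcal A}(a)\ge 1$ on all of $\overline{\mathcal A_{\mathcal F}}$, while \eqref{P21d_2} gives $\mathcal P^d_{\mathcal A}\ge 0$ on the box $\mathcal A$. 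Hence every feasible $\mathcal P^d_{\mathcal A}$ satisfies $\mathcal P^d_{\mathcal A}\ge \mathcal I_{\overline{\mathcal A_{\mathcal F}}}$ on $\mathcal A$, so
\begin{equation*}
\int_{\mathcal A}\bigl\lvert \mathcal P^d_{\mathcal A}-\mathcal I_{\overline{\mathcal A_{\mathcal F}}}\bigr\rvert\,d\mu_{\mathcal A}=\int_{\mathcal A}\mathcal P^d_{\mathcal A}\,d\mu_{\mathcal A}-\mu_{\mathcal A}(\overline{\mathcal A_{\mathcal F}})=\mathbf{P_{\mathcal A_{d}}^*}-\mu_{\mathcal A}(\overline{\mathcal A_{\mathcal F}})\ \ge\ 0 .
\end{equation*}
In particular $\mathbf{P_{\mathcal A_{d}}^*}\ge\mu_{\mathcal A}(\overline{\mathcal A_{\mathcal F}})$ for every $d$, and statement (i) is \emph{equivalent} to $\mathbf{P_{\mathcal A_{d}}^*}\downarrow\mu_{\mathcal A}(\overline{\mathcal A_{\mathcal F}})$.

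Statement (ii) then comes for free. Since $\mathcal P^d_{\mathcal A}\ge 1$ on $\overline{\mathcal A_{\mathcal F}}$, the sublevel set $\mathcal A_d=\{\mathcal P^d_{\mathcal A}<1\}$ is disjoint from $\overline{\mathcal A_{\mathcal F}}$, i.e. $\mathcal A_d\subseteq\mathcal A_{\mathcal F}$. On $\mathcal A_{\mathcal F}\setminus\mathcal A_d=\{a\in\mathcal A_{\mathcal F}:\mathcal P^d_{\mathcal A}(a)\ge1\}$ one has $\mathcal I_{\overline{\mathcal A_{\mathcal F}}}=0$ and $\mathcal P^d_{\mathcal A}\ge1$, so the integrand above is $\ge 1$ there; a Markov-type bound gives $\mu_{\mathcal A}(\mathcal A_{\mathcal F}\setminus\mathcal A_d)\le\int_{\mathcal A}\lvert\mathcal P^d_{\mathcal A}-\mathcal I_{\overline{\mathcal A_{\mathcal F}}}\rvert\,d\mu_{\mathcal A}\to 0$, which is exactly the asserted $\mu_{\mathcal A}(\mathcal A_{\mathcal F}-\mathcal A_d)\to 0$.

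The substance is the matching upper bound $\limsup_d\mathbf{P_{\mathcal A_{d}}^*}\le\mu_{\mathcal A}(\overline{\mathcal A_{\mathcal F}})$, which I would obtain by a strictify/approximate/certify argument. Fix $\varepsilon>0$ and take, via Theorem \ref{Theo 1}, a continuous $f\ge\mathcal I_{\overline{\mathcal A_{\mathcal F}}}$ feasible for \eqref{P21} with $\int_{\mathcal A}f\,d\mu_{\mathcal A}\le\mu_{\mathcal A}(\overline{\mathcal A_{\mathcal F}})+\varepsilon$. Passing to $f+2\eta$ makes both constraints strict: $f+2\eta-1\ge 2\eta>0$ on $\mathcal K_1\cap\overline{\mathcal K_2}$ and $f+2\eta\ge2\eta>0$ on $\mathcal A$. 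By the Stone--Weierstrass theorem choose $q\in\reals[a]$ with $\sup_{\mathcal A}\lvert q-(f+2\eta)\rvert<\eta$, so $q-1>0$ and $q>0$ survive with margin $\eta$. Because $\mathcal K_1$ satisfies Putinar's property (Assumption \ref{Assum}) and the box is Putinar through its ball certificate, the sets underlying $\mathcal{QM}_i$ and $\mathcal{QM}$ inherit Putinar's property, so Lemma \ref{sec2:lem7} furnishes finite quadratic-module representations of $q-1$ and of $q$; choosing $d$ above all degrees occurring there makes $q$ feasible for \eqref{P21d}. Thus $\mathbf{P_{\mathcal A_{d}}^*}\le\int_{\mathcal A}q\,d\mu_{\mathcal A}\le\mu_{\mathcal A}(\overline{\mathcal A_{\mathcal F}})+\varepsilon+O(\eta)$, and since $\mathbf{P_{\mathcal A_{d}}^*}$ is non-increasing in $d$ (a feasible order-$d$ point is feasible at order $d+1$), letting $\eta,\varepsilon\downarrow0$ closes the gap.

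The hard part is the mismatch between the \emph{open} set $\mathcal K_1\cap\overline{\mathcal K_2}$ on which \eqref{P21} guarantees $f\ge1$ and the \emph{closed} sets $\{\mathcal P_{1j}\ge0,\ \mathcal P_{2i}\le0\}$ on which a quadratic module certifies nonnegativity: to invoke Lemma \ref{sec2:lem7} I need $q-1$ strictly positive on the closed sets, whereas continuity of $f$ only delivers $f\ge1$ on the closure of the open set, possibly omitting part of the boundary $\bigcup_i\{\mathcal P_{2i}=0\}$. I would dispose of this exactly as in the measure-theoretic arguments of \cite{Henrion2009}. The offending $a$-values lie in $\pi_a\!\bigl(\mathcal K_1\cap\bigcup_i\{\mathcal P_{2i}=0\}\bigr)\subseteq\partial\mathcal A_{\mathcal F}$, and as $\mathcal A_{\mathcal F}$ is semialgebraic (Tarski--Seidenberg) this boundary is Lebesgue-null, so it alters neither $\mu_{\mathcal A}(\overline{\mathcal A_{\mathcal F}})$ nor any integral above. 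To obtain the \emph{pointwise} strict inequality that Putinar's theorem demands, I would take the continuous approximant (by Urysohn's lemma around the compact projected set $\pi_a(\bigcup_i\{\mathcal P_{1j}\ge0,\mathcal P_{2i}\le0\})$) to equal $1$ on a full neighborhood of that compact set rather than merely on it; then $q-1>0$ holds on the closed sets with room to spare and the certification goes through. This boundary bookkeeping is the only place where more than a routine polynomial adaptation of Theorem \ref{Theo 1} is needed.
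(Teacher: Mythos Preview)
Your proposal is correct and follows essentially the same route as the paper: Stone--Weierstrass to pass from the continuous approximant of Theorem~\ref{Theo 1} to a polynomial, then Putinar's positivstellensatz (Lemma~\ref{sec2:lem7}) to certify membership in the quadratic modules, with the $L_1$-objective forcing convergence to $\mathcal I_{\overline{\mathcal A_{\mathcal F}}}$.

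The paper's own proof (Appendix~D) is a short sketch that names these ingredients and refers back to Theorem~\ref{Theo 1}, without writing out the steps you supply. Your version is considerably more explicit in three respects: (a) the clean reduction of (i) to $\mathbf{P_{\mathcal A_d}^*}\downarrow\mu_{\mathcal A}(\overline{\mathcal A_{\mathcal F}})$ via the pointwise domination $\mathcal P^d_{\mathcal A}\ge\mathcal I_{\overline{\mathcal A_{\mathcal F}}}$, together with the Markov-type bound that derives (ii) from (i); (b) the strictify-then-approximate step, which is genuinely needed because Lemma~\ref{sec2:lem7} requires \emph{strict} positivity before a finite-degree certificate exists; and (c) the open-versus-closed boundary issue for $\mathcal K_1\cap\overline{\mathcal K_2}$, which the paper handles only operationally through the $\epsilon_{\mathcal K}$ perturbation of Remark~\ref{remark2} rather than by the Urysohn/measure-zero argument you give. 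None of this changes the underlying strategy, but your write-up would serve as a rigorous expansion of the paper's sketch.
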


\begin{proof}
	See Appendix D.
\end{proof}


Given that the indicator function $\mathcal{I}_{\overline{\mathcal{A}_\mathcal{F}}}$ can be approximated by sequence of polynomials of increasing order $\cP^d_{\cA}$ as in Theorems \ref{Theo 3}, we restrict the continuous function $f_i$ in problem \eqref{P2} to be polynomials. Hence, we can approximate the optimal solution of infinite LP \eqref{P2} on measures with finite dimensional SDP on moments. In order to have tractable approximations to the infinite dimensional LP in \eqref{P2}, we consider the SDP \eqref{P2M}, known as Lasserre's hierarchy \cite{Lasserre}, where $\mathbf{y_x}:=\{y_{x_\beta}\}_{\beta\in\mathbb{N}_{2r}^n}$ and $\mathbf{y_a}=\{y_{a_\alpha}\}_{\alpha\in\mathbb{N}_{2r}^m}$ are the truncated moment sequence of measures $\mu_x$ and $\mu_a$.
\begin{align}
	\mathbf{P^*_r}:= &\sup_{\mathbf{y}\in\reals^{S_{n+m,2r}},\ \mathbf{y_a}\in\reals^{S_{m,2r}}} (\mathbf{y})_\mathbf{0},
	\label{P2M}\\
	\hbox{s.t.}\quad & M_r(\mathbf y)\succcurlyeq 0,\ M_{r-r_j}(\mathbf{y}; \mathcal{P}_{1j})\succcurlyeq 0,\ j=1,\dots ,o_1, \label{P2M_1}\subeqn\\
	& \left(\mathbf{y_a}\right)_\mathbf{0}=1, \label{P2M_2}\subeqn\\
	&M_r ({\mathbf y}_{\mathbf a})\succcurlyeq 0,\ M_{r-r_a}(\mathbf{y}_{\mathbf a}; 1-\mathcal{P}^d_{\mathcal{A}}(a))\succcurlyeq 0, \ M_{r-1}(\mathbf{y}_{\mathbf a}; 1-a^2_i )\succcurlyeq 0, i=1,...,m \label{P2M_3}\subeqn\\
	&M_r (\mathbf{y_a}\times\mathbf{y_x}-{\mathbf y})\succcurlyeq 0.\label{P2M_4}\subeqn
\end{align}
In \eqref{P2M}, $(\mathbf{y})_\mathbf{0}$ is the first element of the truncated moment sequence of measure $\mu$, $r\in\integers_+$ is relaxation order of matrices, $d_j$ is the degree of polynomial $\cP_{1j}$ in the set $\mathcal{S}_1$, $r_j:=\left\lceil\frac{d_j}{2}\right\rceil$ for all $1\leq j\leq o_1$. Sequence $\mathbf{y_a}\times\mathbf{y_x}=\mathbf{\bar{y}}$ is truncated moment sequence of measure $\mu_a \times \mu_x$ such that $(\mathbf{\bar{y}})_\theta=(\mathbf{y_a})_\alpha (\mathbf{y_x})_\beta$ for all $\theta=(\alpha,\beta)\in\mathbb{N}_{2r}^{n+m}$. Matrices $M_r ({\mathbf y})$, $M_r ({\mathbf y}_{\mathbf a})$, and $M_r (\mathbf{y_a}\times\mathbf{y_x}-{\mathbf y})$ are moment matrices constructed by moment sequences $\mathbf y$, ${\mathbf y}_{\mathbf a}$, and $\mathbf{y_a}\times\mathbf{y_x}-{\mathbf y}$, respectively. Also, $M_{r-r_j}(\mathbf{y}; \mathcal{P}_{1j}), j=1,\dots ,o_1$ and $\left\lbrace \ M_{r-1}(\mathbf{y}_{\mathbf a}; 1-a^2_i )\right\rbrace_{i=1}^m$ are localization matrices constructed by polynomials of the set $\mathcal{K}_1$ and hyper cube  $\chi \times \cA$, respectively. Finally, $M_{r-r_a}(\mathbf{y}_{\mathbf a}; 1-\mathcal{P}^d_{\mathcal{A}}(a))$ is localization matrix constructed by polynomial of the set $\cA_d$ in \eqref{Ad}, i.e., $(1-\mathcal{P}^d_{\mathcal{A}}(a))$, where $\mathcal{P}^d_{\mathcal{A}}(a)$ is an optimal solution of SDP \eqref{P21d}.



\begin{remark} \label{remark2}
	To be able to work with closed sets $\mathcal{A}_{{d}}$ in \eqref{Ad} and $\mathcal{K}_1 \cap \overline{\mathcal{K}_2}$ which are used in constraints \eqref{P2M_3} and \eqref{P21d_1}, we use positive small $\epsilon_{\mathcal{A}}, \epsilon_{\mathcal{K}} \rightarrow 0$ and also to satisfy the Putinar's property, we add the polynomial $ \sqrt{m}^2- \Vert a \Vert^2 \geq 0$, i.e., $$\mathcal{A}_{{d}} = \left\{ a\in\mathcal{A}:\  \mathcal{P}^d_{\mathcal{A}}(a) \leq 1-\epsilon_{\mathcal{A}}, \sqrt{m}^2- \Vert a \Vert^2 \geq 0 \right\}$$ and 
	$$\mathcal{K}_1 \cap \overline{\mathcal{K}_2}=\left\lbrace (x,a): \cup_{i=1}^{o_2}\{ -\mathcal{P}_{2i} \geq \epsilon_{\mathcal{K}} , \mathcal{P}_{1j}\geq 0, j=1,\dots ,o_1 \}  \right\rbrace $$
\end{remark}

\subsection{Illustrative Example}\label{SExa1}

In this section, we present a simple example of constrained volume optimization problem in \eqref{intro_P1} and show how the proposed finite SDP in \eqref{P2M} effectively works. For illustrative purposes, the provided example is low dimensional and consists of sets described by polynomials in $x \in \chi \subset \reals$ and parameter $a \in \mathcal{A} \subset \reals$. We consider the volume optimization problem in \eqref{intro_P1} with following sets
\begin{align} \label{E1_set1}
	\mathcal{S}_1(a):= \left\lbrace  x\in \chi:\  0.25 - a^2 -x^2 \geq0 \right\rbrace 
\end{align}
\begin{align} \label{E1_set2}
	\mathcal{S}_2(a):= \left\lbrace  x\in \chi:\  0.09 - a^2 - 0.8a - x^2 \geq0  \right\rbrace 
\end{align}
where, $\chi=[-1, 1]$ and given measure $\mu_x$ is the Lebesgue measure supported on $\mathcal{A}=[-1, 1]$. 

\begin{figure}[!h]
	\centering
	\includegraphics[scale=0.26]{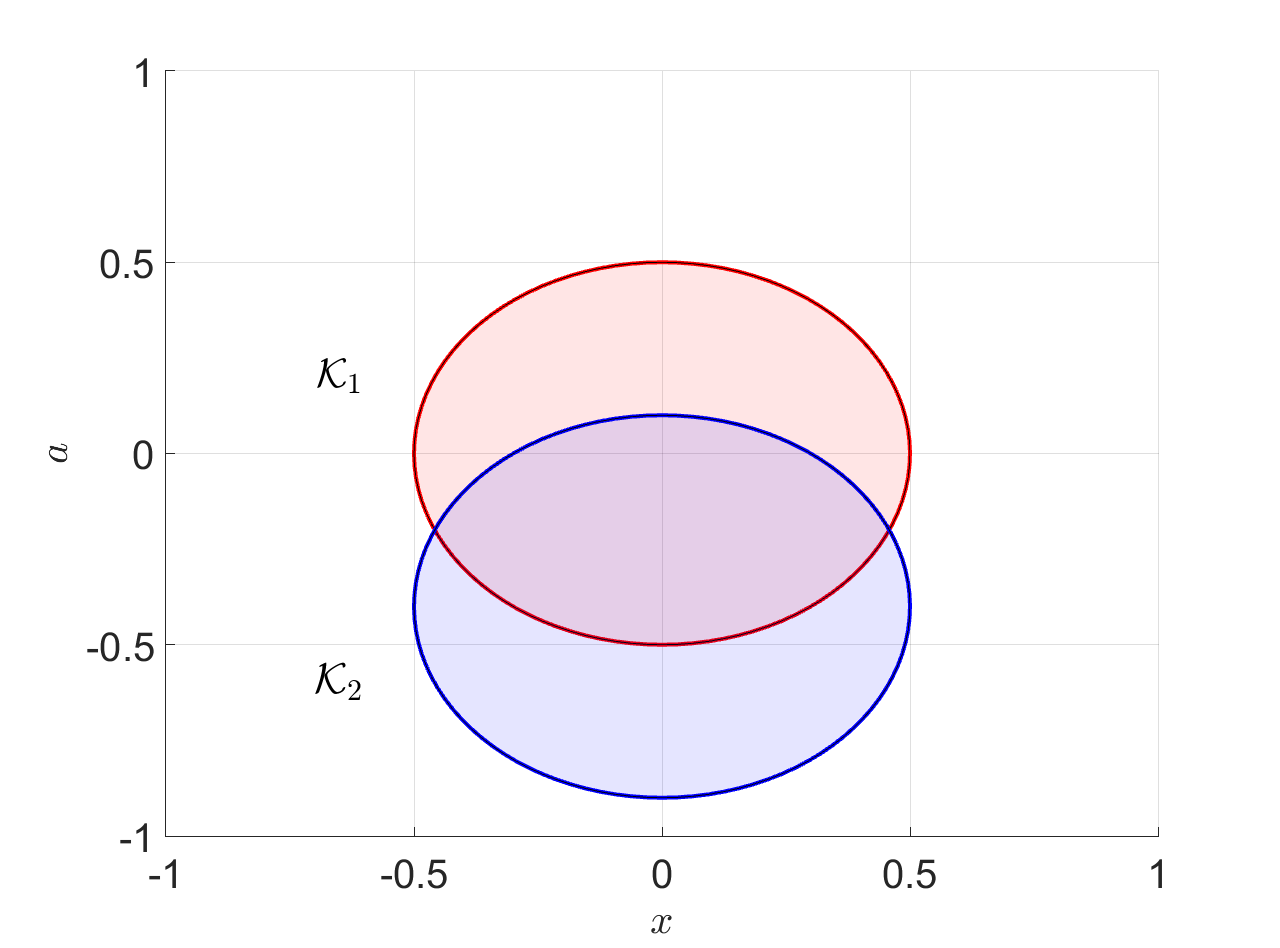}
	\caption{Sets $\cK_1$ and $\cK_2$}
	\label{fig:1} 
\end{figure}

Based on given sets $\cS_1$ and $\cS_2$, we define the sets {\small $\mathcal K_{1} :=\left\{(x,a):0.25 - a^2 -x^2\geq0 \right\}$ } and {\small $\mathcal K_{2} :=\left\{(x,a):0.09 - a^2 - 0.8a - x^2 \geq0 \right\}$}. Figure \ref{fig:1} displays the sets $\cK_1$ and $\cK_2$. To obtain an approximate solution of constrained volume optimization problem, we solve finite SDPs in \eqref{P2M} and \eqref{P21d}. First, we solve the SDP in \eqref{P21d} to obtain the polynomial $\cP^d_{\cA}(a)$. To this, we use Yalmip which is a MATLAB-based toolbox for polynomial and SOS optimization \cite{Efberg2004}. Figure \ref{fig:2} displays polynomial $\cP^d_{\cA}(a)$ obtained by SDP \eqref{P21d} for polynomial order $d=7$. As constraints of SDP \eqref{P21d}, $\cP^d_{\cA}(a)$ is greater than 1 on $\mathcal K_{1} \cap \overline{\mathcal K_{2}}$ 
and is positive on $\chi \times \cA = [-1, 1]^2$. Hence, based on Theorem \ref{Theo 3} the set {\small $\mathcal{A}_{d} = \left\{ a\in \reals:\  \mathcal{P}^7_{\mathcal{A}}(a) \leq 1-\epsilon_{\cA}, 1-a^2 \geq 0, \epsilon_{\cA} = 0.05 \right\}$} is an inner approximation of the set $\cA_{\cF}$, the set of all parameter $a \in \cA$ that set $\cS_1$ is subset of $\cS_2$. Clearly, based on the Figure \ref{fig:1}, $\cA_{\cF} = (-1 \leq a \leq -0.2) \cup (0.5 \leq a \leq 1)$, where $\cS_1(a) \subseteq \cS_2(a) $ for $a \in [-0.5 ,-0.2]$, $\cS_1(a) = \varnothing \subseteq \cS_2(a) $ for $a \in [-0.9,-0.5)$, and $\cS_1(a) = \cS_2(a)=\varnothing $ for $a \in [-1,-0.9)\cup (0.5,1]$. Figure \ref{figN:3} displays polynomial $\cP^d_{\cA}(a)$ obtained by SDP \eqref{P21d} for different polynomial orders $d=2,4,6,7$ and also the sets $\cA_{d}$.

\begin{figure}
	\centering
	\includegraphics[scale=0.26]{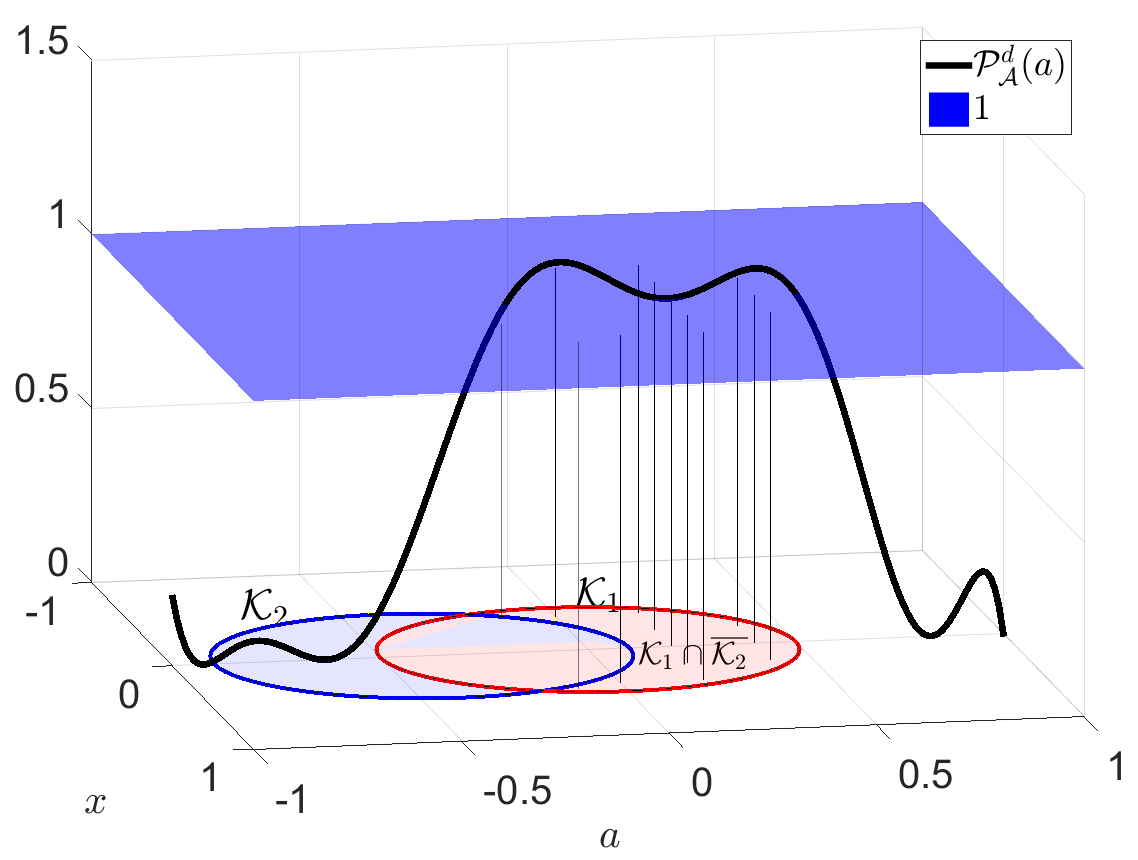}
	\caption{Polynomial $\cP^d_{\cA}(a)$ obtained by SDP \eqref{P21d} for $d=7$  }
	\label{fig:2} 
\end{figure}

\begin{figure}[!h]
	\centering
	\includegraphics[scale=0.26]{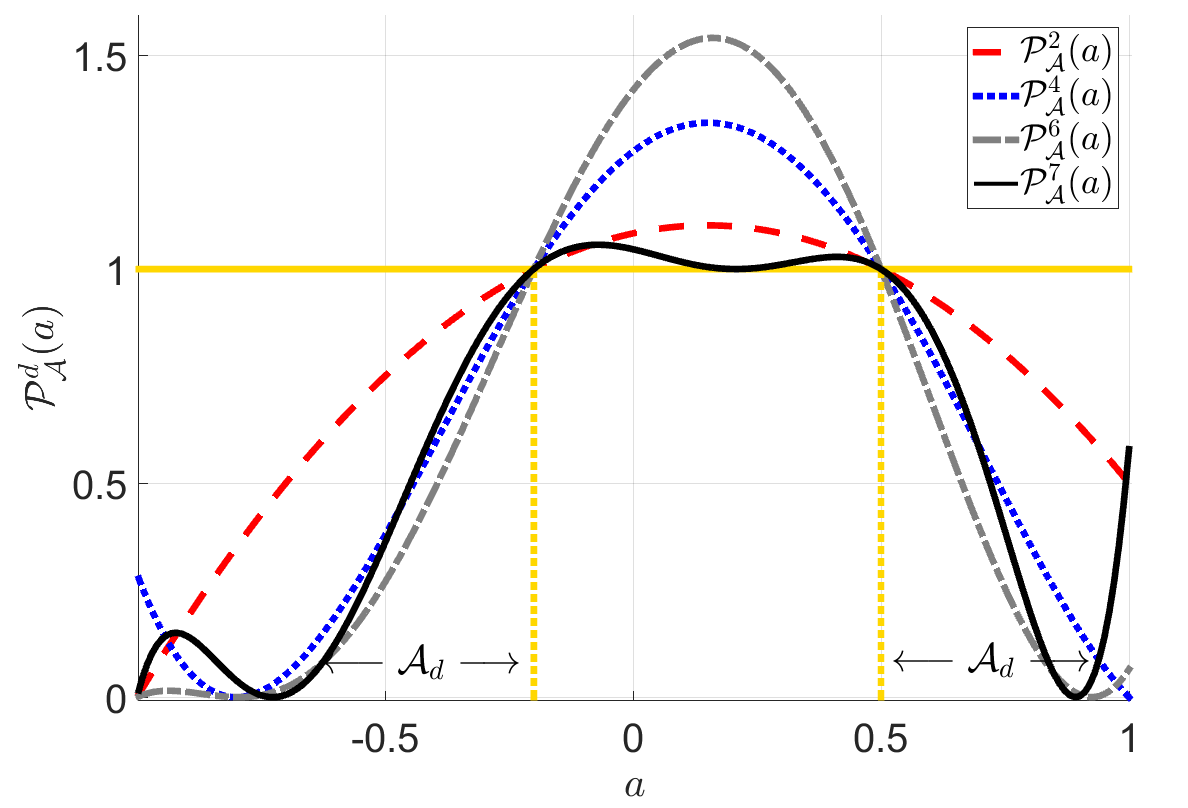}
	\caption{Polynomial $\cP^d_{\cA}(a)$ obtained by SDP \eqref{P21d} for $d=2,4,6,7$  }
	\label{figN:3} 
\end{figure}

We take $\cP^7_{\cA}(a)$ and solve SDP in \eqref{P2M}. Based on moments of Lebesgue measure $\mu_x$ on $\mathcal{A}=[-1, 1]$ as $(y_x)_{\alpha} = \frac{1}{(\alpha+1)}(1^{\alpha+1} - (-1)^{\alpha+1})$, we construct the matrices in constraints of SDP \eqref{P2M} in terms of unknown moment vectors $\mathbf{y}\in\reals^{S_{2,2r}}$ and $ \mathbf{y_a}\in\reals^{S_{1,2r}}$. Since the order of maximum degree of polynomials in $\cS_1$ and $\cA_d$ is 7, the minimum relaxation order for SDP \eqref{P2M} is $r=4$, which requires the moments up to order 8. The SDP in \eqref{P2M} with $r=6$ is solved using GloptiPoly. Based on obtained solution for moment vectors, we approximate the solution to volume problem $a$ with ${y_a}_1 = -0.2050$ and estimate the optimal volume $\mathbf{P_{vol}^*}$ with $\mathbf{P_r} = y_{00} = 1.239$. Clearly, for obtained $a =-0.2050$, the set $\cS_1 = \{ x :    0.4561^2 - x^2 \geq 0\}$ is a subset of $\cS_2 = \{ x :  0.4604^2 - x^2 \geq 0\}$.  Based on Figure \ref{fig:1}, the true solution for the volume optimization problem is $a^*=-0.2$ with volume $\mathbf{P_{vol}^*} = 0.9165$. To obtain better estimates of the optimum volume, one needs to increase the relaxation order r. Also, see (\cite{Jasour2015}, section 3.3), where we provided some methods to improve the estimated volume of the semialgebraic sets in the similar setup.

\section{Dual Convex Problem on Function Space}\label{sec:Dual}

In this section, we provide an infinite LP on continuous functions which is dual to the infinite LP on measure in \eqref{P2}.
The provided dual problem gives a new insight on solving the volume problem. Also, from computational efficiency perspective 
we can take advantage of polynomial convex optimization techniques. For instance, to handle large scale SDPs, one can employ DSOS optimization technique where relies on linear and second order cone programming (\cite{Ahmadi2014,Ahmadi2016}).

To obtain a dual problem to the infinite LP in \eqref{P2}, let $\mathcal{C(\chi \times \mathcal{A})}$ be the Banach space of continuous functions on $\chi \times \mathcal{A}$. Then, Lagrangian dual of \eqref{P2} is:
\begin{align}\label{PD}
	\mathbf{P_{Dual}^*}:=&\ \inf_{\beta \in \reals,\mathcal{W} \in \mathcal{C(\chi \times \mathcal{A})}} \beta, \\
	&\hbox{s.t.}\quad \mathcal{W}(x,a) \geq 1 \quad \hbox{on} \quad \mathcal{K}_1, \label{PD_1}\subeqn\\
	&\beta - \int_\chi \mathcal{W}(x,a) d\mu_x \geq 0 \quad \hbox{on} \quad \mathcal{A}_{f_i}, \label{PD_2}\subeqn \\
	&\mathcal{W}(x,a) \ge 0, \ \beta \geq 0. \subeqn \label{PD_3} 
\end{align}
where, $\mathcal{K}_1$ is defined as \eqref{K1}, $\mu_x$ is a given Borel measure, and $\mathcal{A}_{f_i}$ is a set defined in \eqref{P2_3}. We can interpret the obtained dual problem as follow. If we assume that $a$ is given, then the optimal solution for $\mathcal{W}(x,a)$ is the indicator function of the set $\cK_1$ and the optimal value $\mathbf{P_{Dual}^*}$ is the volume of the set $\cK_1$, i.e., $\mathbf{P_{Dual}^*} = \beta = \int_\chi \mathcal{W}(x,a) d\mu_x$. Otherwise, $\int_\chi \mathcal{W}(x,a)d\mu_x$ is an upper bound for the volume of the set $\cK_1$.

The following theorem establish the equivalence of problems in \eqref{P2} and \eqref{PD}.
\begin{Theorem}\label{Theo 5}
	There is no duality gap between the infinite LP on measure in \eqref{P2} and infinite LP on continuous function in \eqref{PD} in the sense that the optimal values are the same, i.e., $\mathbf{P^*_{f_i}} = \mathbf{P^*_{Dual}}$
\end{Theorem}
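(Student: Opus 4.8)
The plan is to recognize \eqref{P2} and \eqref{PD} as a primal--dual pair of infinite-dimensional linear programs in the conic form \eqref{LP}--\eqref{LPD}, and then to discharge the theorem by verifying the hypotheses of the strong-duality Theorem \ref{sec2:Theo1}. First I would fix the dual pairing: by the Riesz Representation Theorem the cone $\cM_+$ of nonnegative Borel measures is in duality with the cone $\mathcal{C}_+$ of nonnegative continuous functions through $\langle \mu, f \rangle = \int f\, d\mu$. In this language the primal decision variable is the pair $(\mu,\mu_a)$ ranging over the product cone $\cM_+(\cK_1)\times\cM_+(\mathcal{A}_{f_i})$, the objective functional is $\langle \mu, 1\rangle$, and the constraints \eqref{P2_1}--\eqref{P2_2} are encoded by the linear operator $A(\mu,\mu_a) = \bigl(\mu - \mu_a\times\mu_x,\ \int d\mu_a\bigr)$, with $\mu_x$ fixed so that $\mu_a \mapsto \mu_a\times\mu_x$ is linear. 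Pairing the constraint residual against the dual block $(\mathcal{W},\beta)$ and forming the Lagrangian gives
\[
L(\mu,\mu_a;\mathcal{W},\beta) = \int (1-\mathcal{W})\,d\mu + \int_{\mathcal{A}}\left(\int_\chi \mathcal{W}(x,a)\,d\mu_x - \beta\right)d\mu_a + \beta .
\]
Taking the supremum over $\mu,\mu_a\succcurlyeq 0$ forces exactly $\mathcal{W}\geq 1$ on $\cK_1$ (constraint \eqref{PD_1}) and $\beta\geq \int_\chi \mathcal{W}\,d\mu_x$ on $\mathcal{A}_{f_i}$ (constraint \eqref{PD_2}); on the resulting feasible region the inner supremum equals $\beta$, reproducing \eqref{PD} and giving weak duality $\mathbf{P^*_{f_i}}\leq\mathbf{P^*_{Dual}}$ immediately. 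The content of the theorem is therefore the \emph{absence} of a gap.

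To invoke Theorem \ref{sec2:Theo1} I must check finiteness of the primal value and closedness of the constraint graph. Finiteness is easy: since $\mu \preccurlyeq \mu_a\times\mu_x$ and $\mu_a$ is a probability measure on $\mathcal{A}_{f_i}$, one has $\mathbf{P^*_{f_i}} = \int d\mu \leq \int d(\mu_a\times\mu_x) = \mu_a(\mathcal{A}_{f_i})\,\mu_x(\chi) = \mu_x(\chi)<\infty$, because $\mu_x$ is a finite measure on the compact hypercube $\chi=[-1,1]^n$ from Assumption \ref{Assum}.

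The genuine obstacle is the closedness of the set $\bigl\{\bigl(A(\mu,\mu_a),\,\langle\mu,1\rangle\bigr) : \mu,\mu_a\succcurlyeq 0,\ (\mu,\mu_a)\ \text{feasible}\bigr\}$ required by Theorem \ref{sec2:Theo1}. Here I would exploit compactness: under Assumption \ref{Assum} the supports $\cK_1$ and $\mathcal{A}_{f_i}$ are compact, and every feasible pair has uniformly bounded total mass, namely $\mu_a(\mathcal{A}_{f_i})=1$ and $\mu(\cK_1)\leq\mu_x(\chi)$. By the Banach--Alaoglu theorem the feasible set of pairs is then weak-$*$ compact, the supremum in \eqref{P2} is attained, and since $A$ and the objective are weak-$*$ continuous linear maps the image set above is the continuous image of a compact set, hence compact and therefore closed. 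The one point needing care in this step is that the order constraint $\mu\preccurlyeq\mu_a\times\mu_x$ survives weak-$*$ limits; this is true because it is equivalent to $\int\mathcal{W}\,d\mu\leq\int\mathcal{W}\,d(\mu_a\times\mu_x)$ for every $\mathcal{W}\in\mathcal{C}_+$, a condition preserved under weak-$*$ convergence of measures on compact sets. With finiteness and closedness established, Theorem \ref{sec2:Theo1} applies and yields $\mathbf{P^*_{f_i}}=\mathbf{P^*_{Dual}}$, completing the argument.
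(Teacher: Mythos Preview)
Your approach is essentially identical to the paper's: both cast \eqref{P2} and \eqref{PD} as a conic primal--dual pair via the Riesz pairing, identify the linear operator $A(\mu,\mu_a)=(\mu-\mu_a\times\mu_x,\int d\mu_a)$, and then discharge strong duality through Theorem~\ref{sec2:Theo1} by checking finiteness (from $\mu_a$ being a probability measure and $\mu_x$ finite) and closedness via Banach--Alaoglu on compactly supported measures. Your explicit Lagrangian computation is a presentational variant of the paper's adjoint-operator derivation.

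One small technical slip: Theorem~\ref{sec2:Theo1} asks for closedness of $\{(A\gamma,\langle\gamma,c\rangle):\gamma\succcurlyeq 0\}$ over the \emph{entire} cone, not only over feasible pairs, so the uniform mass bound cannot be borrowed directly from feasibility. The correct fix---which the paper also leaves implicit, citing \cite{Henrion2014}---is that along any sequence with $(A\gamma_k,\langle\gamma_k,c\rangle)$ convergent, the components $\int d\mu_a^k$ and $\int d\mu^k$ are themselves convergent real sequences and hence eventually bounded, after which your Banach--Alaoglu/weak-$*$ continuity argument goes through unchanged.
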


\begin{proof}
	See Appendix E.
\end{proof}

To be able to obatin a tractable relaxation of infinite LP in \eqref{PD}, we use polynomial approximation of continuous function $\mathcal{W}$ and use SOS relaxation to satisfy the nonnegativity constraints, where results in following finite SDP on polynomials:
\begin{align}\label{PD2F}
	\mathbf{P_d^*}:=&\ \min_{\beta \in \reals,\mathcal{P}^d_{\mathcal{W}} \in \reals_d[x,a]} \beta, \\
	&\hbox{s.t.}\quad \mathcal{P}^d_{\mathcal{W}}(x,a)-1 \in \mathcal{QM} \left( \{\mathcal{P}_{1j}\}_{j=1}^{o_1} \right), \label{PD2F_1}\subeqn\\
	&\beta - \int_\chi \mathcal{P}^d_{\mathcal{W}}(x,a) d\mu_x \in \mathcal{QM} \left(\{1-\mathcal{P}^d_{\mathcal{A}}(a)\},\{(1-a^2_i)\}_{i=1}^m \right), \label{PD2F_2}\subeqn \\
	&\mathcal{P}^d_{\mathcal{W}}(x,a) \geq 0, \ \beta \geq 0. \subeqn \label{PD2F_3} 
\end{align}
where, $\mathcal{P}^d_{\mathcal{W}}(x,a) \in \reals_d[x,a]$, $\mu_x$ is a given finite Borel measure and $\cQ \cM$ defined in \eqref{sec1:def1} is quadratic module generated by polynomials. According to the Lemma \ref{sec2:lem7}, constraints \eqref{PD2F_1} and \eqref{PD2F_2} imply that polynomials $\mathcal{P}^d_{\mathcal{W}}(x,a)-1$ and $\beta - \int_\chi \mathcal{P}^d_{\mathcal{W}}(x,a) d\mu_x$ are positive on the sets $\cK_1$ in \eqref{K1} and $\cA_d=\{ a \in \cA : 1-\mathcal{P}^d_{\mathcal{A}}(a) > 0 \}$ in \eqref{Ad}, respectively, where $\mathcal{P}^d_{\mathcal{A}}(a)$ is an optimal solution of SDP \eqref{P21d}. Problem in \eqref{PD2F} is a SDP, where objective function is a linear and constraints are convex linear matrix inequalities in terms of coefficients of polynomial $\mathcal{P}^d_{\mathcal{W}}$. To be able to work with closed set $\mathcal{A}_{{d}}$, see the Remark \ref{remark2}.

The following theorem establish the equivalence of problems in \eqref{P2M} and \eqref{PD2F}.

\begin{Theorem}\label{Theo 6}
	There is no duality gap between the finite SDP on moments in \eqref{P2M} and finite SDP on polynomials in \eqref{PD2F} in the sense that the optimal values are the same, i.e., $\mathbf{P^*_r} = \mathbf{P^*_d}.$
\end{Theorem}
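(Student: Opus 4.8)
The plan is to recognize \eqref{P2M} and \eqref{PD2F} as a finite-dimensional primal--dual pair of semidefinite programs and then invoke the SDP strong-duality result of Theorem \ref{sec2:Theo2}, exactly mirroring the way Theorem \ref{Theo 5} uses LP strong duality (Theorem \ref{sec2:Theo1}) in the infinite-dimensional setting. First I would cast \eqref{PD2F} in the standard primal form \eqref{SDP}: each quadratic-module membership in \eqref{PD2F_1}--\eqref{PD2F_2} is, by the SOS definition and the Gram-matrix characterization of $\mathbb{S}^2[x]$, equivalent to the existence of positive semidefinite Gram matrices for the multipliers $s_0,\dots,s_{o_1}$, so the decision data $(\beta,\mathcal{P}^d_{\mathcal{W}})$ together with these Gram matrices become a block variable $X\succcurlyeq 0$ subject to affine equality constraints (coefficient matching), with the linear objective $\beta$. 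Dually, \eqref{P2M} is an LMI in the moment variables $\mathbf{y},\mathbf{y_a}$ (with the given moments $\mathbf{y_x}$ fixed as data), which is precisely the dual form \eqref{SDPD}.

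Next I would verify that the dual of \eqref{PD2F} computed by the recipe \eqref{SDP}--\eqref{SDPD} is, term by term, \eqref{P2M}, using the bilinear pairing $\langle \mathbf{p}, \mathbf{y}\rangle = L_{\mathbf y}(\mathcal{P})$. The dual variable attached to the PSD constraint on $M_r(\mathbf y)$ in \eqref{P2M_1} produces the SOS multiplier $s_0$ of \eqref{PD2F_1}, each localizing matrix $M_{r-r_j}(\mathbf y;\mathcal{P}_{1j})$ produces the multiplier $s_j$ on $\mathcal{P}_{1j}$, and the $\mathbf{y_a}$-block constraints \eqref{P2M_3} produce the quadratic module over $\{1-\mathcal{P}^d_{\mathcal{A}},\,1-a_i^2\}$ in \eqref{PD2F_2}. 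The delicate bookkeeping is the coupling constraint $M_r(\mathbf{y_a}\times\mathbf{y_x}-\mathbf y)\succcurlyeq 0$ of \eqref{P2M_4}, which encodes $\mu\preccurlyeq\mu_a\times\mu_x$; I expect its dual variable to be exactly the polynomial test function $\mathcal{P}^d_{\mathcal{W}}$ whose pairing against $\mathbf{y_a}\times\mathbf{y_x}$ yields the integrated term $\int_\chi\mathcal{P}^d_{\mathcal{W}}\,d\mu_x$ in \eqref{PD2F_2} and against $\mathbf y$ yields the requirement $\mathcal{P}^d_{\mathcal{W}}\geq 1$ of \eqref{PD2F_1}, while $\beta$ is the scalar dual of the normalization $(\mathbf{y_a})_{\mathbf 0}=1$ in \eqref{P2M_2}.

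Finally, to rule out a duality gap I would exhibit a Slater point and apply Theorem \ref{sec2:Theo2}. On the moment side this follows from Assumption \ref{Assum}: since $\cK_1$ and the set $\mathcal{A}_{d}$ of \eqref{Ad} (after the $\epsilon$-inflation of Remark \ref{remark2}) have nonempty interior, choosing $\mu_a$ with a strictly positive density on $\mathcal{A}_{d}$ and $\mu$ with a strictly positive density on $\cK_1$ that is strictly dominated by $\mu_a\times\mu_x$ renders $M_r(\mathbf y)$, the localizing matrices, and $M_r(\mathbf{y_a}\times\mathbf{y_x}-\mathbf y)$ all positive definite, i.e.\ strictly feasible; equivalently, on the SOS side one may take $\beta$ large and $\mathcal{P}^d_{\mathcal{W}}$ a large constant so that \eqref{PD2F_1}--\eqref{PD2F_2} hold with strictly positive definite Gram matrices. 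Either strictly interior point lets Theorem \ref{sec2:Theo2} close the gap, giving $\mathbf{P^*_r}=\mathbf{P^*_d}$. The main obstacle is the dualization of the product constraint \eqref{P2M_4}: one must confirm that, because $\mathbf{y_x}$ enters as fixed data rather than as a variable, the constraint stays affine in $(\mathbf y,\mathbf{y_a})$ and its dual multiplier assembles into a \emph{single} polynomial $\mathcal{P}^d_{\mathcal{W}}$ rather than two unrelated functions, and then to check that the perturbations $\epsilon_{\mathcal{A}},\epsilon_{\mathcal{K}}$ of Remark \ref{remark2} preserve strict feasibility.
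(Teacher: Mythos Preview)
Your proposal is correct and follows essentially the same route as the paper: both cast \eqref{P2M} and \eqref{PD2F} as a primal--dual SDP pair (the paper rewrites the moment LMIs of \eqref{P2M} in standard form via real symmetric matrices $A_\alpha,B^j_\alpha,\dots$ and then derives \eqref{PD2F} as its Lagrangian dual, whereas you start from the SOS side, but the pairing is identical), and both close the gap by exhibiting a strictly feasible moment point---the paper takes $\mu_a$ uniform on $\mathcal{A}_d$ and $\mu$ the restriction of $\mu_a\times\mu_x$ to $\cK_1$, invoking Assumption~\ref{Assum} that $(\chi\times\mathcal{A})\setminus\cK_1$ has nonempty interior to get $M_r(\mathbf{y_a}\times\mathbf{y_x}-\mathbf{y})\succ 0$, which is exactly your Slater argument.
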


\begin{proof}
	See Appendix F. 
\end{proof}

\begin{figure}[!h]
	\centering
	\includegraphics[scale=0.26]{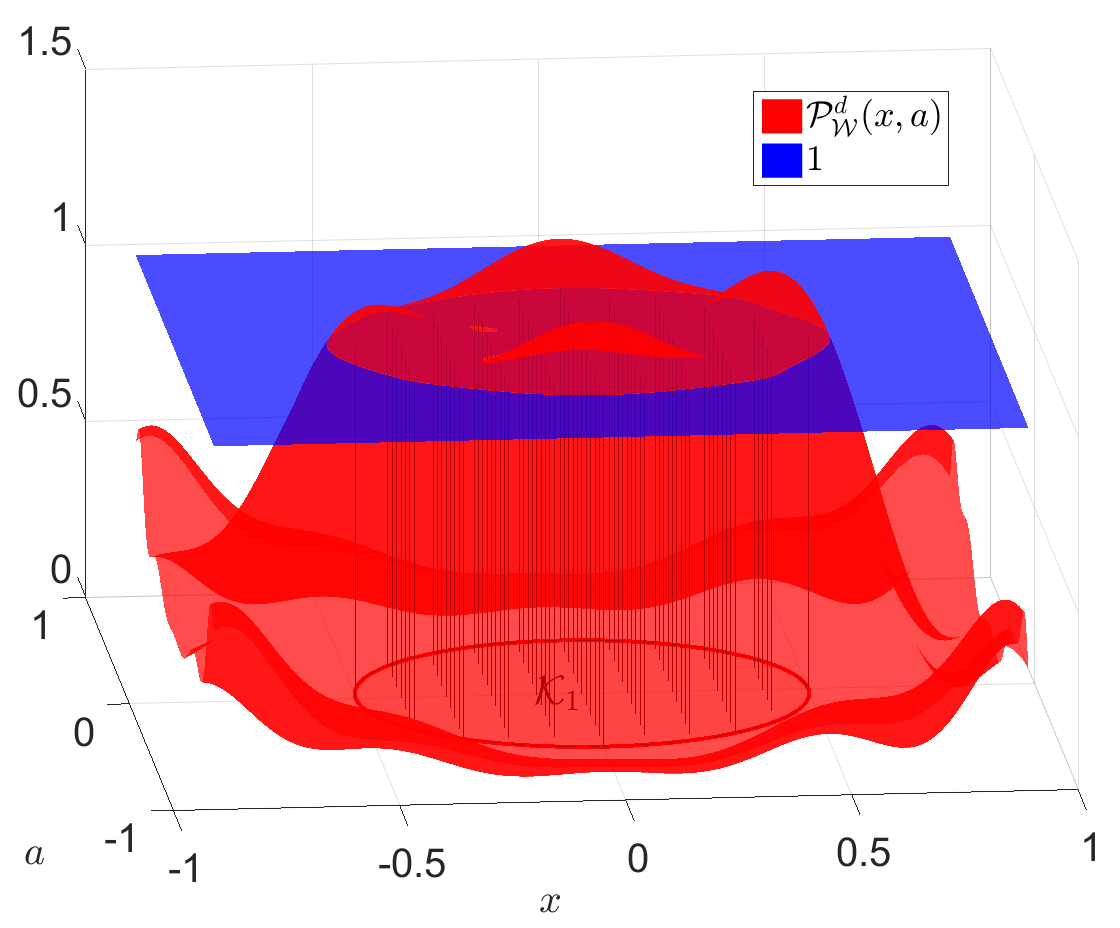}
	\caption{Polynomial $\cP^d_{\cW}(x,a)$ obtained by SDP \eqref{PD2F} for $d=12$  }
	\label{fig:3} 
\end{figure}

\begin{remark} \label{remark3}
	In low dimensional problems, we can replace the global positivity condition in \eqref{PD2F_3} with local constraint as $\{ \mathcal{P}^d_{\mathcal{W}}(x,a) \geq 0 \ \ \hbox{on} \ \  \chi \times \cA \}$ to improve the obtained results.
\end{remark}

\subsection{Illustrative Example}
Consider the simple example provided in section \ref{SExa1}. Here, to obtain an approximate solution, we solve the dual problem provided in finite SDP \eqref{PD2F}. We take $\cP^7_{\cA}(a)$ obtained by solving \eqref{P21d} and solve SDP in \eqref{PD2F} for polynomial order $d=12$ by Yalmip. Figure \ref{fig:3} displays obtained $\cP^d_{\cW}(x,a)$ which is greater than 1 on the set $\cK_1$ and is positive on $\chi \times \cA = [-1, 1]^2$ as in constraint \eqref{PD2F_1}. Figure \ref{fig:4} displays obtained $\beta$ and also {\small $\int_{\chi} \cP^d_{\cW}(x,a)d\mu_x$}. As in constraint \eqref{PD2F_2} $\beta$ is greater than {\small $\int_{\chi} \cP^d_{\cW}(x,a)d\mu_x$} on the set $\cA_d=\{ a\in \cA: \cP^7_{\cA}(a) < 1 \}$. Based on obtained $\beta$ and {\small $\cP^{12}_{\cW}(x,a)$}, we approximate the solution to the volume optimization problem with $a = -0.2050$ that maximizes polynomial  $\int_{\chi} \cP^d_{\cW}(x,a)d\mu_x$ on the $\cA_d$ and estimate the optimal volume $\mathbf{P_{vol}^*}$ with $\mathbf{P_d} = \beta = 1.239$. Based on the Theorem \ref{Theo 6}, the obtained solution by solving dual SDP in \eqref{PD2F} matches the solution obtained by SDP in \eqref{P2M}.

\begin{figure}[!h]
	\centering
	\includegraphics[scale=0.26]{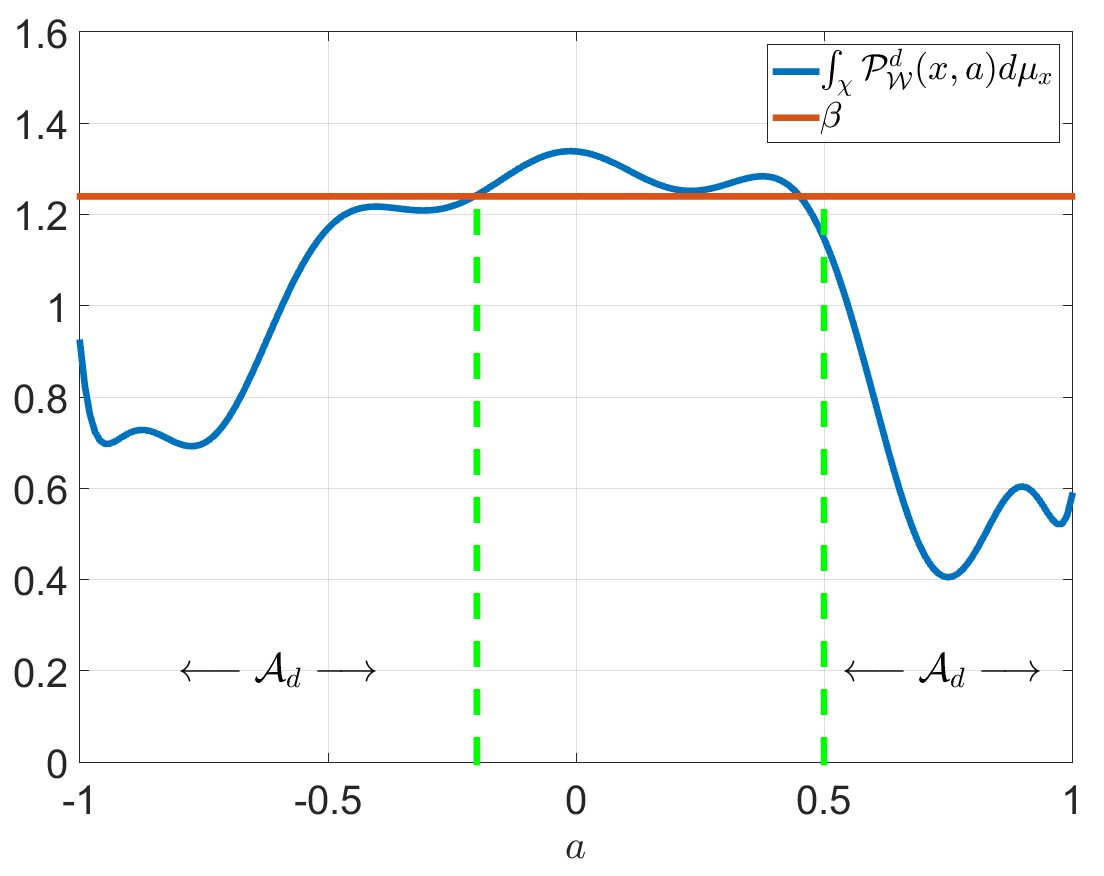}
	\caption{$\beta$ and $\int_{\chi} \cP^d_{\cW}(x,a)d\mu_x$ obtained by SDP \eqref{PD2F} for $d=12$  }
	\label{fig:4} 
\end{figure}

\section{Implementation and Numerical Results}\label{sec:Res}

In this section, numerical examples are presented that illustrate the performance of proposed method. The presented example are problem of inner approximation of ROA set defined in section \ref{App_Lya} and problem of probabilistic control defined in \ref{App_PC}.

\subsection{Example 1: ROA set of system}\label{Exa1}
In this example, we address the problem of approximating ROA set defined in \ref{App_Lya}. Consider the following locally stable nonlinear system.

\begin{equation}\label{E1}
	\begin{aligned}
		\dot{x}_1 =& -x_2\\ 
		\dot{x}_2 =&\ x_1 + (4x_1^2-1)x_2 
	\end{aligned}
\end{equation}
where, states of the system $x \in \chi = [-1, 1]^2 $. To approximate the ROA set of the system in the unit box, the Lyapunov function is described as 
\begin{equation}
	V(x) = 3\lVert x  \lVert_2^2 + 3a_1x_1x_2 + 3a_2x_1^3x_2 + 3a_3x_1x_2^3
\end{equation}
where $a=[a_1, a_2, a_3] \in \mathcal{A}=[-1, 1]^3 $ is the vector of unknown coefficients. The equivalent constrained volume optimization problem is stated as \eqref{Lya}. To obtain an approximate solution, we solve finite SDPs in \eqref{P2M} and \eqref{P21d}. First, we solve the SDP in \eqref{P21d} to obtain the polynomial $\cP^d_{\cA}(a)$ for $d=10$. The polynomials describing the sets $\cK_1$ and $\cK_2$ are: 
\begin{equation}
	\mathcal{P}_{11} = V(x,a), \ \ \mathcal{P}_{12} = 1-V(x,a)
\end{equation}

\begin{equation}
	\mathcal{P}_{21} = -\epsilon_r\Vert x \Vert_2^2 -\dfrac{\partial V(x,a)}{\partial x_1}\dot{x}_1- \dfrac{\partial V(x,a)}{\partial x_1} \dot{x}_2
\end{equation}
We set $\epsilon_r$ to 0.001 and $\epsilon_{\mathcal{K}}$ and $\epsilon_{\mathcal{A}}$ as in Remark \ref{remark2} to 0.1 and 0.02, respectively. 
Figure \ref{fig:6} shows the obtained set {\small $ \left\{ (a_1,a_2,a_3):\  \mathcal{P}^{10}_{\mathcal{A}}(a) \leq 1-\epsilon_{\mathcal{A}} \right\}$}. Based on Theorem \ref{Theo 3}, the set  {\small $\mathcal{A}_{d} = \left\{ (a_1,a_2,a_3):\  \mathcal{P}^{10}_{\mathcal{A}}(a) \leq 1-\epsilon_{\mathcal{A}}, \{1-a_i^2 \geq 0\}_{i=1}^3 \right\}$} is an inner approximation of the set of all coefficients $(a_1,a_2,a_3)$ for which the set $\{ x\in \chi :\ 0 \leq V(x,a) \leq 1 \} $ is subset of  the set $ \{x\in \chi : \dot{V}(x,a) \leq - \epsilon_r \Vert x \Vert_2^2 \} $.

\begin{figure}[!h]
	\centering
	\includegraphics[scale=0.26]{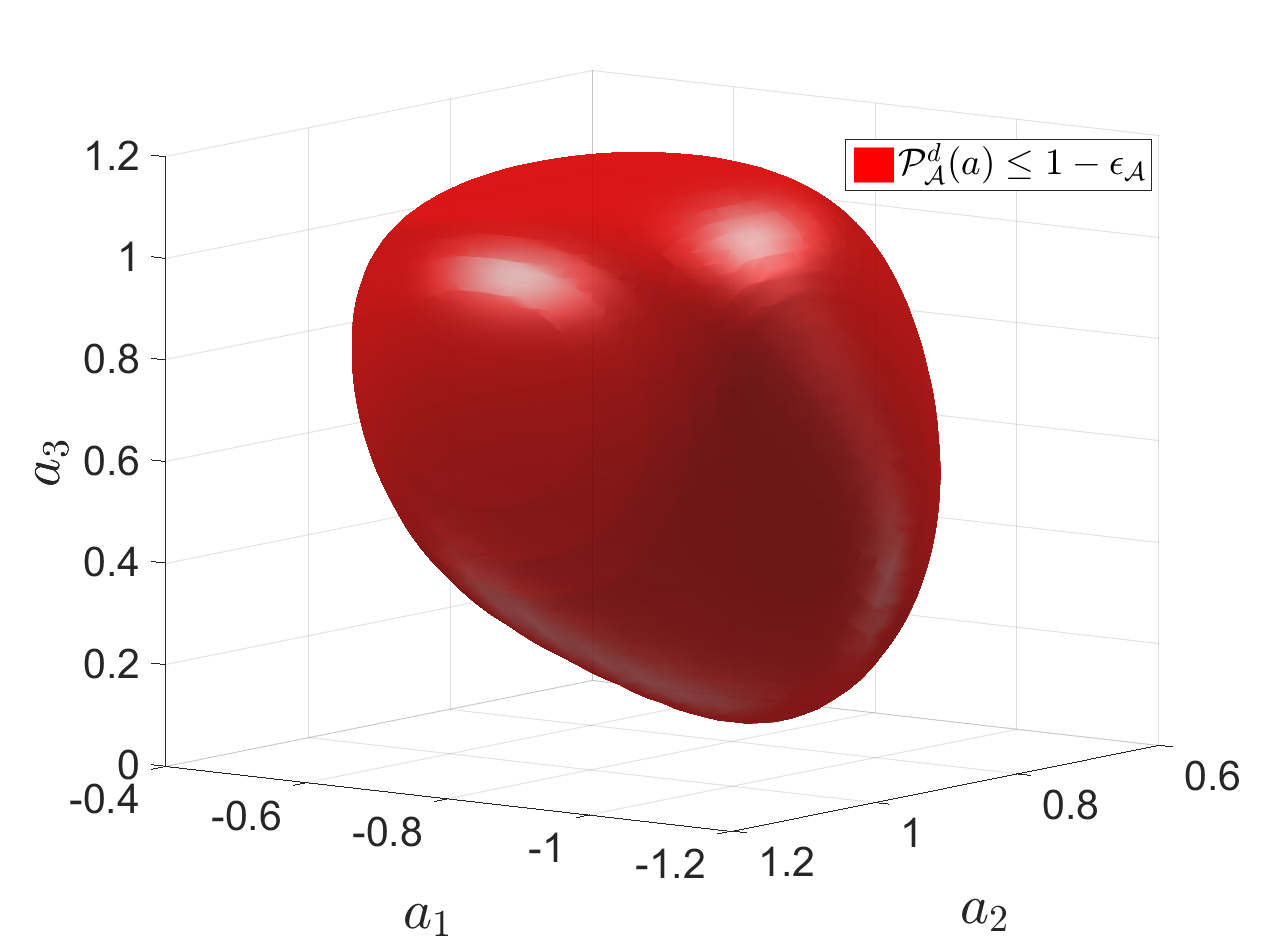}
	\caption{The set $\left\{ (a_1,a_2,a_3):\  \mathcal{P}^{d}_{\mathcal{A}}(a) \leq 1-\epsilon_{\mathcal{A}} \right\}$ obtained by SDP \eqref{P21d} for $d=10$ and $\epsilon_{\mathcal{A}}=0.02$ }
	\label{fig:6} 
\end{figure}

We take $\mathcal{A}_{10}$ and solve SDP in \eqref{P2M}. Based on moments of Lebesgue measure $\mu_x$ on $\chi=[-1, 1]^2$ we construct the matrices in constraints of SDP \eqref{P2M} in terms of unknown moment vectors $\mathbf{y}\in\reals^{S_{5,2r}}$ and $ \mathbf{y_a}\in\reals^{S_{3,2r}}$. The SDP in \eqref{P2M} with $r=7$ is solved using GloptiPoly. Based on obtained solution for moment vectors, we approximate the $(a_1,a_2,a_3)$ with the first order moments of vector $y_a$ as $(y_{a_{100}},y_{a_{010}},y_{a_{001}}) = (-0.999362,0.853458,0.132566)$. Figure \ref{fig:7} shows the sets $\mathcal{S}_1(a) = \{ x\in \chi :\ 0 \leq V(x,a) \leq 1 \} $ and $\mathcal{S}_2(a) = \{x\in \chi : \dot{V}(x,a) \leq - \epsilon_r \Vert x \Vert_2^2 \} $ for obtained coefficients $a$. For obtained $a$, the set $\mathcal{S}_1(a)$ is subset of the set $\mathcal{S}_2(a)$; hence, is an inner approximation of the ROA set.

\begin{figure}[!h]
	\centering
	\includegraphics[scale=0.26]{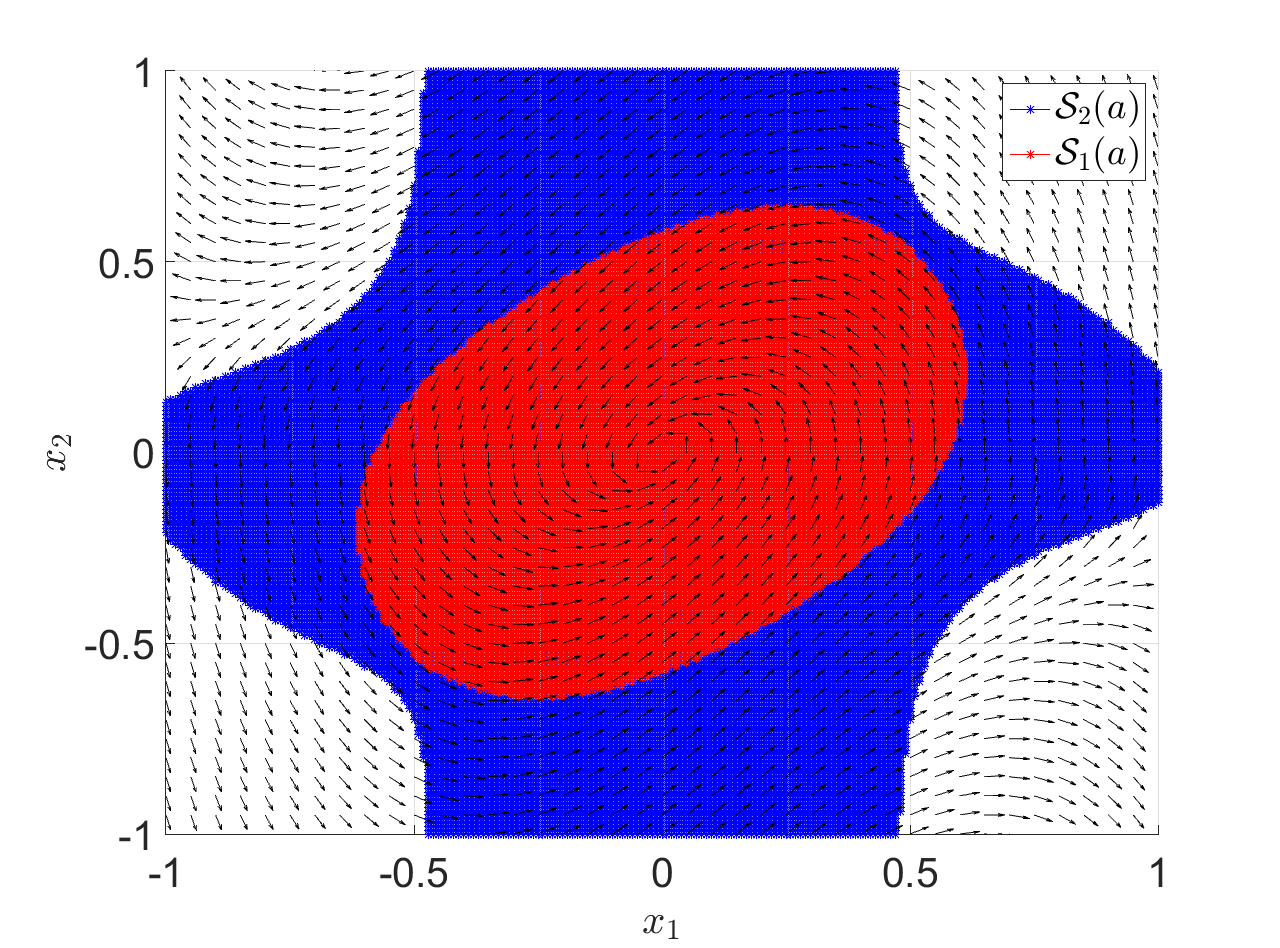}
	\caption{The sets $\mathcal{S}_1(a) = \{ x\in \chi :\ 0 \leq V(x,a) \leq 1 \} $ and $\mathcal{S}_2(a) = \{x\in \chi : \dot{V}(x,a) \leq - \epsilon_r \Vert x \Vert_2^2 \} $ for obtained $a$}
	\label{fig:7} 
\end{figure}

\begin{figure}[!h]
	\centering
	\includegraphics[scale=0.26]{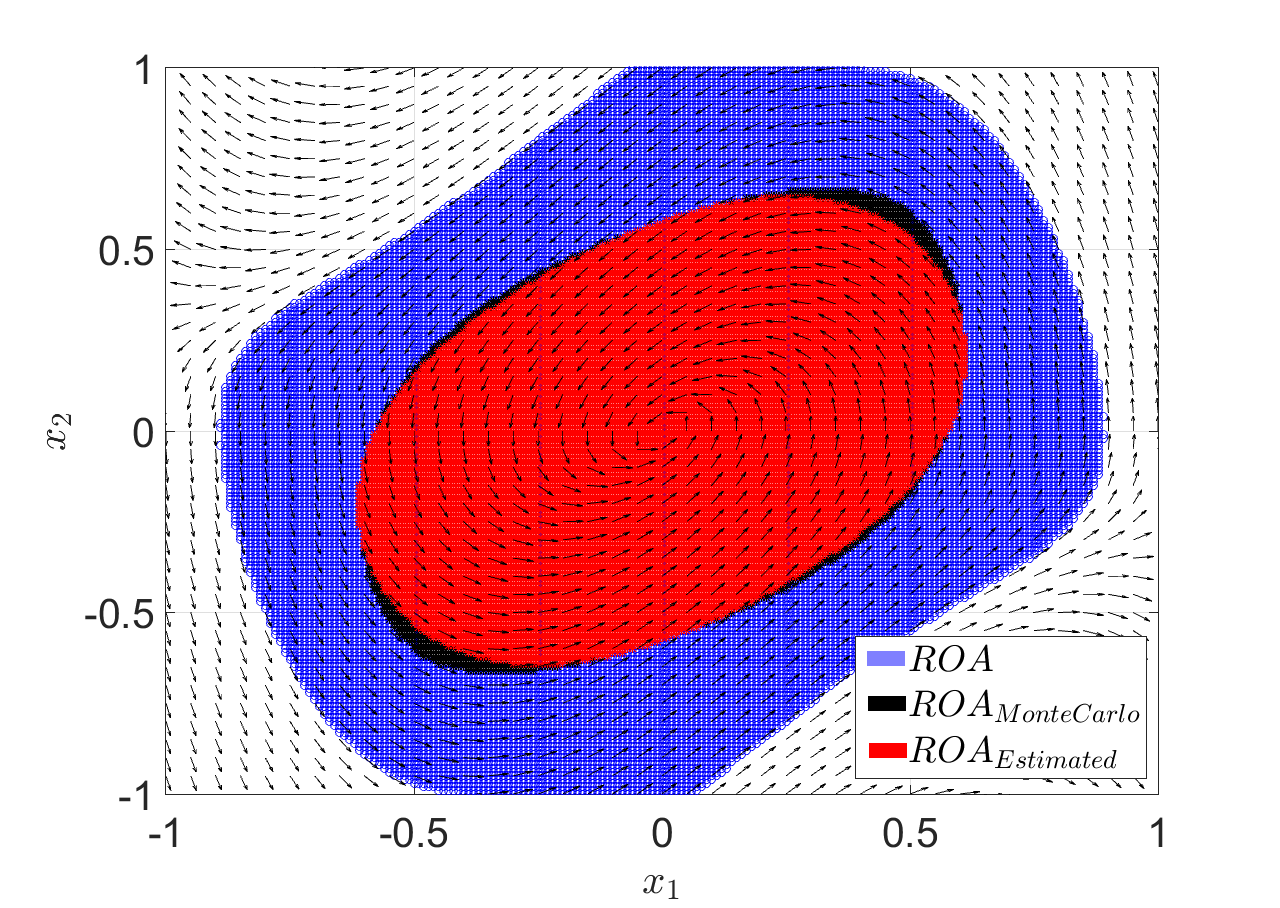}
	\caption{The true and estimated ROA sets}
	\label{fig:8} 
\end{figure}

To test the accuracy of the obtained results, we used Monte Carlo simulation. The obtained result for coefficients of provided Lyapunov function by Monte Carlo method are $(a^*_1,a^*_2,a^*_3)=(-0.6,0,-0.9)$. Figure \ref{fig:8} depicts the true ROA set for the system inside the unite box as well as obtained ROA using Monte Carlo method and convex approach provided in this work.

\subsection{Example 2: Probabilistic Control of Uncertain System}

Consider the uncertain nonlinear system as
\begin{equation}
	\label{eq:control_system}
	\begin{array}{r l}
		x_1(k+1)=&\delta x_2(k),\\
		x_2(k+1)=&x_1(k)~x_3(k),\\
		x_3(k+1)=&x_1(k)-x_2(k)+x_3(k)+u(k)
	\end{array}
\end{equation}
where, initial system states $x_1(0) \sim U[-1,1]$, $x_2(0) \sim U[-1,1]$, $x_3(0) \sim U[-1,1]$, and model parameter $\delta \sim U[-0.2, 0.2]$ are uncertain and uniformly distributed. Also, there is a sphere shaped obstacle centered at $(-0.5,-0.5,0)$ with radius of 0.3 in the state space.

The objective is to find the state feedback control of the form $u(k) = a_1x_1(k) + a_2x_2(k) + a_3x_3(k)$ to lead the states of the system to the cube centered at the origin with the edge length of 0.2 in at most 3 steps and at the same time to avoid the obstacle at each time $k$ with high probability. In other word, we want to maximize the probability of semialgebraic sets $\chi_3 =  \lbrace -0.1 \leq x_1(3) \leq 0.1, \ -0.1 \leq x_2(3) \leq 0.1, \ -0.1 \leq x_3(3) \leq 0.1 \rbrace$ and 
$\chi_{x_k} =  \lbrace (x_1(k)+0.5)^2 + (x_2(k)+0.5)^2 + x_3(k)^2 - 0.3^2 \geq 0 \rbrace, k=1,2 $. 

Hence, the set $\cS_1(a)$ reads as

\begin{footnotesize}
	\begin{equation}
		\mathcal{S}_1(a) = \left\lbrace  (x_0, \delta): \{-0.1 \leq \mathcal{P}_{x_i(3)} \leq 0.1 \}_{i=1}^3,\ \left\lbrace (\mathcal{P}_{x_1(k)}+0.5)^2 + (\mathcal{P}_{x_2(k)}+0.5)^2 + \mathcal{P}_{x_3(k)}^2 - 0.3^2 \geq 0 \right\rbrace _{k=1}^{2}  \right\rbrace 
	\end{equation}
\end{footnotesize}where, $\{x_1(k) = \mathcal{P}_{x_1(k)}(x_0, \delta, a)\}_{k=1}^3$, $\{x_2(k) = \mathcal{P}_{x_2(k)}(x_0, \delta, a)\}_{k=1}^3$, and $\{x_3(k) = \mathcal{P}_{x_3(k)}(x_0, \delta, a)\}_{k=1}^3$ are states of the system in terms of control coefficients vector $a$, initial states $x_0$, uncertain parameters $\delta$ that is derived by dynamic of the system given in \eqref{eq:control_system}. The maximum degree of polynomials defining set $\cS_1(a)$ is 8.

Using Monte Carlo method (see \cite{Jasour2015} Section 5.3.1), we obtain the optimal solution as $(a^*_1,a^*_2,a^*_3)=(-0.5,1,-1)$ and the corresponding optimal probability as 1. To obtain an approximate solution, we solve SDP in  in \eqref{P2M}. Note that since $\cS_2  = \chi$, we dont need to solve SDP in \eqref{P21d} to find polynomial $\cP^d_{\cA}(a)$ and hence the only constraints on coefficients $a$ are $\{1-a^2_i \geq 0\}_{i=1}^3$.

Based on moments of uniform measures, we construct the matrices in constraints of SDP \eqref{P2M} in terms of unknown moment vectors $\mathbf{y}\in\reals^{S_{7,2r}}$ and $ \mathbf{y_a}\in\reals^{S_{3,2r}}$. The SDP in \eqref{P2M} with $r=7$ is solved using GloptiPoly. Based on obtained solution for moment vectors, we approximate the $(a_1,a_2,a_3)$ with the first order moments of vector $y_a$ as $(y_{a_{100}},y_{a_{010}},y_{a_{001}}) = (-0.2820, 0.4766, -0.8602)$ and also we approximate the probability with zero moment of vector $y$ as $y_{0000000} = 1$.

Using Monte Carlo method, the true probability for obtained solution $(a_1,a_2,a_3)=(-0.2820, 0.4766, -0.8602)$ is computed as 0.95. To improve the estimated probability and also to estimate the probability of reaching to target set, i.e., $	\hbox{Prob} \left\lbrace  (x_0, \delta): \{-0.1 \leq \mathcal{P}_{x_i(3)} \leq 0.1 \}_{i=1}^3 \right\rbrace  $ and probability of avoiding the obstacle, i.e., $	\hbox{Prob} \left\lbrace  (x_0, \delta): \left\lbrace (\mathcal{P}_{x_1(k)}+0.5)^2 + (\mathcal{P}_{x_2(k)}+0.5)^2 + \mathcal{P}_{x_3(k)}^2 - 0.3^2 \geq 0 \right\rbrace _{k=1}^{2}  \right\rbrace $ separately, we solve the SDP (3.10) suggested in \cite{Jasour2015} for obtained points $(a_1,a_2,a_3)=(-0.2820, 0.4766, -0.8602)$. 
By solving the SDP with relaxation order of 7 for the set  $	\left\lbrace  (x_0, \delta): \{-0.1 \leq \mathcal{P}_{x_i(3)} \leq 0.1 \}_{i=1}^3 \right\rbrace  $, the estimated probability of 0.994 is obtained, while the true one computed by Monte Carlo is 0.95. Also, the estimated probabilty for the set $ \left\lbrace  (x_0, \delta): \left\lbrace (\mathcal{P}_{x_1(k)}+0.5)^2 + (\mathcal{P}_{x_2(k)}+0.5)^2 + \mathcal{P}_{x_3(k)}^2 - 0.3^2 \geq 0 \right\rbrace _{k=1}^{2}  \right\rbrace $ is obtained as 1, while the true one computed by Monte Carlo is 1. Figure \ref{fig:9} shows the trajectories of the uncertain system \eqref{eq:control_system} controlled by obtained state feedback $u(k) = -0.2820x_1(k) + 0.4766x_2(k) -0.8602x_3(k)$ for different initial points. Note that, although states of the system $x(k)$ avoids the obstacle, the trajectories between the points $x(k), k=0,...,3$ may collide the obstacle.\\

\begin{figure}[!h]
	\centering
	\includegraphics[scale=0.26]{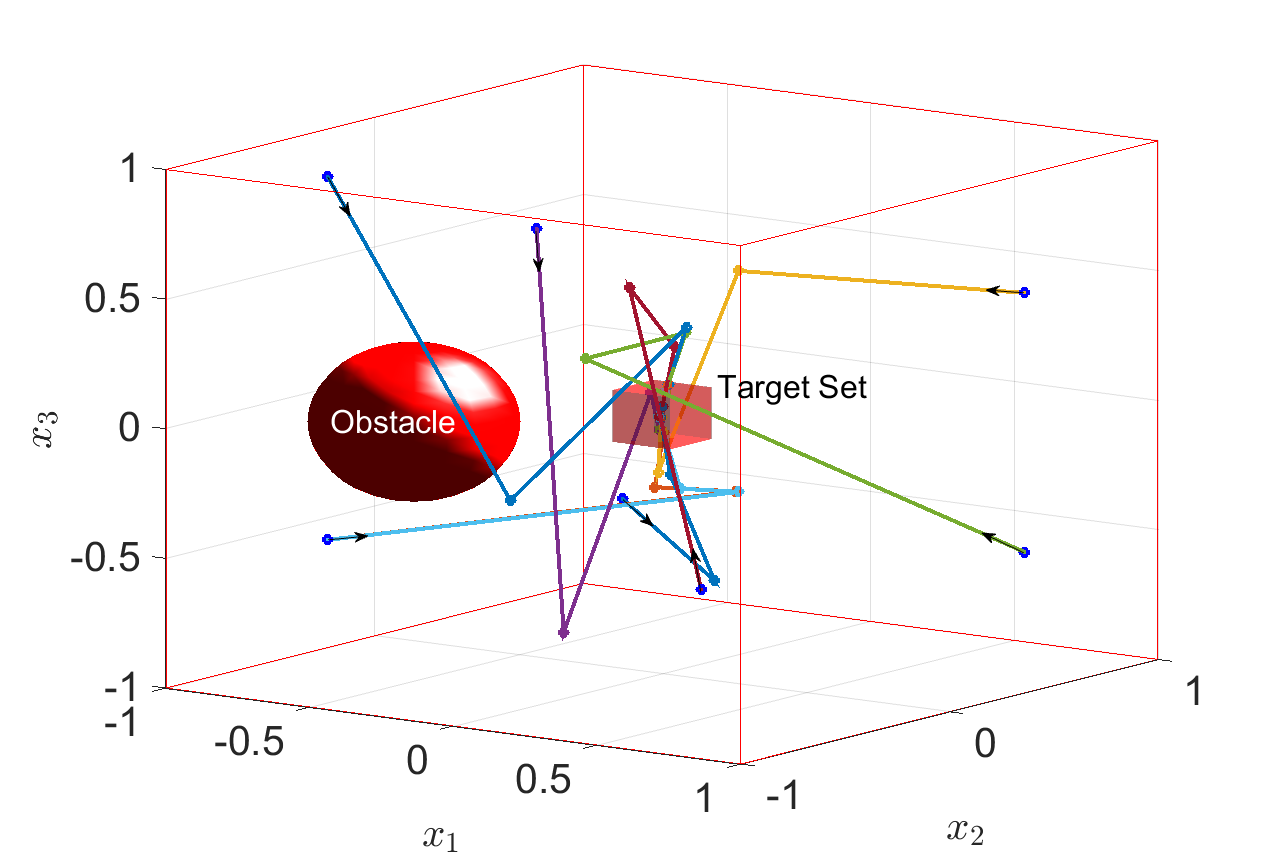}
	\caption{The trajectories of the uncertain system controlled by obtained state feedback}
	\label{fig:9} 
\end{figure}

\begin{remark}
	\label{Large}
	To be able to solve much larger size SDPs in \eqref{P2M}, a first-order augmented Lagrangian algorithm can be implemented, (\cite{Aybat2013, Aybat2014}). As we showed in \cite{Jasour2015}, this algorithm can handle much larger size SDPs than interior point methods can deal with. Also, recently SDSOS optimization technique is introduced to solve large-scale SOS problems as in SDP \eqref{PD2F} and \eqref{P21d}, which results in second order cone programs, see (\cite{Ahmadi2014, Majumdar2014a, Ahmadi2016}) for more information and regarding software. Also, in this paper all polynomials are expanded in the usual monomial basis, and the SDPs are therefore formulated as optimization problems over ordinary monomial moments. However, one can improve the numerical performance by using an orthogonal basis. See (\cite{Jasour2015}, section 3.5), where we employ orthogonal Chebyshev polynomials and redefine moment and localization matrices under the orthogonal basis.
\end{remark}


\section{Conclusion}\label{sec:Con}

In this paper, “constrained volume optimization” problems are introduced, where one aims at maximizing the volume of a set defined by polynomial inequalities such that it is contained in other semialgebraic set. We showed that many nonconvex problems in system and control can be reformulated as constrained volume optimization problems. To be able to obtain a equivalent convex problem, the results from theory of measure and moments as well as duality theory are used. Sequence of semidefinite relaxations is provided whose sequence of optimal values is shown to converge to the optimal value of the original problem. Numerical examples are provided that show that one can obtain reasonable approximations to the optimal solution.



\section{Appendix A: Proof of Theorem \ref{Theo 20}}\label{Appen_Theo20}
Let $\mathcal{A}_{\mathcal{F}}$ be the set of all parameters $a \in \mathcal{A}$ for which the set $\mathcal{S}_1(a)$ is a subset of the set $ \mathcal{S}_2(a)$.
Then, consider the following problem over the measures $\mu_a$ 
\begin{small}
	\begin{align}\label{eq:aux0_problem}
		\mathbf{P_{\mu_a}}:=& \sup_{\mu_a\in\cM_+(\mathcal{A}_{\mathcal{F}})}\left\{ \int_{\mathcal{A}} \mbox{vol}_{\mu_x} (\mathcal S_1(a))~d \mu_a:  \mu_a(\mathcal{A_{\mathcal{F}}})=1 \right\} 
	\end{align}
\end{small}

We first want to show that $\mathbf{P_{vol}^*} = \mathbf{P_{\mu_a}}$. Let $\mu_a$ be a feasible solution to \eqref{eq:aux0_problem}. Since, $\mbox{vol}_{\mu_x}(\mathcal S_1(a)) \le \mathbf{P_{vol}^*}$ for all $a \in \mathcal{A}$, we have $\int_{\mathcal{A}} \mbox{vol}_{\mu_x}(\mathcal S_1(a)) ~d\mu_a \le \mathbf{P_{vol}^*}$. Thus, $\mathbf{P_{\mu_a}} \le \mathbf{P_{vol}^*}$. Conversely, let $a\in\mathcal{A}$ be a feasible solution to the problem in \eqref{intro_P1}; hence, $a$ belongs to the set $\mathcal{A}_{\mathcal{F}}$. Let $\delta_{a}$ denotes the Dirac measure at $a$. The objective value of $a$ in \eqref{intro_P1} is equal to $\mbox{vol}_{\mu_x} (\mathcal S_1(a))$. 
Moreover, $\mu_a = \delta_{a}$ is a feasible solution to the problem in \eqref{eq:aux0_problem} with objective value equal to $\mbox{vol}_{\mu_x} (\mathcal S_1(a))$. This implies that $\mathbf{P_{vol}^*} \le \mathbf{P_{\mu_a}}$. Hence, $\mathbf{P_{vol}^*} = \mathbf{P_{\mu_a}}$, and \eqref{eq:aux0_problem} can be rewritten as
\begin{align} \label{max_p20_proof_p2_2}
	\mathbf{P^*_{vol}}=\sup_{\mu_a\in\cM_+(\mathcal{A}_{\mathcal{F}})}\left\{\int_{\mathcal{A}} \int_{\mathcal{S}_1(a)} d\mu_x d\mu_a:\ \mu_a(\mathcal{A}_{\mathcal{F}})=1\right\} \\
	= \sup_{\mu_a\in\cM_+(\mathcal{A}_{\mathcal{F}})}\left\{\int_{\mathcal K_1} d\mu_a \mu_x:\ \mu_a(\mathcal{A}_{\mathcal{F}})=1\right\}
\end{align}
and using the Lemma \ref{sec2:lem6}, we obtain 
\begin{align} \label{max_p20_proof_p2_3}
	\mathbf{P^*_{vol}}=\sup_{\mu_a\in\cM_+(\mathcal{A}_{\mathcal{F}}),\mu\in\cM_+(\cK_1)} \int d\mu  \\
	\quad \hbox{s.t.}\quad \mu \preccurlyeq \mu_a \times \mu_x,\ \mu_a(\mathcal{A}_{\mathcal{F}})=1. \subeqn
\end{align}

Now, consider two sets $A_1$ and $A_2$. The set $A_1$ is subset of the set $A_2$, if the set $A_1 \cap \bar{A_2}$ is an empty set. In the same way, for a given $a \in \mathcal{A}$, the set $\mathcal{S}_1(a)$ is subset of the set $\mathcal{S}_2(a)$, if the set $\{ x \in \chi:\  (x,a)$ $ \in \mathcal{K}_1 \cap \overline{\mathcal{K}_2} \}$ is an empty set. Hence, for any measure $\mu_a\in\cM_+(\mathcal{A}_{\mathcal{F}})$, we have $\mu_a \times \mu_x \in \cM_+(\overline{\mathcal K_1} \cup \mathcal K_2)$ considering that $\mu_x$ is supported on $\chi$. Therefore, in problem \eqref{max_p20_proof_p2_3} we can look for $\mu_a$ that is supported on $\cA$ such that measure $\mu_a \times \mu_x$ is supported on $\overline{\mathcal K_1} \cup \mathcal K_2$. This results in the following problem:
\begin{align} \label{max_p20_proof_p2_4}
	\mathbf{P^*_{vol}} =&\sup_{\mu_a\in\cM_+(\mathcal{A}),\mu\in\cM_+(\cK_1)} \int d\mu  \\
	\quad \hbox{s.t.}\quad & \mu \preccurlyeq \mu_a \times \mu_x,\ \mu_a(\mathcal{A})=1, \subeqn \\
	& \mu_a \times \mu_x \in \cM_+(\overline{\mathcal K_1} \cup \mathcal K_2). \subeqn
\end{align}
Therefore, $\mathbf{P_{vol}^*} = \mathbf{P_{measure}^*}$.

\section{Appendix B: Proof of Theorem \ref{Theo 1}}\label{Appen_Theo1}

We want to obtain the set $\mathcal{A}_{\mathcal{F}}$ in \eqref{AF}, set of all possible decision parameters $a \in \mathcal{A}$ for which $\mathcal{S}_1(a) \subseteq \mathcal{S}_2(a)$. The idea is to approximate the indicator function of the set $\mathcal{A}_{\mathcal{F}}$; i.e., $\mathcal{I}_{\mathcal{A}_\mathcal{F}}(a) = 1$ if $a \in \mathcal{A}_\mathcal{F}$ and 0 otherwise, with continuous functions (\cite{Henrion2009}, Section 3.2). There exist a sequence of functions $f_i \in \mathcal{C}[a]$ that converges from above to the indicator function of set $\mathcal{A}_{\mathcal{F}}$ as $i \rightarrow \infty$ (\cite{ASH1972}, Theorem A6.6, Urysohn’s Lemma A4.2), which results in an outer approximation of the set $\mathcal{A}_{\mathcal{F}}$ as $\mathcal{A}_{f_i} = \left\{ a\in\mathcal{A}:\  f_i(a) \geq 1 \right\} \supset \mathcal{A}_{\mathcal{F}}$.

To avoid the outer approximation and obtain the inner approximation of the set $\mathcal{A}_{\mathcal{F}}$ instead, we obtain the outer approximation of the complement set $\overline{\mathcal{A}_{\mathcal{F}}}$ by approximating its indicator function. Hence, if $f \in \mathcal{C}$ approximates the indicator function of $\overline{\mathcal{A}_{\mathcal{F}}}$ from above, the set $\mathcal{A}_{f} = \left\{ a\in\mathcal{A}:\  f(a) < 1 \right\} $ is an inner approximation of the set $\mathcal{A}_{\mathcal{F}}$.

To find such function $f(a)$, we use the LP in \eqref{P21}. For a given $a \in \mathcal{A}$, the set $\mathcal{S}_1(a)$ is subset of the set $\mathcal{S}_2(a)$, if the set $\left\{ x \in \chi:\  (x,a) \in \mathcal{K}_1 \cap \overline{\mathcal{K}_2} \right\}$ is an empty set. Hence, the set $\mathcal{A}_{\mathcal{F}}$ can be describe as $\mathcal{A}_{\mathcal{F}}= \{a\in \mathcal{A}:\ \nexists  x\in\chi \hbox{ s.t. } (x,a)\in\cK_1 \cap \overline{\cK_2} \} $. As a result, the complement set $\overline{\mathcal{A}_{\mathcal{F}}}$ read as 
\begin{equation}\label{AFN}
	\overline{\mathcal{A}_{\mathcal{F}}}= \{a\in \mathcal{A}:\ \exists  x\in\chi \hbox{ s.t. } (x,a)\in\cK_1 \cap \overline{\cK_2} \}
\end{equation}
Therefore, to approximate the indicator function of $\overline{\mathcal{A}_{\mathcal{F}}}$ in \eqref{AFN}, the continuous function $f(a)$ should be greater 1 over the set $\left\{ (x,a) \in \mathcal{K}_1 \cap \overline{\mathcal{K}_2} \right\}$ and 0 otherwise, as in \eqref{P21_1} and \eqref{P21_2}, the constrains of the LP. By minimizing the $L_1$-norm of $f(a)$ as in the objective function of \eqref{P21}, we converge to the indicator function of the set $\overline{\mathcal{A}_{\mathcal{F}}}$ from the above. Hence, $\mathcal{A}_{f_i}$ can arbitrarily approximate the set $\mathcal{A}_{\mathcal{F}}$ in \eqref{AF} and (i) and (ii) hold true.


\section{Appendix C: Proof of Theorem \ref{Theo 2}}\label{Appen_Theo2}

First, consider the following problem over the measures supported in the set  $\mathcal{A}_{\mathcal{F}}$ in \eqref{AF}.
\begin{small}
	\begin{equation}\label{eq:aux_problem}
		\mathbf{P_{\mu_a}}:=\sup_{\mu_a\in\cM_+(\mathcal{A}_{\mathcal{F}})}\left\{ \int_{\mathcal{A}} \mbox{vol}_{\mu_x} (\mathcal S_1(a))~d \mu_a:\ \mu_a(\mathcal{A}_{\mathcal{F}})=1\right\}
	\end{equation}
\end{small}
As in Appendix in \ref{Appen_Theo20}, we can show that $\mathbf{P_{vol}^*} = \mathbf{P_{\mu_a}}$ and Eq. \eqref{max_p2_proof_p2_3} is true.
\begin{align} \label{max_p2_proof_p2_3}
	\mathbf{P^*_{vol}}=\sup_{\mu_a\in\cM_+(\mathcal{A}_{\mathcal{F}}),\mu\in\cM_+(\cK_1)} \int d\mu  \\
	\quad \hbox{s.t.}\quad \mu \preccurlyeq \mu_a \times \mu_x,\ \mu_a(\mathcal{A}_{\mathcal{F}})=1. \subeqn
\end{align}

Now, we replace the set $\mathcal{A}_{\mathcal{F}}$ in problem \eqref{max_p2_proof_p2_3} with the set $\mathcal{A}_{f_i}$, where results in Problem \eqref{P2}. Hence, as the set $\mathcal{A}_{f_i} = \left\{ a\in\mathcal{A}:\  f_i(a) < 1 \right\}$ defined in Theorem \ref{Theo 1} converges to the set $\mathcal{A}_{\mathcal{F}}$, the optimal value $\mathbf{P_{f_i}^*}$ converges to the $\mathbf{P_{vol}^*}$ and measures $\mu^*_a(f_i)$ and $\mu^*(f_i)$ converge to $\mu_a = \delta_{a^*}$, Dirac measure at $a^*$, and $\mu = \delta_{a^*} \times \mu_x$, respectively. Also, since $\mathcal{A}_{f_i}$ is an inner approximation of the set $\mathcal{A}_{\mathcal{F}}$, any point $a_i$ in the support of measure $\mu^*_a(f_i)$ is also contained in the set $\mathcal{A}_{\mathcal{F}}$; Hence is an optimal solution to \eqref{intro_P1} and converges to the $a^*$.



\section{Appendix D: Proof of Theorem \ref{Theo 3}}\label{Appen_Theo3}

To drive a finite convex relaxation of the infinite LP problem in \eqref{P21}, we use finite order polynomial $\mathcal{P}^d_{\mathcal{A}}(a)$ to approximate the continuous function $f(a)$ and SOS relaxations to satisfy the constraints of the problem in \eqref{P21}, (\cite{Henrion2009} Section 3.3, \cite{Dabbene2015}). Based on Stone-Weierstrass Theorem \cite{ASH1972}, every continuous function can be uniformly approximated as closely as desired by a polynomial. Then, to make such polynomial $\mathcal{P}^d_{\mathcal{A}}$ to satisfy the constraints of the problem in \eqref{P21}, SOS relaxations are used as in Lemma \ref{sec2:lem7}. Constraints \eqref{P21d_1} implies that $\mathcal{P}^d_{\mathcal{A}}(a)-1$ belongs to the quadratic module generated by polynomials of set $\mathcal{K}_1 \cap \overline{\mathcal{K}_2}$\begin{footnotesize} $ =\left\lbrace (x,a): \cup_{i=1}^{o_2}\{ -\mathcal{P}_{2i} > 0 , \mathcal{P}_{1j}\geq 0, j=1,\dots ,o_1 \}  \right\rbrace$ \end{footnotesize}; hence, $\mathcal{P}^d_{\mathcal{A}}(a)-1$ is nonnegative on the set $\mathcal{K}_1 \cap \overline{\mathcal{K}_2}$. Also, constraint \eqref{P21d_2} implies that $\mathcal{P}^d_{\mathcal{A}}(a)$ belong to the quadratic module generated by polynomials describing the hyper cube $[-1, 1]^n\times[-1, 1]^m$, so $\mathcal{P}^d_{\mathcal{A}}(a)$ is nonnegative over the set $\chi \times \mathcal{A} = [-1, 1]^n\times[-1, 1]^m$. Therefore, by minimizing the $L_1$-norm of the $\mathcal{P}^d_{\mathcal{A}}$ similar to \eqref{P21}, and $d \rightarrow \infty$, $\mathcal{P}^d_{\mathcal{A}}$ converges to the indicator function of the set $\overline{\mathcal{A}_{\mathcal{F}}}$. Therefore, the set $\mathcal{A}_{{d}} = \left\{ a\in\mathcal{A}:\  \mathcal{P}^d_{\mathcal{A}}(a) < 1 \right\}$ converges to the set $\mathcal{A}_{\mathcal{F}}$ in \eqref{AF} as in the Theorem \ref{Theo 1}.

\section{Appendix E: Proof of Theorem \ref{Theo 5}}\label{Appen_Theo5}

The LP in \eqref{P2} can be rewritten as 
\begin{align}
	\mathbf{P_{1}^*}:=&\ \sup \langle \gamma , c \rangle \label{PD2}\\
	&\hbox{s.t.}\quad A^*\gamma = b \label{PD2_1}\subeqn\\
	&\gamma \in \mathcal{M}_+(\mathcal{K}_1)\times \mathcal{M}_+(\mathcal{A}_{f_i}) \label{PD2_2}.\subeqn
\end{align}
where, $\gamma := (\mu,\mu_a) \in \mathcal{M}_+(\mathcal{K}_1)\times \mathcal{M}_+(\mathcal{A}_{f_i})$ is the variable vector, and $c := (1,0) \in \mathcal{C}_+(\mathcal{K}_1) \times \mathcal{C}_+(\mathcal{A}_{f_i})$, so objective function is $\langle \gamma , c \rangle = \int d\mu$. Also, $A^*: \mathcal{M}_+(\mathcal{K}_1)\times \mathcal{M}_+(\mathcal{A}_{f_i}) \rightarrow \mathcal{M}_+(\chi \times \cA)\times \reals_+$ is the linear operator that is defined by $A^*\gamma := (\mu - \mu_a \times \mu_x, \int_{\mathcal{A}}d\mu_a )$ and $b := (0,1) \in \mathcal{M}_+(\chi \times \cA)\times \reals_+ $,(\cite{Henrion2014}, Theorem 2, \cite{anderson1987linear,A.Barvinok2002}).
The problem in \eqref{PD2} is infinite LP defined in cone of nonnegative measures. The cone of nonnegative continuous functions are dual to cone of nonnegative measures.
Based on standard results on LP the dual problem of \eqref{PD2} reads as
\begin{align}
	\mathbf{P_{2}^*}:=&\ \inf \langle b , z \rangle \label{PD3}\\
	&\hbox{s.t.}\quad Az - c \in \mathcal{C}_+(\mathcal{K}_1)\times \mathcal{C}_+(\mathcal{A}_{f_i})\label{PD3_1}\subeqn
\end{align}
where, $z := (\mathcal{W}(x,a),\beta) \in \mathcal{C}_+(\chi \times \cA) \times \reals_+$ is the variable vector, so the objective function is $\langle b , z \rangle = \beta$. The linear operator $A: \mathcal{C}_+(\chi \times \cA) \times \reals_+ \rightarrow \mathcal{C}_+(\mathcal{K}_1) \times \mathcal{C}_+(\mathcal{A}_{f_i})$ satisfies adjoint relation$\langle A^*\gamma,z \rangle = \langle \gamma, Az \rangle $; hence, is defined by $Az := (\mathcal{W}(x,a), \beta - \int_{\mathcal{A}} \mathcal{W}(x,a) d\mu_x )$. As a result, the dual problem \eqref{PD3} is equal to the problem \eqref{PD}.

If problem in \eqref{PD2} is consistent with finite value and the set $$D := \left\lbrace  (A^*\gamma,\langle \gamma,c\rangle) : \gamma \in \mathcal{M}_+(\mathcal{K}_1)\times \mathcal{M}_+(\mathcal{A}_{f_i}) \right\rbrace $$ is closed, then there is no duality gap between \eqref{PD2} and \eqref{PD3}. The support of measures in \eqref{PD2} are compact. Also, the measure $\mu$ is constrained by the measure $\mu_a \times \mu_x$ in which, measure $\mu_a$ is probability measure; i.e., $\mu_a(\mathcal{A}_{f_i})=1$, and $\mu_x$ is finite Borel measure defined on compact set $\chi$. Hence, $\mathbf{P^*_1} = \sup \int d\mu < \infty$. Also, the feasible set of \eqref{PD2} is nonempty for instance $(\delta_a\times\mu_x,\delta_a)$ for $a \in \mathcal{A}_{f_i}$ is a feasible solution; Therefor $0 \leq \mathbf{P^*_1} = \sup \int d\mu < \infty$.
Using sequential Banach$ - $Alaoglu theorem \cite{ASH1972} and weak-$\star$ continuity of the $A^*$, there exist an accumulation point of $\gamma_k = (\mu_k,{\mu_a}_k)$ in the weak-$\star$ topology of nonnegative measures such that $\hbox{lim}_{k \rightarrow \infty} \left(  (A^*\gamma_k,\langle \gamma_k,c\rangle) \right) \in D$; hence, $D$ is closed, (\cite{Henrion2014}, Theorem 2).
\section{Appendix F: Proof of Theorem \ref{Theo 6}}\label{Appen_Theo6}
For simplicity, we denote the polynomials $\left(\{1-\mathcal{P}^d_{\mathcal{A}}\},\{(1-a^2_i)\}_{i=1}^m \right)$ that construct the set $\cA_d$ by $\{\mathcal{P}_{\mathcal{A}j}(a)\}_{j=1}^{o_a}$. Matrices of the problem \eqref{P2M} can be rewritten as follow, (\cite{Henrion2009}). {\small $M_r(\mathbf y)= \sum_{\alpha}A_{\alpha}y_{\alpha}$} and {\small $M_{r-r_j}(\mathbf{y} ; \mathcal{P}_{1j})=\sum_{\alpha}B^j_{\alpha}y_{\alpha}$}.
Also, {\small $M_r ({\mathbf y}_{\mathbf a})=\sum_{\alpha}D_{\alpha}{y_a}_{\alpha}$}, {\small $M_{r-r_a}(\mathbf{y}_{\mathbf a}; \mathcal{P}_{\mathcal{A}j}(a))=\sum_{\alpha}E^j_{\alpha}{y_a}_{\alpha}$}, and {\small $M_r (\mathbf{y_a}\times\mathbf{y_x}-{\mathbf y})=\sum_{\alpha}F_{\alpha}{y_a}_{\alpha}- \sum_{\alpha}A_{\alpha}y_{\alpha}$} for appropriate real symmetric matrices $(A_{\alpha},\{B^j_{\alpha}\}_{j=1}^{o_1},D_{\alpha},\{E^j_{\alpha}\}_{j=1}^{o_a},F_{\alpha})$ and $0 \leq |\alpha| \leq 2r$. Let, $\gamma=(\mathbf{y}\in\reals^{S_{n+m,2r}},\ \mathbf{y_a}\in\reals^{S_{m,2r}})$. Then problem in \eqref{P2M} can be rewritten as a standard form as follow:
\begin{small}
	\begin{align}
		\mathbf{P^*_r}:= &\sup_{\gamma} b^T\gamma,
		\label{PD3F}\\
		\hbox{s.t.}\quad & C_1 + \sum_{\alpha}\hat{A}_{\alpha}\gamma_{\alpha} \succcurlyeq 0, \label{PD3F_1}\subeqn\\
		& C^j_2 + \sum_{\alpha}\hat{B}^j_{\alpha}\gamma_{\alpha} \succcurlyeq 0,\ \  j=1,\dots,o_1 \label{PD3F_2}\subeqn\\
		& C_3 - \sum_{\alpha}\hat{C}_{\alpha}\gamma_{\alpha} \succcurlyeq 0, \label{PD3F_33}\subeqn\\
		& C_4 + \sum_{\alpha}\hat{D}_{\alpha}\gamma_{\alpha} \succcurlyeq 0, \label{PD3F_3}\subeqn\\
		& C^j_5 + \sum_{\alpha}\hat{E}^j_{\alpha}\gamma_{\alpha}\succcurlyeq 0, \ \ j=1,\dots,o_a \label{PD3F_4}\subeqn\\
		& C_6 + \sum_{\alpha}\hat{F}_{\alpha}\gamma_{\alpha}\succcurlyeq 0, \label{PD3F_5}\subeqn
	\end{align}
\end{small}where, $b = (1,\textbf{0}) \in \reals^{S_{n+m,2r}+S_{m,2r}}$, $(C_1,C_2,C_4,C_5,C_6)$ are zero matrices, $(\hat{A}_{\alpha},\{\hat{B}^j_{\alpha}\}_{j=1}^{o_1}$ $,\hat{D}_{\alpha},$ $\{\hat{E}^j_{\alpha}\}_{j=1}^{o_a},\hat{F}_{\alpha})$ are real symmetric matrices, $C_3=1$, and $\hat{C}^T=(\textbf{0} \in \reals^{S_{n+m,2r}},1,\textbf{0} \in \reals^{S_{m,2r}-1}) \in \reals^{S_{n+m,2r}+S_{m,2r}}$.
Based on standard results on duality of SDP, the dual problem to \eqref{PD3F} reads as
\begin{footnotesize}
	\begin{align}
		\mathbf{P^*_d}:= &\inf_{\{X^j\}_{j=0}^{o_1},\{Y^j\}_{j=0}^{o_a},Z,\beta} \left\langle  C_1,X^0 \right\rangle
		+ \sum_{j=1}^{o_1} \left\langle  C^j_2,X^j \right\rangle + \left\langle  C_3,\beta \right\rangle +  \left\langle  C_4,Y^0 \right\rangle + \sum_{j=1}^{o_a} \left\langle  C^j_5,Y^j \right\rangle +  \left\langle  C_6,Z \right\rangle \label{PD4F}\\
		\hbox{s.t.}\quad & \beta - \left\langle A_{\alpha},X^0 \right\rangle - \sum_{j}^{o_1} \left\langle B^j_{\alpha},X^j \right\rangle - \left\langle D_{\alpha},Y^0 \right\rangle - \sum_{j}^{o_a} \left\langle E^j_{\alpha},Y^j \right\rangle- \left\langle F_{\alpha},Z \right\rangle = b_{\alpha},\ \ \alpha = 0,  \label{PD4F_1}\subeqn\\
		& - \left\langle A_{\alpha},X^0 \right\rangle - \sum_{j}^{o_1} \left\langle B^j_{\alpha},X^j \right\rangle - \left\langle D_{\alpha},Y^0 \right\rangle - \sum_{j}^{o_a} \left\langle E^j_{\alpha},Y^j \right\rangle- \left\langle F_{\alpha},Z \right\rangle = b_{\alpha},\ \ 0 < |\alpha| \leq 2r,  \label{PD4F_11}\subeqn\\
		& X^0,\{X^j\}_{j=1}^{o_1},Y^0,\{Y^j\}_{j=1}^{o_a},Z, \beta \succcurlyeq 0 \label{PD4F_2}\subeqn
	\end{align}
\end{footnotesize}where, $\left\langle X,Y \right\rangle = \hbox{trace}(XY)$. This problem is equal to the problem in \eqref{PD2F}. Based on the defined matrices and vectors, the cost function of \eqref{PD4F} is equal to $\beta$. Also, let $\cB_d$ denote the vector comprised of the monomial basis of $\mathbb R_{\rm d}[a,x]$. We can represent the polynomials of \eqref{PD2F} as  
$\mathcal{P}^d_{\mathcal{W}}(x,a) = \cB_d^T X^0 \cB_d$, $\mathcal{QM} \left( \{\mathcal{P}_{1j}\}_{j=1}^{o_1} \right) = \sum_{j}^{o_1} \cB_d^T X^j \cB_d$, $\int \mathcal{P}^d_{\mathcal{W}}(x,a) d\mu_x = \cB_d^T Y^0 \cB_d$, $\mathcal{QM} \left(\{\mathcal{P}_{\mathcal{A}j}\}_{j=1}^{o_a} \right) = \sum_{j}^{o_a} \cB_d^T Y^j \cB_d$, and $\hat{\mathcal{P}}^d_{\mathcal{W}}(x,a)$ $= \cB_d^T Z \cB_d$. Then constraints \eqref{PD4F_1} and \eqref{PD4F_11} are conditions for $\alpha$-th coefficient of polynomial $\mathcal{P}^d_{\mathcal{W}}(x,a)$ so that as constraints \eqref{PD2F_1} and \eqref{PD2F_2}, $\mathcal{P}^d_{\mathcal{W}}(x,a)-1 \in \mathcal{QM} \left( \{\mathcal{P}_{1j}\}_{j=1}^{o_1} \right)$, $\beta - \hat{\mathcal{P}}^d_{\mathcal{W}}(x,a)  \in \mathcal{QM} \left(\{\mathcal{P}_{\mathcal{A}j}\}_{j=1}^{o_a} \right)$, and $\hat{\mathcal{P}}^d_{\mathcal{W}}(x,a) = \int_\chi \mathcal{P}^d_{\mathcal{W}}(x,a) d\mu_x$ are satisfied. 

Based om Slater's sufficient condition,if the feasible set of strictly positive matrices in constraint of primal SDP is nonempty, then there is no duality gap. 
Consider SDP in \eqref{P2M}. Let $\mu_a$ uniform measure on $\mathcal{A}_d$ and $\mu = \mu_a \times \mu_x$. Since set $\cK_1$ and $\mathcal{A}_d$ have a nonempty interior, then $M_r(\mathbf y)\succ 0$, $M_{r-r_j}(\mathbf{y}; \mathcal{P}_{1j}) \succ 0, j=1,\dots ,o_1 $, $ M_r ({\mathbf y}_{\mathbf a}) \succ 0$, and $M_{r-r_a}(\mathbf{y}_{\mathbf a}; \mathcal{P}_{aj}) \succ 0, j=1,\dots ,o_a$. Based on Assumption \ref{Assum}, $\chi \times \mathcal{A} \setminus \cK_1$ has nonempty interior; hence $M_r (\mathbf{y_a}\times\mathbf{y_x}-{\mathbf y}) \succ 0$. Therefore, Slater's condition holds, (see \cite{Henrion2009} for similar setup). 

	\bibliographystyle{ieeetr}
	
\bibliography{AshkanCollection}

\end{document}